\newcommand{\norm}[1]{{\left\|{#1}\right\|}}   \newcommand{\scal}[1]{{\left\langle{#1}\right\rangle}}
\newcommand{\Z}{\mathbb Z} 
  \newcommand{\bw}{\overline{w}}
\theoremstyle{plain}
\newtheorem{theorem}{Theorem}[section]
\newtheorem{lemma}[theorem]{Lemma}
\newtheorem{corollary}[theorem]{Corollary}
\newtheorem{proposition}[theorem]{Proposition}
\theoremstyle{definition}
\newtheorem{definition}[theorem]{Definition}
\theoremstyle{remark}
\newtheorem{remark}[theorem]{Remark}
\newcommand{\C}{\mathbb C} 
\newcommand{\R}{\mathbb R}
\newcommand{\D}{\mathbb{D}}
\newcommand{\bz}{\overline{z}} 
\numberwithin{equation}{section}
\makeatletter\@addtoreset{equation}{section}
\begin{document}
	
	
	\title{Fractional Zernike functions}
	
	\author{
		\name{Hajar Dkhissi 
			\textsuperscript{a},   Allal Ghanmi	\textsuperscript{a} and  Safa Snoun\textsuperscript{b}\thanks{CONTACT A. Ghanmi. Emails: allal.ghanmi@um5.ac.ma/ag@fsr.ac.ma}}
		\affil{\textsuperscript{a}	
			Analysis, P.D.E $\&$ Spectral Geometry, Lab M.I.A.-S.I., CeReMAR, \newline
			Department of Mathematics,
			P.O. Box 1014,  Faculty of Sciences, \newline
			Mohammed V University in Rabat, Morocco}
		\affil{\textsuperscript{b}  
			Department  of Mathematics, Faculty of Sciences, \newline University of Gabes 6072, Tunisia}
	}

	\maketitle
	
	\begin{abstract}
		We consider and provide an accurate study for the fractional Zernike functions on the punctured unit disc, generalizing the classical Zernike polynomials and their associated $\beta$-restricted Zernike functions. Mainly, we give the spectral realization of the latter ones  and show that they are orthogonal $L^2$-eigenfunctions for certain perturbed magnetic (hyperbolic) Laplacian.		
		The algebraic and analytic properties for the fractional Zernike functions to be established include the connection to special functions, their zeros, their orthogonality property,  as well as the differential equations, recurrence and operational formulas they satisfy. Integral representations are also obtained. Their regularity as poly-meromorphic functions is discussed and their generating functions including a bilinear one of "Hardy--Hille type" are derived.  Moreover, we prove that a truncated subclass defines a complete orthogonal system in the underlying Hilbert space giving rise to a specific Hilbertian orthogonal decomposition in terms of a second class of generalized Bergman spaces.  	
	\end{abstract}
	
	\begin{keywords}
		Zernike  polynomials; $\beta$-restricted Zernike functions; fractional Zernike functions; $\beta$-weighted  Poly-Bergman spaces; Zeros set; Poly-meromorphy; Generating functions; Orthogonality; Completeness.
	\end{keywords}


\tableofcontents

\section{Introduction}

The classical real Zernike polynomials 
are introduced  in the framework of optical problems,  especially in order to analyze the figure of a circular mirror. In Zernike"s paper on the knife edge test and the phase contrast method \cite{Zernike34}, they are defined as eigenfunctions of a rotational invariant second order
partial differential equation.
Next, they have been used in the Nijboer's works to develop the diffraction theory of optical aberrations. Since then, they have been extensively employed to express the propagation of a wavefront data in optical tests through imaging system \cite{GrayDunn2012,IskanderCollinsDavis2001,LakshminarayananFleck2011},  and to represent the aberrations of optical systems (by atmospheric turbulence) \cite{Noll1976,Winker1991}. They are also used to study diffraction problems in the rotationally symmetric system with circular pupils \cite{Nijboer1947,ZernikeBrinkman35} and pattern recognition \cite{KhotanzadHong1990,WangHealey1998}. 
More recently, they are applied efficiently 
to characterize the shape of any portion of molecular surfaces and to evaluate the shape complementarity of protein-protein interfaces \cite{MilanettiMiottoDiRienzoMontiGostiRuocco2021}.

A generalized complex version (called Zernike or disc polynomials) defines them 
as the orthogonal ones on the unit disc $\mathbb{D}=\{z\in \C; \, |z|<1\}$ with finite values at the boundary. They are given by the Rodrigues type formula
\begin{align}\label{DiscePol}
	\mathcal{Z}^{\gamma}_{m,n}(z,\bz) 
	&:=  		(-1)^{m+n}   (1-|z|^2)^{-\gamma} 
	\frac{\partial^{m+n}}{\partial z^m \bz^n} \left( 1-|z|^2\right)^{\gamma+m+n}  
\end{align}
for varying nonnegative integers $m,n$, and real $\gamma>-1$.
This definition agrees with the one provided by Koornwinder \cite{Koornwinder75
} as well as the one considered by Dunkl \cite{
	Dunkl84}.
%
Algebraic and analytic properties of $ \displaystyle \mathcal{Z}_{m,n}^\gamma (z,\bar z)$ have been discussed in many papers \cite{Aharmim2015,Koornwinder75,
	Wunsch05}.
The corresponding Wiener and Paley type theorems have been obtained by Kanjin in \cite{Kanjin2013}.
Recently, they have shown to be useful in the concrete description of spectral properties of different types of Cauchy transforms  \cite{
	ElHElkGh21,ElHGhIn2015Arxiv}.  

In the present paper, we consider a specific generalization of the Zernike polynomials in \eqref{DiscePol}. Namely, we deal with the family of functions 
\begin{align}\label{betZpol}
	\mathcal{Z}^{\kappa,\rho}_{m,n}(z,\bz) 
	&:=  		(-1)^m z^{-\rho} (1-|z|^2)^{-\kappa} 
	\frac{\partial^m}{\partial z^m}  \left(  z^{n+\rho} (1-|z|^2)^{\kappa+m}\right) 
\end{align}
on the punctured unit disc $\mathbb{D}^*=\mathbb{D} \setminus\{0\}$, 
for fixed real numbers  $\rho,\kappa>-1$ and varying integers $m$ and $n$ such that $m\geq 0$ and  $n+\rho \geq 0$.
Thus, for arbitrary nonnegative integer $\rho$, 
they reduce further to the Zernike polynomials in \eqref{DiscePol} since  for every $\ell =0,1, \cdots$
we have
\begin{align}\label{frZcZ} z^{\ell}\mathcal{Z}^{\kappa,\ell}_{m,n}(z,\bz)&=   \frac{\mathcal{Z}_{m,n+\ell}^\kappa (z,\bz)}{(\kappa+m+1)_{n+\ell}}  .
\end{align}
Otherwise,
they are no longer polynomials. Their study for arbitrary $\rho$ can be reduced to the subclass  corresponding to $0 \leq \rho <1$.
More precisely, we have
$$ \mathcal{Z}^{\kappa,\rho}_{m,n}(z,\bz) = 
z^{-[\rho]} 	\mathcal{Z}^{\kappa,\widetilde{\rho}}_{m,n+ [\rho]}(z,\bz) 
$$
whenever $ n+[\rho]\geq 0 $, where $[\rho]$ denotes the integer part of $\rho$ and
$0 \leq \widetilde{\rho}=\rho-  [\rho] <1$. 
This justifies somehow the following definition which can also be justified from being poly-meromorphic (see Theorem \ref{thmAnaMeremor}).

\begin{definition} The functions  $\mathcal{Z}^{\kappa,\rho}_{m,n}(z,\bz)$ in \eqref{betZpol} are referred to as fractional Zernike functions.
\end{definition}

Contrary to the classical Zernike polynomials 
satisfying  the symmetry relationships $\overline{\mathcal{Z}^{\kappa}_{m,n}(z,\bz)}  = \mathcal{Z}^{\kappa }_{m,n}(\bz,z) = 	\mathcal{Z}^{\kappa }_{n,m}(z,\bz),$
which play a crucial rule in their study, this relation is no longer valid for the fractional Zernike functions $\mathcal{Z}^{\kappa,\rho}_{m,n}(z,\bz)$ even if $\rho$ is a positive integer. In fact, we have only $\overline{\mathcal{Z}^{\kappa,\rho}_{m,n}(z,\bz)}= \mathcal{Z}^{\kappa,\rho}_{m,n}(\bz,z) 
$ for arbitrary $\rho$ and one gets from \eqref{frZcZ} the identity
\begin{align}\label{Sym}
	\overline{\mathcal{Z}^{\kappa,\rho}_{m,n}(z,\bz)}
	=   \frac{(\kappa+1)_m}{(\kappa+1)_{n+\rho}} z^\rho \bz^{-\rho}  \mathcal{Z}^{\kappa,\rho}_{n+\rho,m-\rho}(z,\bz)
\end{align} 
valid for $\rho$ being a nonnegative integer.
This reveals in particular that the  analytic and spectral properties of the functions $\mathcal{Z}^{\kappa,\rho}_{m,n}(z,\bz)$ 
can not be directly recovered from the Zernike polynomials, and the relevant properties
may be completely different from the classical ones, essentially when  $\rho$ is non integer. Thus, a concrete description of their algebraic and analytic properties  for fixed reals $\rho,\kappa>-1$ is desirable. 

To this purpose  we begin by considering the so-called $\beta$-restricted Zernike functions  $\psi^{\gamma,\eta}_{m,n}$. They are shown to be a special class of polyanalytic excited states in the weighted Hilbert space 
$	L^{2,\alpha}_\beta(\mathbb{D})=L^{2}(\mathbb{D},d\mu_{\alpha,\beta})
$ 
 of all complex-valued functions that are square integrable  with respect to the positive measure \begin{align}\label{measure}
 	d\mu_{\alpha,\beta}(z):=(1-|z|^2)^{\alpha} |z|^{2\beta}dxdy; \quad    z=x+iy, \, \, \alpha,\beta >-1.\end{align}
The main results concerning the functions $\psi^{\gamma,\eta}_{m,n} $ are summarized in Theorem \ref{Mth3}. Namely, we prove that they form an orthogonal system of eigenfunctions in $L^{2,\alpha}_\beta(\mathbb{D})$  for a perturbed magnetic Laplacian, which is essentially the classical magnetic Schr\"odinger operator on the hyperbolic disc perturbed by a  {particular potential (with zero magnetic field) modeling the Aharonov--Bohm effect (see Remark \ref{rempotential})}. 
Moreover, the $L^{2,\alpha}_\beta$-eigenspace of the considered Laplacian  
associated with its lowest eigenvalue is shown to be the $\beta$-modified Bergman space $\mathcal{A}^{2,\alpha}_{\beta}(\mathbb{D})$ on the punctured unit disc $\mathbb{D}^* $ recently introduced and studied in \cite{GhanmiSnoun2021,GhiloufiSnoun2020
}.
The other $L^{2,\alpha}_\beta$-eigenspaces associated with the hyperbolic Landau levels for the considered Laplacian can be seen as the polyanalytic analogs of $\mathcal{A}^{2,\alpha}_{\beta}(\mathbb{D})$ 
(see Remark \ref{rembetaBerg}).

The motivation of considering $\psi^{\gamma,\eta}_{m,n}$ is that they can be seen as the spectral side of fractional Zernike functions. For  special values of $\gamma$ and $\eta$ they  are closely connected to by
\begin{align} \label{Zernike1}
	\mathcal{Z}^{\kappa_m,\rho}_{m,n}(z,\bz)	&= 	 |z|^{2\eta} (1-|z|^2)^{\frac{\alpha+1-\kappa_m}{2}}  	\psi^{\gamma,\eta}_{m,n}(z,\bz) 
	\end{align}
for $m,n \geq 0$ with $\rho = \beta-2\eta$ and for $\kappa$ depending in $m$ and given by $\kappa=\kappa_m = \alpha-2(\gamma+m)-1$.
However, this last fact can not be employed to recover the global properties of the fractional Zernike functions   $\mathcal{Z}^{\kappa,\rho}_{m,n}(z,\bz)$. Only the local ones for every fixed $m$, $n$ and $\rho$ with the specific $\kappa=\kappa_m$ can be derived.

 For the concrete study of $\mathcal{Z}^{\kappa,\rho}_{m,n}(z,\bz)$ we begin by establishing 
  their explicit expressions, their different hypergeometric representations, their expression in terms of the Jacobi polynomials as well as their connection to the complex Zernike polynomials in \eqref{DiscePol}.
 Subsequently, the zero sets of $\mathcal{Z}^{\kappa,\rho}_{m,n}(z,\bz)$ are described (Corollary \ref{corzeros}) and 
 shown to be the centered circles of radii being the zeros of the real Jacobi polynomials.
The orthogonality in the Hilbert space $L^{2,\kappa}_{\rho}(\D)= L^{2}(\mathbb{D},d\mu_{\kappa,\rho}$) is discussed and the square norm is explicitly computed. 
The membership to a specific class of poly-meromorphic functions in $\D$ is also considered (Theorem \ref{thmAnaMeremor}). 
 Moreover,  we investigate the operational formulas they satisfy including those of Burchnall type and discuss some recurrence relations, the differential equations they obey (Theorems \ref{thmDiffeqs} and \ref{thmDiffeq}) and so on. 
Certain associated generating functions are obtained such a bilinear generating function analogous to the one  Hardy--Hille generating function for the generalized Laguerre polynomials.
The latter one can be employed to derive special 
 integral representation for $\mathcal{Z}^{\kappa,\rho}_{m,n}(z,\bz)$. Another  integral representation of Cauchy-type is obtained as a special integral on the unit circle.  
%
Finally, we show in Theorem \ref{Mth5}  that the truncated fractional Zernike functions 
\begin{align} \label{trabbasis} \varUpsilon_{m,s}^{\kappa,\rho} (z,\bz) := z^s|z|^{-s}\mathcal{Z}^{\kappa,\rho}_{m,m}(z,\bz), \, s\in \Z,  \,  m=0,1,\cdots,
\end{align}
 constitute an orthogonal basis of the Hilbert space $L^{2,\kappa}_{\rho}(\D)$. 
Accordingly, we define a second class of poly-meromorphic Bergman spaces leading to a complete microlocal orthogonal decomposition of the underlying Hilbert space $L^{2,\kappa}_\rho(\D)$.  
The obtained results will contribute efficiently in the study of the associated isometric integral transforms (of Bargmann type) on the configuration space on the positive half real line. 

The remaining sections are organized as follows. Section 2 deals with the spectral realization of the $\beta$-modified functions $	\psi^{\gamma,\eta}_{m,n}$ by Schr\"odinger's factorization method.   
A 
proof of $\psi^{\gamma,\eta}_{m,n}$ form an orthogonal system of eigenfuctions in $L^{2,\alpha}_\beta(\mathbb{D})$ is also presented in this section.
 The basic properties of the fractional functions as described above are  stated and proved in Section 3. 


\section{The $\beta$-restricted Zernike functions (spectral realization)}

In this section we are concerned with the functions 
\begin{align}\label{Zernike0}
	\psi^{\gamma,\eta}_{m,n}(z,\bz)
	&=	(-1)^m z^{\eta-\beta} \bz^{-\eta} (1-|z|^2)^{\gamma-\alpha+m}
	\frac{\partial^m}{\partial z^m}  \big( z^{n+\beta-2\eta} (1-|z|^2)^{\alpha-2\gamma-m-1}\big)
\end{align}
for given reals $\alpha,\beta,\gamma,\eta$. They  
referred to as the $\beta$-restricted Zernike functions (justified by Remark \ref{rembetaBerg} below).
 We aim to derive their basic properties and show that they form an  orthogonal system  
of $L^{2,\alpha}_{\beta}$-eigenfunctions for a perturbed magnetic Laplacian of the form
\begin{align}\label{explicitLG}
	\Delta_{a,b}^{c,d}&=
	\Delta_{hyp}	
	+   (1-|z|^2)\left( H_{a}^b(z) E	
	- 	H_{c}^d(z) \overline{E} \right)   
	+  H_{a}^b(z)H_{c}^d(z) |z|^2
\end{align}
 acting on the weighted Hilbert space $L^{2,\alpha}_\beta(\mathbb{D})$, $\alpha,\beta >-1$.
%
Above  $a$, $b$, $c$ and $d$ are given real numbers,  $\Delta_{hyp} = -   (1-|z|^2)^2{\partial ^2}/{\partial z		\partial {\bar z}} $
is the Laplace--Beltrami operator on the hyperbolic disc,  $\overline{E}= \bz {\partial}/{\partial \bz}$ denotes the complex conjugate of the complex Euler operator $E  := z{\partial}/{\partial z} 
$ and  
\begin{align}\label{Hab} 
	H_{a}^{b}(z) :=  a + b -   \frac{b}{|z|^2} .
\end{align} 
 It is worth noting that for particular values of $a,b,c,d$ one recovers the magnetic Schr\"odinger operator on the hyperbolic unit disc representing the Hamiltonian of a charged particle in motion under an external uniform magnetic field \cite{Comtet1987,BoussjraIntissar1998,GhInJmp2005,Graham1983}. 

To this end,  we have to factorize the considered Laplacian in terms of some first order differential operators (leading in particular to their Rodrigues type formula). Thus, if we set $h_{\alpha,\beta}(z) = h(z)^{\alpha} |z|^{2\beta}$ with $h(z)= 1-|z|^2 $, we can consider the first order differential operator
$$ A_{\gamma,\eta}f(z):=h_{1-\gamma,-\eta}(z)\frac{\partial}{\partial \bz}(h_{\gamma,\eta} f)(z)$$
for given fixed reals $\gamma$ and $\eta$.
Its explicit expression is given by   
\begin{equation}\label{Diffop}
	A_{\gamma,\eta}f(z)= \left\{ (1-|z|^2)  \frac{\partial}{\partial \bz}
	-
	H_{\gamma}^{\eta}(z)\right\} f(z).
\end{equation}
The corresponding null space is closely connected to the set $\text{Hol}(\mathbb{D}^*)$ of holomorphic functions on the punctured unit disc. Namely, we have $\ker(A_{\gamma,\eta})= h_{-\gamma,-\eta}\text{Hol}(\mathbb{D}^*)$.	
Moreover, the formal adjoint operator $A^{*_{\alpha,\beta}}_{\gamma,\eta}$ of $A_{\gamma,\eta}$ with respect to the inner scalar product 
\begin{align}\label{spbeta} \scal{f,g}_{\alpha,\beta} :=  \int_{\mathbb{D}} f(z) \overline{g(z)} d\mu_{\alpha,\beta}(z)
\end{align}
in $L^{2,\alpha}_\beta(\mathbb{D})$ is given by 
\begin{align} 
	A^{*_{\alpha,\beta}}_{\gamma,\eta}f(z) &:= -h_{\gamma-\alpha,\eta-\beta}(z) \frac{\partial}{\partial z} (h_{\alpha-\gamma+1,\beta-\eta} f)(z) 
\end{align}
in account of the conventional calculation.
Accordingly, we perform  
\begin{align}\label{LpLm}
	\mathcal{L}_{\gamma,\eta}^{\alpha,\beta,+}= A_{\gamma,\eta}A^{*_{\alpha,\beta}}_{\gamma,\eta}\quad and \quad \mathcal{L}_{\gamma,\eta}^{\alpha,\beta,-}=A^{*_{\alpha,\beta}}_{\gamma,\eta}A_{\gamma,\eta}.
\end{align}
Straightforward computation leads to the explicit expression of these second order differential operators in terms of $\Delta_{a,b}^{c,d}$  in \eqref{explicitLG}
(we omit the proof).

\begin{lemma}\label{th1}
	The expression of  $\mathcal{L}_{\gamma,\eta} ^{\alpha,\beta,+}$ in the $z$-coordinate is given by  	
	\begin{align*}
		\mathcal{L}_{\gamma,\eta}^{\alpha,\beta,+} &= 
		\Delta_{\gamma+1,\eta}^{\gamma-\alpha-1,\eta-\beta}  + (\alpha-\gamma+1) .
		\nonumber
	\end{align*}
	Moreover, the operators $\mathcal{L}_{\gamma,\eta}^{\alpha,\beta,+}$ and $\mathcal{L}_{\gamma,\eta}^{\alpha,\beta,-}$ satisfy 
	\begin{align} 
		\label{link}
		\mathcal{L}_{\gamma,\eta}^{\alpha,\beta,+}=\mathcal{L}_{\gamma+1,\eta}^{\alpha,\beta,-} + (\alpha-2\gamma)     . 
	\end{align} 
\end{lemma}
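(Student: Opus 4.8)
The plan is to prove both assertions by direct composition of the two first‑order operators, using only the explicit forms of $A_{\gamma,\eta}$ and $A^{*_{\alpha,\beta}}_{\gamma,\eta}$ together with two elementary identities satisfied by the symbols $H^{b}_{a}$. First I would record, from the definitions and the Leibniz rule, the fully reduced expressions
\[
A_{\gamma,\eta}f = (1-|z|^2)\frac{\partial f}{\partial \bz} - z\,H^{\eta}_{\gamma}(z)\,f, \qquad A^{*_{\alpha,\beta}}_{\gamma,\eta}f = -(1-|z|^2)\frac{\partial f}{\partial z} + \bz\,H^{\beta-\eta}_{\alpha-\gamma+1}(z)\,f,
\]
the point being that after differentiating and cancelling the weight powers $h^{\pm}|z|^{\pm}$, each conjugation produces precisely a multiple of $zH^{\eta}_{\gamma}$ (resp.\ $\bz H^{\beta-\eta}_{\alpha-\gamma+1}$). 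Along the way I would isolate the two algebraic facts that drive every simplification: the shift identity $1+H^{b}_{a}(z)=H^{b}_{a+1}(z)$ and the reflection identity $-H^{b}_{a}(z)=H^{-b}_{-a}(z)$.

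Next I would expand $\mathcal{L}_{\gamma,\eta}^{\alpha,\beta,+}=A_{\gamma,\eta}A^{*_{\alpha,\beta}}_{\gamma,\eta}$. The second‑order part is exactly $-(1-|z|^2)^2\partial^2/\partial z\,\partial\bz=\Delta_{hyp}$. Using $\partial_{\bz}(1-|z|^2)=-z$ together with $E=z\partial_z$ and $\overline{E}=\bz\partial_{\bz}$, the two first‑order cross terms collapse to $(1-|z|^2)\big(H^{\eta}_{\gamma}+1\big)E$ and $(1-|z|^2)H^{\beta-\eta}_{\alpha-\gamma+1}\overline{E}$, which by the shift and reflection identities become $(1-|z|^2)H^{\eta}_{\gamma+1}E$ and $-(1-|z|^2)H^{\eta-\beta}_{\gamma-\alpha-1}\overline{E}$, matching the $E,\overline{E}$ terms of $\Delta^{\gamma-\alpha-1,\eta-\beta}_{\gamma+1,\eta}$. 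Finally the zeroth‑order part equals $(1-|z|^2)\,\partial_{\bz}\!\big(\bz H^{\beta-\eta}_{\alpha-\gamma+1}\big)-\big(zH^{\eta}_{\gamma}\big)\big(\bz H^{\beta-\eta}_{\alpha-\gamma+1}\big)$; expanding the product of the two scalar symbols reproduces exactly the term $H^{\eta}_{\gamma+1}H^{\eta-\beta}_{\gamma-\alpha-1}|z|^2$ of $\Delta^{\gamma-\alpha-1,\eta-\beta}_{\gamma+1,\eta}$ plus a leftover genuine constant, and that constant reduces to $\alpha-\gamma+1$ because the parameters obey $(\alpha-\gamma+1+\beta-\eta)-(\beta-\eta)=\alpha-\gamma+1$. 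This establishes the first identity.

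For the relation \eqref{link} I would run the identical computation on $\mathcal{L}_{\gamma,\eta}^{\alpha,\beta,-}=A^{*_{\alpha,\beta}}_{\gamma,\eta}A_{\gamma,\eta}$; the structure is the same, save that the surviving scalar now comes out as $\gamma$, giving $\mathcal{L}_{\gamma,\eta}^{\alpha,\beta,-}=\Delta^{\gamma-\alpha-2,\eta-\beta}_{\gamma,\eta}+\gamma$. Replacing $\gamma$ by $\gamma+1$ yields $\mathcal{L}_{\gamma+1,\eta}^{\alpha,\beta,-}=\Delta^{\gamma-\alpha-1,\eta-\beta}_{\gamma+1,\eta}+(\gamma+1)$, whose differential part coincides with that of $\mathcal{L}_{\gamma,\eta}^{\alpha,\beta,+}$; subtracting the two constants gives $(\alpha-\gamma+1)-(\gamma+1)=\alpha-2\gamma$, which is \eqref{link}. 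I expect the only real obstacle to be the bookkeeping: in each zeroth‑order part one must verify that the $|z|^2$ and $|z|^{-2}$ contributions coming from the derivative $\partial(\,\cdot\,)$ and from the symbol product cancel precisely against the non‑constant pieces hidden inside $H^{b}_{a}H^{d}_{c}|z|^2$, leaving a true scalar, all while keeping the many signs (from $\partial_z h=-\bz$ and $\partial_{\bz}h=-z$) consistent throughout.
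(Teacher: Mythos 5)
Your proposal is correct and coincides with the paper's intended argument: the paper explicitly omits this proof as a ``straightforward computation,'' and your direct expansion of $A_{\gamma,\eta}A^{*_{\alpha,\beta}}_{\gamma,\eta}$ and $A^{*_{\alpha,\beta}}_{\gamma,\eta}A_{\gamma,\eta}$ using the reduced first-order forms together with the shift identity $1+H_{a}^{b}=H_{a+1}^{b}$ and the reflection $H_{-a}^{-b}=-H_{a}^{b}$ supplies exactly that computation, with every constant checking out --- in particular $(1-|z|^2)(a+b)+|z|^2H_{a}^{b}(z)=a$ yields the scalars $\alpha-\gamma+1$ and $\gamma$, so that your intermediate identity $\mathcal{L}_{\gamma,\eta}^{\alpha,\beta,-}=\Delta_{\gamma,\eta}^{\gamma-\alpha-2,\eta-\beta}+\gamma$ and hence \eqref{link} are both valid. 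One incidental merit: your reduced form $A_{\gamma,\eta}f=(1-|z|^2)\,\partial f/\partial\bz - z\,H_{\gamma}^{\eta}(z)\,f$ is the correct consequence of the weight-conjugation definition and quietly corrects a typo in the paper's display \eqref{Diffop}, where the factor $z$ multiplying $H_{\gamma}^{\eta}$ is missing (similarly, the factor $\bz$ appears in the reduced form of the adjoint).
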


\begin{remark} 
	For $\alpha=-2$ and $\beta =0$ we have $\mathcal{L}_{\gamma,\eta}^{\alpha,\beta,+}=\mathcal{L}_{\gamma+1,\eta}^{\alpha,\beta,-}  -2(\gamma+1) $. Also  $H_{\alpha-\gamma+1}^{\beta-\eta} = -H_{\gamma+1}^{\eta}$ so that the Laplacian $\mathcal{L}_{\gamma,\eta}^{\alpha,\beta,+}$ reduces further to 
	\begin{align}\label{Lap+red3}
		\mathcal{L}_{\gamma,\eta}^{\alpha,\beta,+} 
		&= - h \left\{h \frac{\partial^2}{\partial z \partial \bz} -  H_{\gamma+1}^{\eta}(z)\left(   E  - \overline{E}\right)  \right\}  +  (H_{\gamma+1}^{\eta}(z))^2  |z|^2   -(\gamma+1 ).
	\end{align}
	For the particular cases of $\gamma,\eta$ we recover the Landau-like Hamiltonian on $\D$ (see e.g. \cite{BoussjraIntissar1998,GhInJmp2005,Graham1983}). 
\end{remark}

\begin{remark} \label{rempotential}
	The considered operators $\mathcal{L}_{\gamma,\eta}^{\alpha,\beta,+}$ and $\mathcal{L}_{\gamma,\eta}^{\alpha,\beta,-}$ can be realized geometrically as  magnetic Schr\"odinger operators associated with a singular real differential $1$-form (vector potential) $\theta_{\alpha,\beta}= \theta_{\alpha} + \widetilde{\theta_{\beta}}$ with $d \widetilde{\theta_{\beta}}=0$ and $d\theta_{\alpha}$ is the Kh\"aler two form on the hyperbolic unit disc up to a multiplicative constant. More precisely, we have
	\begin{align}  
		\theta_{\alpha,\beta}(z)&=  \frac{i \alpha \left( {\bar
				{z}}dz - zd\bar z \right) }{1
			-|z|^2}	 - i\beta \left( \frac{dz}{z}  - \frac{d\bz}{\bz} \right) .
	\end{align}  
\end{remark}

Now, by means of the identity \eqref{link} we can establish the following (we omit the proof). 

\begin{lemma}\label{CorIdComm}
	The following commutation rules hold trues 
	\begin{align*} 
		(i)	& \qquad \mathcal{L}_{\gamma,\eta}^{\alpha,\beta,+} A_{\gamma,\eta} =  A_{\gamma,\eta} \mathcal{L}_{\gamma,\eta}^{\alpha,\beta,-} \quad \mbox{and} \quad  A^{*_{\alpha,\beta}}_{\gamma,\eta}\mathcal{L}_{\gamma,\eta}^{\alpha,\beta,+} =   \mathcal{L}_{\gamma,\eta}^{\alpha,\beta,-} A^{*_{\alpha,\beta}}_{\gamma,\eta} .
		\\
		(ii)   & \qquad	\mathcal{L}_{\gamma,\eta}^{\alpha,\beta,+} A^{*_{\alpha,\beta}}_{\gamma+1,\eta}=A^{*_{\alpha,\beta}}_{\gamma+1,\eta}\left( \mathcal{L}_{\gamma+1,\eta}^{\alpha,\beta,+}  + (\alpha-2\gamma)  \right)  . \\
		(iii)   & \qquad	A_{\gamma+1,\eta}\mathcal{L}_{\gamma,\eta}^{\alpha,\beta,+} =\left( \mathcal{L}_{\gamma+1,\eta}^{\alpha,\beta,+}  + (\alpha-2\gamma) \right)  A_{\gamma+1,\eta} .\\
		(iv)   &  \qquad 	\mathcal{L}_{\gamma+1,\eta}^{\alpha,\beta,-}A_{\gamma,\eta} 
		= A_{\gamma,\eta}\left( \mathcal{L}_{\gamma,\eta}^{\alpha,\beta,-}   - (\alpha-2\gamma)   \right) . \\
		(v)   &  \qquad 	A^{*_{\alpha,\beta}}_{\gamma,\eta} \mathcal{L}_{\gamma+1,\eta}^{\alpha,\beta,-} 
		= \left(\mathcal{L}_{\gamma,\eta}^{\alpha,\beta,-}   - (\alpha-2\gamma) \right) A^{*_{\alpha,\beta}}_{\gamma,\eta}    . 
	\end{align*}
\end{lemma}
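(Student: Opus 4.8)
The plan is to treat part (i) as purely formal and then bootstrap the remaining four identities from it together with the shift relation \eqref{link}. For part (i) I would simply invoke the definitions \eqref{LpLm}: since $\mathcal{L}_{\gamma,\eta}^{\alpha,\beta,+}=A_{\gamma,\eta}A^{*_{\alpha,\beta}}_{\gamma,\eta}$ and $\mathcal{L}_{\gamma,\eta}^{\alpha,\beta,-}=A^{*_{\alpha,\beta}}_{\gamma,\eta}A_{\gamma,\eta}$, associativity of operator composition gives $\mathcal{L}_{\gamma,\eta}^{\alpha,\beta,+}A_{\gamma,\eta} = (A_{\gamma,\eta}A^{*_{\alpha,\beta}}_{\gamma,\eta})A_{\gamma,\eta} = A_{\gamma,\eta}(A^{*_{\alpha,\beta}}_{\gamma,\eta}A_{\gamma,\eta}) = A_{\gamma,\eta}\mathcal{L}_{\gamma,\eta}^{\alpha,\beta,-}$, and symmetrically $A^{*_{\alpha,\beta}}_{\gamma,\eta}\mathcal{L}_{\gamma,\eta}^{\alpha,\beta,+} = A^{*_{\alpha,\beta}}_{\gamma,\eta}(A_{\gamma,\eta}A^{*_{\alpha,\beta}}_{\gamma,\eta}) = \mathcal{L}_{\gamma,\eta}^{\alpha,\beta,-}A^{*_{\alpha,\beta}}_{\gamma,\eta}$. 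No manipulation of the explicit differential symbols of the operators is needed.

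For parts (iv) and (v) the idea is to eliminate $\mathcal{L}^{-}$ at the shifted index $\gamma+1$ in favour of $\mathcal{L}^{+}$ at index $\gamma$, using \eqref{link} in the form $\mathcal{L}_{\gamma+1,\eta}^{\alpha,\beta,-}=\mathcal{L}_{\gamma,\eta}^{\alpha,\beta,+}-(\alpha-2\gamma)$, and then to feed the result through the intertwiners of (i) at the \emph{same} index $\gamma$. Concretely, for (iv) I would write $\mathcal{L}_{\gamma+1,\eta}^{\alpha,\beta,-}A_{\gamma,\eta}=(\mathcal{L}_{\gamma,\eta}^{\alpha,\beta,+}-(\alpha-2\gamma))A_{\gamma,\eta}$, push $\mathcal{L}_{\gamma,\eta}^{\alpha,\beta,+}A_{\gamma,\eta}$ through the first intertwiner of (i) to obtain $A_{\gamma,\eta}\mathcal{L}_{\gamma,\eta}^{\alpha,\beta,-}$, and collect the scalar, which gives $A_{\gamma,\eta}(\mathcal{L}_{\gamma,\eta}^{\alpha,\beta,-}-(\alpha-2\gamma))$; here $(\alpha-2\gamma)$ is a scalar and commutes freely with every operator. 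Identity (v) is the $A^{*}$-analog, obtained in the same way from the second intertwiner of (i). For (ii) and (iii) I would instead apply \eqref{link} at index $\gamma$ to convert $\mathcal{L}_{\gamma,\eta}^{\alpha,\beta,+}$ into $\mathcal{L}_{\gamma+1,\eta}^{\alpha,\beta,-}+(\alpha-2\gamma)$, and then use the intertwiners of (i) at the \emph{shifted} index $\gamma+1$ to turn $\mathcal{L}_{\gamma+1,\eta}^{\alpha,\beta,-}A^{*_{\alpha,\beta}}_{\gamma+1,\eta}$ into $A^{*_{\alpha,\beta}}_{\gamma+1,\eta}\mathcal{L}_{\gamma+1,\eta}^{\alpha,\beta,+}$ (yielding (ii)) and $A_{\gamma+1,\eta}\mathcal{L}_{\gamma+1,\eta}^{\alpha,\beta,-}$ into $\mathcal{L}_{\gamma+1,\eta}^{\alpha,\beta,+}A_{\gamma+1,\eta}$ (yielding (iii)).

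I expect no genuine obstacle: once the factorized form \eqref{LpLm} and the scalar shift \eqref{link} are in hand, every identity is a one-line rearrangement, and the only thing to watch is keeping the index shifts $\gamma\leftrightarrow\gamma+1$ and the placement of the scalar $(\alpha-2\gamma)$ consistent. In particular, nothing beyond Lemma \ref{th1} and the definitions \eqref{LpLm} enters, which is precisely why the explicit expressions of $A_{\gamma,\eta}$ and $A^{*_{\alpha,\beta}}_{\gamma,\eta}$ never need to be touched and the proof may safely be omitted as routine.
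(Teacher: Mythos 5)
Your proof is correct and follows precisely the route the paper intends: the paper omits the proof but states that the lemma is established ``by means of the identity \eqref{link}'', and your argument---part (i) by pure associativity from the factorizations \eqref{LpLm}, then (ii)--(v) by substituting \eqref{link} at the appropriate index and pushing through the intertwiners of (i)---is exactly that argument, with all index shifts and scalar placements verified to be consistent. Nothing beyond Lemma \ref{th1} and the definitions is needed, as you say.
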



Lemma \ref{CorIdComm} is efficient in analyzing and studying the family of functions $	\psi^{\gamma,\eta}_{m,n}$ in \eqref{Zernike0}.
In fact, we show that they can be obtained by successive application of $A^*_{\gamma+j,\eta}$; $j= 1,2,\cdots ,m$, to the ground state functions. Namely, we consider the differential operator   
\begin{align*}
	A^{*,m}_{\gamma,\eta}(f) & := A^{*_{\alpha,\beta}}_{\gamma+1,\eta}\circ A^{*_{\alpha,\beta}}_{\gamma+2,\eta}\circ \cdots \circ A^{*_{\alpha,\beta}}_{\gamma+m,\eta}(f).
\end{align*}

Then, we claim the following. 

\begin{lemma}\label{lemdRodpsi}
	The closed expression of Rodrigues type for $	A^{*,m}_{\gamma,\eta}$ is given by 
	\begin{align}\label{hdm}
		A^{*,m}_{\gamma,\eta}(f) 
		&=(-1)^m h_{\gamma-\alpha+m,\eta-\beta}  \frac{\partial^m}{\partial z^m}  (h_{\alpha-\gamma ,\beta-\eta} f) .
	\end{align}
\end{lemma}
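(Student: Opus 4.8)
The plan is to proceed by induction on $m$, arranging the composition so that the weight functions $h_{a,b}=h^{a}|z|^{2b}$ telescope. The base case $m=1$ is nothing but the defining formula for $A^{*_{\alpha,\beta}}_{\gamma+1,\eta}$ recalled above. For the inductive step I would peel off the \emph{outermost} factor and use the recursion $A^{*,m}_{\gamma,\eta}=A^{*_{\alpha,\beta}}_{\gamma+1,\eta}\circ A^{*,m-1}_{\gamma+1,\eta}$, so that the induction hypothesis is applied at the shifted parameter $\gamma+1$.

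The bookkeeping becomes transparent after the substitution $f=h_{\gamma-\alpha,\eta-\beta}\,w$, which is legitimate since $h_{a,b}$ is nowhere vanishing on $\D^{*}$ with inverse $h_{-a,-b}$, and which recasts \eqref{hdm} into the equivalent form
\[
A^{*,m}_{\gamma,\eta}\big(h_{\gamma-\alpha,\eta-\beta}\,w\big)=(-1)^{m}\,h_{\gamma-\alpha+m,\eta-\beta}\,\partial_z^{m}w .
\]
The two facts I would record first are the multiplicativity $h_{a,b}h_{c,d}=h_{a+c,b+d}$ and the resulting \emph{collapse} of a single operator: for any $j$ and any $v$,
\[
A^{*_{\alpha,\beta}}_{\gamma+1,\eta}\big(h_{\gamma-\alpha+j,\eta-\beta}\,v\big)=-\,h_{\gamma+1-\alpha,\eta-\beta}\,\partial_z\big(h^{j}\,v\big),
\]
where the $|z|^{2(\eta-\beta)}$ and $|z|^{2(\beta-\eta)}$ factors cancel completely and the powers of $h$ combine to $h^{j}$. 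Writing the argument $h_{\gamma-\alpha,\eta-\beta}w$ as $h_{(\gamma+1)-\alpha,\eta-\beta}(h^{-1}w)$ and invoking the induction hypothesis at $\gamma+1$ produces $v=(-1)^{m-1}\partial_z^{m-1}(h^{-1}w)$ carried by the weight $h_{\gamma-\alpha+m,\eta-\beta}$; substituting this into the collapse formula (with $j=m$) reduces the entire step to a single scalar identity.

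That identity is the genuinely essential point, namely
\[
\partial_z\big(h^{m}\,\partial_z^{m-1}v\big)=h^{m-1}\,\partial_z^{m}\big(h\,v\big),
\]
where I have set $v=h^{-1}w$. Here the specific structure of $h=1-|z|^{2}$ enters: regarding $\bz$ as inert under $\partial_z$, the function $h$ is affine in $z$, so $\partial_z h=-\bz$ and $\partial_z^{2}h=0$. Expanding both sides by the Leibniz rule then leaves only the first two terms, and one checks at once that
\[
\partial_z\big(h^{m}\partial_z^{m-1}v\big)=h^{m}\partial_z^{m}v-m\,\bz\,h^{m-1}\partial_z^{m-1}v=h^{m-1}\partial_z^{m}(h\,v),
\]
which closes the induction. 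I expect this Leibniz identity to be the only real obstacle; everything preceding it is the routine (if notation-heavy) telescoping of the $h_{a,b}$ weights, and the affinity of $h$ in $z$ is precisely what makes the derivative identity hold with no error terms.
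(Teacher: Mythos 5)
Your proof is correct, and it takes a mildly different route from the paper's. The paper proves \eqref{hdm} in one global step: it unwinds the entire composition at once, letting the intermediate weights telescope to obtain
\begin{align*}
A^{*,m}_{\gamma,\eta}(f)=(-1)^m h_{\gamma-\alpha-1,\eta-\beta}\left(h^2\frac{\partial}{\partial z}\right)^m\big(h_{\alpha-\gamma-m+1,\beta-\eta}\,f\big),
\end{align*}
and then invokes the ready-made operator identity $(h^2\partial_z)^m(g)=h^{m+1}\partial_z^m(h^{m-1}g)$ cited from \cite{Ghanmi2010}. You instead peel one factor at a time via the recursion $A^{*,m}_{\gamma,\eta}=A^{*_{\alpha,\beta}}_{\gamma+1,\eta}\circ A^{*,m-1}_{\gamma+1,\eta}$ (the same recursion the paper itself uses later, in the orthogonality part of Theorem \ref{Mth3}) and prove the required Leibniz identity $\partial_z\big(h^m\partial_z^{m-1}v\big)=h^{m-1}\partial_z^m(hv)$ from scratch, using only $\partial_z h=-\bz$ and $\partial_z^2h=0$. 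These are two sides of the same computation: your scalar identity is exactly the inductive step by which the cited fact $(h^2\partial_z)^m=h^{m+1}\partial_z^m\,h^{m-1}$ is itself established, so your argument in effect reproves that external lemma inline. What the paper's route buys is brevity (one telescoping collapse plus a citation); what yours buys is self-containedness, at the cost of slightly heavier bookkeeping with the shifted parameter $\gamma+1$ and the substitution $f=h_{\gamma-\alpha,\eta-\beta}\,w$. All of your individual steps check out: the base case matches the definition of $A^{*_{\alpha,\beta}}_{\gamma+1,\eta}$, the $|z|^{2(\eta-\beta)}$ and $|z|^{2(\beta-\eta)}$ factors do cancel in your collapse formula, the induction hypothesis applied at $\gamma+1$ to $h^{-1}w$ yields the weight $h_{\gamma-\alpha+m,\eta-\beta}$ as claimed, and the final Leibniz expansion terminates after two terms precisely because $h$ is affine in $z$.
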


\begin{proof}
	Starting from the definition of $A^{*,m}_{\gamma,\eta}$ one gets
	\begin{align*}
		A^{*,m}_{\gamma,\eta}(f)
		&=(-1)^m h_{\gamma-\alpha-1,\eta-\beta}\left( h^2 \frac{\partial}{\partial z}\right) ^m (h_{\alpha-\gamma-m+1,\beta-\eta} f)  .
	\end{align*}
	Then \eqref{hdm} readily follows thanks to the fact 
	$(h^2 \partial)^m (f) =h^{m+1}
	\partial^m(h^{m-1}f)$  in \cite{Ghanmi2010}.
\end{proof}


The main result in this section is the following.

\begin{theorem}\label{Mth3} Fix $\gamma$ such that $\alpha > 2\gamma+1$. Then, for integers $m,n$ such that $n>2\eta-\beta-1$ and $ 0\leq m < (\alpha-1 -2\gamma)/2$, the following assertions hold.
	\begin{itemize}
		\item[(i)] The function $\psi^{\gamma,\eta}_{m,n}$ is a  $L^{2,\alpha}_{\beta}$-eigenfunction of $\mathcal{L}_{\gamma,\eta}^{\alpha,\beta,+}$ with $ E^{\gamma,\alpha}_m = 	(m+1)(\alpha-2\gamma-m)$ as corresponding eigenvalue.
		\item[(ii)] The functions $\psi^{\gamma,\eta}_{m,n}$
		form an orthogonal system in the Hilbert space $L^{2,\alpha}_{\beta}(\mathbb{D})$ and their square norm (induced from \eqref{spbeta}) is given by
		\begin{align}\label{normZernike}
			\norm{\psi^{\gamma,\eta}_{m,n}}_{\alpha,\beta}^2 = 
			\frac{\pi m!}{(\alpha-2(\gamma+m)-1)} 	 
			\frac{ \Gamma(\alpha-2\gamma -m)\Gamma(n+\beta-2\eta+1) }{\Gamma(n + \alpha +\beta -2(\gamma+\eta+m))}.
		\end{align}
	Here $\Gamma$ is the Gamma Euler function.
	\end{itemize}
\end{theorem}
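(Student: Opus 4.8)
The plan is to leverage the factorization machinery built up in Lemmas \ref{th1}--\ref{lemdRodpsi}, which essentially reduces the entire theorem to understanding the ground states and then propagating upward via the raising operators $A^{*_{\alpha,\beta}}_{\gamma+j,\eta}$. First I would identify the functions $\psi^{\gamma,\eta}_{m,n}$ with the output of the composite operator $A^{*,m}_{\gamma,\eta}$ applied to an appropriate ground state. Comparing the Rodrigues-type formula \eqref{hdm} with the definition \eqref{Zernike0}, one checks that
\begin{align*}
\psi^{\gamma,\eta}_{m,n}(z,\bz) = A^{*,m}_{\gamma,\eta}\big( \phi^{\gamma+m,\eta}_{n}\big)(z,\bz),
\end{align*}
where the ground state $\phi^{\gamma+m,\eta}_{n}$ is (up to normalization) $h_{-(\gamma+m),-\eta}(z)\, z^{n}$, i.e. an element of $h_{-(\gamma+m),-\eta}\,\mathrm{Hol}(\mathbb{D}^*)$, hence lies in $\ker(A_{\gamma+m,\eta})$ by the kernel description given after \eqref{Diffop}. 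This is the base case: $\phi^{\gamma+m,\eta}_{n}$ is annihilated by the lowering operator and is therefore an $\mathcal{L}^{\alpha,\beta,-}_{\gamma+m,\eta}$-eigenfunction with eigenvalue $0$, so by \eqref{link} it is an $\mathcal{L}^{\alpha,\beta,+}_{\gamma+m,\eta}$-eigenfunction with eigenvalue $\alpha-2(\gamma+m)$.

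Next, for part (i), I would propagate this eigenvalue up through the $m$ applications of the raising operator using the intertwining relation (ii) of Lemma \ref{CorIdComm}, namely $\mathcal{L}_{\gamma,\eta}^{\alpha,\beta,+} A^{*_{\alpha,\beta}}_{\gamma+1,\eta}=A^{*_{\alpha,\beta}}_{\gamma+1,\eta}\big( \mathcal{L}_{\gamma+1,\eta}^{\alpha,\beta,+} + (\alpha-2\gamma)\big)$. Iterating this shift, each raising step conjugates $\mathcal{L}^{\alpha,\beta,+}$ by $A^{*_{\alpha,\beta}}$ at the cost of an additive spectral shift; accumulating these shifts together with the base eigenvalue should telescope exactly to $E^{\gamma,\alpha}_m=(m+1)(\alpha-2\gamma-m)$. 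I would verify the telescoping by induction on $m$: assuming $A^{*,m-1}_{\gamma+1,\eta}$ produces an eigenfunction of $\mathcal{L}^{\alpha,\beta,+}_{\gamma+1,\eta}$ with the appropriate eigenvalue, one application of (ii) plus the arithmetic of the shift yields the claim for $m$. The constraints $0\leq m<(\alpha-1-2\gamma)/2$ and $\alpha>2\gamma+1$ are exactly what keep all the intermediate exponents $\alpha-2(\gamma+j)-1$ positive so the functions remain in $L^{2,\alpha}_\beta(\mathbb{D})$ at every stage; I would track this integrability condition alongside the induction.

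For part (ii), orthogonality splits naturally into the angular and radial variables. Writing $z=re^{i\theta}$, the functions $\psi^{\gamma,\eta}_{m,n}$ carry a clean monomial angular factor (the $z^{\eta-\beta}\bz^{-\eta}$ prefactor combined with the $\partial_z^m$ of $z^{n+\beta-2\eta}(\cdots)$ produces an overall phase $e^{i(n-m)\theta}$ times a radial function), so orthogonality in $n$ for distinct angular quantum numbers is immediate from $\int_0^{2\pi}e^{i(n-m)\theta}d\theta$. Orthogonality in $m$ for fixed angular index instead follows from self-adjointness: since $\mathcal{L}^{\alpha,\beta,+}_{\gamma,\eta}=A_{\gamma,\eta}A^{*_{\alpha,\beta}}_{\gamma,\eta}$ is formally self-adjoint on $L^{2,\alpha}_\beta(\mathbb{D})$ and the $\psi^{\gamma,\eta}_{m,n}$ are eigenfunctions with distinct eigenvalues $E^{\gamma,\alpha}_m$ (distinct for $0\le m<(\alpha-1-2\gamma)/2$, as one checks the map $m\mapsto(m+1)(\alpha-2\gamma-m)$ is injective on this range), eigenfunctions for different eigenvalues are automatically orthogonal. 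The square-norm computation is then the genuinely computational part: I would reduce it via the adjoint relation $\scal{A^{*,m}f, A^{*,m}f}_{\alpha,\beta}=\scal{(A^{*,m})^{*}A^{*,m}f, f}_{\alpha,\beta}$, pushing the lowering operators back onto the ground state so that the norm of $\psi^{\gamma,\eta}_{m,n}$ reduces to the norm of $\phi^{\gamma+m,\eta}_{n}$ times an explicit product of the spectral shifts. The ground-state norm is a single Beta-type integral $\int_0^1 (1-t)^{\alpha-2(\gamma+m)-1}t^{n+\beta-2\eta}dt$ in the radial variable, evaluating to a ratio of Gamma functions; multiplying by the telescoping product of eigenvalue gaps should assemble into \eqref{normZernike}.

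The main obstacle I anticipate is the bookkeeping in the norm computation — correctly tracking how the commutation identities (iv)--(v) of Lemma \ref{CorIdComm} convert the composite $\langle A^{*,m}\phi, A^{*,m}\phi\rangle$ into the ground-state norm scaled by the right product of factors, and ensuring no boundary terms appear when moving operators across the inner product. Because the measure degenerates both at the origin (the $|z|^{2\beta}$ factor) and at the boundary (the $(1-|z|^2)^\alpha$ factor), I would need the constraints $n>2\eta-\beta-1$ and the $m$-range to guarantee that every integration-by-parts producing $A^{*_{\alpha,\beta}}$ from $A_{\gamma,\eta}$ has vanishing boundary contributions at both $r=0$ and $r=1$; verifying this decay is where the hypotheses earn their keep and where an otherwise routine calculation could go wrong.
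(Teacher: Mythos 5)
Your overall strategy --- Schr\"odinger factorization, intertwining via Lemma \ref{CorIdComm}, telescoping the eigenvalue and the norm down to a ground state, and a final Beta integral --- is exactly the paper's, but your base case contains an off-by-one error that, as written, breaks the identification with $\psi^{\gamma,\eta}_{m,n}$. Comparing \eqref{hdm} with \eqref{Zernike0}, the function fed into $A^{*,m}_{\gamma,\eta}$ must satisfy $h_{\alpha-\gamma,\beta-\eta}\,\varphi = z^{n} h_{\alpha-2\gamma-m-1,\,\beta-2\eta}$, so the correct ground state is $\varphi^{\gamma,\eta}_{m,n}(z)= z^n (1-|z|^2)^{-(\gamma+m+1)}|z|^{-2\eta} = z^n\, h_{-(\gamma+m+1),-\eta}$, an element of $\ker(A_{\gamma+m+1,\eta})$. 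Your choice $h_{-(\gamma+m),-\eta}\,z^n$ produces the interior exponent $\alpha-2\gamma-m$ instead of $\alpha-2\gamma-m-1$, so $A^{*,m}_{\gamma,\eta}$ applied to it is \emph{not} $\psi^{\gamma,\eta}_{m,n}$ (nor any $\psi^{\gamma',\eta}_{m,n}$, since shifting $\gamma$ would also shift the outer weight). The same slip infects your eigenvalue justification: membership in $\ker(A_{\gamma+m,\eta})$ gives $\mathcal{L}^{\alpha,\beta,-}_{\gamma+m,\eta}\varphi=0$, and \eqref{link} then exhibits $\varphi$ as an eigenfunction of $\mathcal{L}^{\alpha,\beta,+}_{\gamma+m-1,\eta}$ with eigenvalue $\alpha-2(\gamma+m-1)$ --- not, as you assert, of $\mathcal{L}^{\alpha,\beta,+}_{\gamma+m,\eta}$ with eigenvalue $\alpha-2(\gamma+m)$. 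Indeed your candidate equals $h_{-(\gamma+m+1),-\eta}$ times the non-holomorphic factor $(1-|z|^2)z^n$, so it is not annihilated by $A_{\gamma+m+1,\eta}$ and is not an eigenfunction of $\mathcal{L}^{\alpha,\beta,+}_{\gamma+m,\eta}$ at all. The fix is local: with the corrected ground state, $\mathcal{L}^{\alpha,\beta,-}_{\gamma+m+1,\eta}\varphi^{\gamma,\eta}_{m,n}=0$ and \eqref{link} at level $\gamma+m$ give the base eigenvalue $\alpha-2(\gamma+m)$ for $\mathcal{L}^{\alpha,\beta,+}_{\gamma+m,\eta}$, after which your iteration of the intertwining relation adds $\sum_{j=0}^{m-1}(\alpha-2(\gamma+j))$ and totals $(m+1)(\alpha-2\gamma-m)$, exactly as in the paper's induction.

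Once repaired, your part (ii) is sound and diverges from the paper in one genuine respect: you obtain orthogonality in $m$ from formal self-adjointness of $\mathcal{L}^{\alpha,\beta,+}_{\gamma,\eta}$ together with injectivity of $m\mapsto(m+1)(\alpha-2\gamma-m)$ on $0\le m<(\alpha-1-2\gamma)/2$ (correct: the vertex of this parabola sits at $(\alpha-2\gamma-1)/2$, so the eigenvalues are strictly increasing, hence distinct, on the admissible range), while the paper never invokes distinctness of eigenvalues: it telescopes the inner product through the factorization and kills the cross terms outright via $A_{\gamma+j,\eta}\circ\cdots\circ A_{\gamma+m+1,\eta}\,(\varphi^{\gamma+m,\eta}_{0,n})=0$ for $m<j$, since the ground state lies in $\ker(A_{\gamma+m+1,\eta})$. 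Both routes are legitimate, but note that the boundary-term vanishing you flag as the main obstacle must be justified in either approach (the paper's telescoping also moves operators across the inner product), and once it is, the paper's kernel argument requires nothing further --- whereas your route additionally needs the eigenvalue-distinctness check. Your norm computation matches the paper's: the product of spectral gaps is $m!\,(\alpha-2(\gamma+m))_m = m!\,\Gamma(\alpha-2\gamma-m)/\Gamma(\alpha-2(\gamma+m))$, the ground-state norm is $\pi B(n+\beta-2\eta+1,\,\alpha-2(\gamma+m)-1)$ with the angular integral supplying $\delta_{n,k}$, and these assemble into \eqref{normZernike} under the stated constraints $n>2\eta-\beta-1$ and $\alpha-2(\gamma+m)-1>0$.
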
 

\begin{proof}
	In virtue of the algebraic identity $(iii)$ in Lemma \ref{CorIdComm} and the identity \eqref{link} we can proceed by mathematical induction to get
	\begin{align*}
		\mathcal{L}_{\gamma,\eta}^{\alpha,\beta,+} A^{*,m}_{\gamma,\eta}
		&= A^{*,m}_{\gamma,\eta} \mathcal{L}_{\gamma+m,\eta}^{\alpha,\beta,+} +\sum_{j=0}^{m-1}(\alpha-2(\gamma+j)) A^{*,m}_{\gamma,\eta} \\
		&= A^{*,m}_{\gamma,\eta} \left( \mathcal{L}_{\gamma+m+1,\eta}^{\alpha,\beta,-} + (\alpha-2(\gamma+m))\right)  +\sum_{j=0}^{m-1}(\alpha-2(\gamma+j)) A^{*,m}_{\gamma,\eta} \\
		&= A^{*,m}_{\gamma,\eta} \mathcal{L}_{\gamma+m+1,\eta}^{\alpha,\beta,-}  +\sum_{j=0}^{m}(\alpha-2(\gamma+j)) A^{*,m}_{\gamma,\eta} \\&=
		A^{*,m}_{\gamma,\eta} \mathcal{L}_{\gamma+m+1,\eta}^ {\alpha,\beta,-} + (m+1)(\alpha-2\gamma-m) A^{*,m}_{\gamma,\eta} .
	\end{align*}
	Accordingly, it becomes clear that the functions  $A^{*,m}_{\gamma,\eta}(\varphi^{\gamma,\eta}_{m})$ are eigenfunctions of $\mathcal{L}_{\gamma,\eta}^{\alpha,\beta,+}$ whenever $\varphi^{\gamma,\eta}_{m}$  belongs to the null space of
	$A_{\gamma+m+1,\eta}$, 
	$$\ker(A_{\gamma+m+1,\eta})=\{f: \mathbb{D}^* \longrightarrow \C; \, \, A_{\gamma+m+1,\eta} f=0\} \subseteq \ker \left( \mathcal{L}_{\gamma+m+1,\eta}^{-}\right).$$
	This is the case when considering 
	\begin{eqnarray}\label{eq1}
		\varphi^{\gamma,\eta}_{m}(z) = \varphi^{\gamma,\eta}_{m,n}(z) : = z^n (1-|z|^2)^{-(\gamma+m+1)}|z|^{-2\eta}; \ \  n\in \Z  .
	\end{eqnarray} 
	More precisely, the functions   $A^{*,m}_{\gamma,\eta}(\varphi^{\gamma,\eta}_{m})=A^{*,m}_{\gamma,\eta}(z^n h_{-(\gamma+m+1),-\eta})$ 
	are given by			\begin{align}\label{Zernike01}
		A^{*,m}_{\gamma,\eta}&(z^n h_{-(\gamma+m+1),-\eta})
		= (-1)^m h_{\gamma-\alpha+m,\eta-\beta}  	\frac{\partial^m}{\partial z^m}  (z^n  h_{\alpha-2\gamma-m-1 ,\beta-2\eta} )
		\\&=
		(-1)^m (1-|z|^2)^{\gamma-\alpha+m}|z|^{	2(\eta-\beta)} \frac{\partial^m}{\partial z^m}  \big(z^n |z|^{2(\beta-2\eta)} (1-|z|^2)^{\alpha-2(\gamma+m)+m-1}\big) \nonumber
	\end{align}
	thanks to Lemma \ref{lemdRodpsi}. 
	The latter formula reduces further to the expression of the $\beta$-restricted Zernike functions  in \eqref{Zernike0}. Moreover,  they
	satisfy 
	\begin{align}\label{eigenfunct} \mathcal{L}_{\gamma,\eta}^{\alpha,\beta,+} \left( \psi^{\gamma,\eta}_{m,n}\right)=(m+1)(\alpha-2\gamma-m)\psi^{\gamma,n}_{m,n} = E^{\gamma,\alpha}_m \psi^{\gamma,\eta}_{m,n}.
	\end{align}
	Now, for their orthogonality in $L^{2,\alpha}_{\beta}(\mathbb{D})$ one can use their explicit expressions in terms of certain special functions (see for example Remark \ref{rembetaZJacobi} below). However, we present below another proof using the factorization method. To this purpose, notice first that $A^{*,m}_{\gamma,\eta} = A^{*_{\alpha,\beta}}_{\gamma+1,\eta}\circ A^{*,m-1}_{\gamma+1,\eta}$ and that $\varphi^{\gamma,\eta}_{m,n}= \varphi^{\gamma+1,\eta}_{m-1,n}$. It follows
	$$\psi^{\gamma,\eta}_{m,n}= 
	A^{*,m}_{\gamma,\eta}(\varphi^{\gamma,\eta}_{m,n})
	=	A^{*_{\alpha,\beta}}_{\gamma+1,\eta}\circ A^{*,m-1}_{\gamma+1,\eta}(\varphi^{\gamma,\eta}_{m,n})
	= A^{*_{\alpha,\beta}}_{\gamma+1,\eta} (\psi^{\gamma+1,\eta}_{m-1,n}) .$$
	Accordingly, making use of \eqref{eigenfunct} we obtain   
	\begin{align*}
		\scal{\psi^{\gamma,\eta}_{m,n},\psi^{\gamma,\eta}_{j,k}} 
		&	= \scal{ \mathcal{L}_{\gamma+1,\eta}^{\alpha,\beta,+}  ( \psi^{\gamma+1,\eta}_{m-1,n}  ) , \psi^{\gamma+1,\eta}_{j-1,k}} 
		= E^{\gamma+1,\alpha}_{m-1} \scal{ \psi^{\gamma+1,\eta}_{m-1,n},  \psi^{\gamma+1,\eta}_{j-1,k}} .
	\end{align*} 
	More generally, by induction we arrive at 
	\begin{align*}
		\scal{\psi^{\gamma,\eta}_{m,n},\psi^{\gamma,\eta}_{j,k}} 
		&= \prod_{\ell=1}^{s} 
		E^{\gamma+\ell,\alpha}_{m-\ell}  \scal{ \psi^{\gamma+s,\eta}_{m-s,n},  \psi^{\gamma+s,\eta}_{j-s,k}} ; \, 1 \leq s \leq m.
	\end{align*}
	Therefore, without lost of generality we can assume that $m\leq j$ and take $s=m$ to get 
	\begin{align*}
		\scal{\psi^{\gamma,\eta}_{m,n},\psi^{\gamma,\eta}_{j,k}} 
		&= \prod_{\ell=1}^{m} 
		E^{\gamma+\ell,\alpha}_{m-\ell}  \scal{ \psi^{\gamma+m,\eta}_{0,n},  \psi^{\gamma+m,\eta}_{j-m,k}} \\ 
		&= \prod_{\ell=1}^{m} 
		E^{\gamma+\ell,\alpha}_{m-\ell}\scal{ \psi^{\gamma+m,\eta}_{0,n}, A^*_{\gamma+m+1,\eta}\circ A^*_{\gamma+m+2,\eta}\circ \cdots \circ A^*_{\gamma+j,\eta}( \varphi^{\gamma+m,\eta}_{j-m,k})} \\
		&= \prod_{\ell=1}^{m} 
		E^{\gamma+\ell,\alpha}_{m-\ell}\scal{  A_{\gamma+j,\eta}\circ \cdots \circ  A_{\gamma+m+1,\eta} 	(\varphi^{\gamma+m,\eta}_{0,n}),  \varphi^{\gamma+m,\eta}_{j-m,k} } .
	\end{align*}
	The last identity holds by observing that 
	$ \psi^{\gamma+s,\eta}_{0,n} = 	\varphi^{\gamma+s,\eta}_{0,n}$, which readily follows  from \eqref{hdm} or \eqref{Zernike01}. 
	Next, since $\varphi^{\gamma+m,\eta}_{0,n}
	$ belongs to $\ker (A_{\gamma+m+1,\eta})$ and then $ A_{\gamma+j,\eta}\circ \cdots \circ  A_{\gamma+m+2,\eta}\circ A_{\gamma+m+1,\eta}
	(\varphi^{\gamma+m,\eta}_{0,n})$ vanishes whenever $m<j$, we obtain 
	\begin{align*}
		\scal{\psi^{\gamma,\eta}_{m,n},\psi^{\gamma,\eta}_{j,k}} 
		&= \left( \prod_{\ell=1}^{m} 		E^{\gamma+\ell,\alpha}_{m-\ell}  \right) 
		\scal{  \varphi^{\gamma+m,\eta}_{0,n}, \varphi^{\gamma+m,\eta}_{0,k}} \delta_{m,j}.
	\end{align*}
	To the computation of the  quantity $\scal{  \varphi^{\gamma+m,\eta}_{0,n}, \varphi^{\gamma+m,\eta}_{0,k}}$ we make use of \eqref{eq1} giving the explicit expression of $\varphi^{\gamma,\eta}_{m,n}$.   
	This yields
	\begin{align*}
		\scal{  \varphi^{\gamma+m,\eta}_{0,n}, \varphi^{\gamma+m,\eta}_{0,k}} &= 
		\int_{\mathbb{D}} (1-|z|^2)^{\alpha-2(\gamma+m+1)} |z|^{2(\beta-2 \eta)}z^n \bz^k d\lambda(z) \\
		&=\pi\left( \int_{0}^1 (1-t)^{\alpha-2(\gamma+m+1)} t^{n+\beta-2 \eta}   dt\right) \delta_{n,k} 
		\\ &=\pi B(n+\beta-2 \eta+1, \alpha-2(\gamma+m)-1)
		\delta_{n,k},
	\end{align*}
	where $B(a,b)$ denotes the classical beta function. 
	The validity of the previous formula requires that $n>2 \eta-\beta-1$ and $\alpha-2\gamma-1>2m$ with $\alpha-2\gamma-1>0$. 
	Finally, since 
 $$ \prod_{\ell=1}^{m} 		E^{\gamma+\ell,\alpha}_{m-\ell}   = m!(\alpha-2(\gamma +m))_m = m! \frac{\Gamma(\alpha-2\gamma -m)}{\Gamma(\alpha-2(\gamma +m))} , $$ we arrive at   
	$$\scal{\psi^{\gamma,\eta}_{m,n},\psi^{\gamma,\eta}_{j,k}}
	=   \frac{\pi m!}{(\alpha-2(\gamma+m)-1)} 	 
	\frac{ \Gamma(\alpha-2\gamma -m)\Gamma(n+\beta-2\eta+1) }{\Gamma(n + \alpha +\beta -2(\gamma+\eta+m))}
	\delta_{m,j}\delta_{n,k}.$$ 
	This completes the proof.
\end{proof}

\begin{remark}\label{rembetaBerg}
The functions in \eqref{Zernike0} corresponding to $\gamma=-1$, $\eta=0$ and $m=0$ reduce further to  $ \psi^{-1,0}_{0,n}(z,\bz) =  z^n $, for varying integer $n > - (\beta+1)$, whose  square norm in $L^{2,\alpha}_\beta(\mathbb{D})$ is given by 
	$$ \norm{\psi^{-1,0}_{0,n}}_{\alpha,\beta}^2 
	=\pi 	 
	\frac{ \Gamma(\alpha+1 )\Gamma(n+\beta+1) }{\Gamma(n + \alpha +\beta +2)} .$$
	 They form an orthogonal basis of the $\beta$-modified Bergman space $\mathcal{A}^{2,\alpha}_{\beta}(\D)$ defined as the closed subspace in $L^{2,\alpha}_\beta(\mathbb{D})$ formed by the holomorphic functions on the punctured disc $\mathbb{D}^{*}$ (see \cite{GhanmiSnoun2021,GhiloufiSnoun2020
	} for details). 
	In other words, the $\beta$-modified Bergman space is the $L^2$-eigenspace of our magnetic Laplacian $ \mathcal{L}_{\gamma+1,\eta}^{\alpha,\beta,+}$ associated with its lowest Landau level.
	For the particular case of $\beta=0$ we recover the classical Bergman space on the unit disc with respect to the weight function being of the generalized Gegenbauer form $(1-|z|^2)^{\alpha}$. 
\end{remark}

\begin{remark}\label{remNoyOrth}
	The functions $\psi^{\gamma,\eta}_{m,n}$ do not form a complete system  in $L^{2,\alpha}_\beta(\mathbb{D})$.
	However, for fixed $m$ such that  $ 0\leq m < (\alpha-1 -2\gamma)/2$ and varying integer  $ n \geq  2\eta-\beta$ they span a specific closed subspace $\mathcal{A}^{2,\alpha}_{\beta,m}(\mathbb{D}) $ in $L^{2,\alpha}_{\beta}(\mathbb{D})$. This gives rise to what can be called the $m$-th generalized (or also poly-meromophic)  $\beta$-modified  Bergman space on $\mathbb{D}^{*}$ and can be seen  as  the polyanalytic analog of the $\beta$-modified Bergman space. Its reproducing kernel is given in Remark \ref{remRepKer} below.
\end{remark}

%
%
%
%


\section{Fractional Zernike functions}

In this section we provide an accurate  theoretical study for the fractional Zernike functions in \eqref{betZpol}.
 We discuss their connection to some special functions, zeros, orthogonality in $L^{2,\kappa}_{\rho}(\D)$, regularity, differential equations, recurrence and operational formulas. 
Some results concerning the generating functions, the integral representations and completeness are also obtained.

\subsection{Connection to special functions and explicit expression.}

We begin by establishing the explicit expression of the fractional Zernike functions $\mathcal{Z}^{\kappa,\rho}_{m,n}$ in terms of the classical Zernike polynomials.  
Thus, for given real $b$ and  nonnegative integer $m$ we define the infected minimum $m\wedge^* b$ to be
$$ m\wedge^* b = \left\{\begin{array}{ll}
	\min(m, b), &   \,  b=0,1,2, \cdots \\
	m, &  b\in \R , \, b\ne 0,1,2, \cdots  .\\
\end{array} \right.$$

\begin{proposition}\label{propZZ}
	For every $\rho >-1$ we have 
	\begin{align}\label{Zz} \mathcal{Z}^{\kappa,\rho}_{m,n}(z,\bz)&= \frac{ m!\Gamma(\rho+1)}{(\kappa+m+1)_n }  \sum_{j=0}^{m\wedge^* \rho}  \frac{ (-1)^{j} }{j!(m-j)!\Gamma(\rho-j+1)} 
		\frac{\left( 1-|z|^2  \right)^j}{z^{j}}   \mathcal{Z}^{\kappa+j}_{m-j,n}(z,\bz)   . 
	\end{align} 
\end{proposition}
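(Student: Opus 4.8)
The plan is to expand the single derivative $\partial^m/\partial z^m$ occurring in the definition \eqref{betZpol} by the Leibniz rule, after splitting off the fractional power, and then to recognise each lower-order derivative as a classical Zernike polynomial from \eqref{DiscePol}. Writing $z^{n+\rho}=z^{\rho}\,z^{n}$ and differentiating the product $z^{\rho}\cdot\big(z^{n}(1-|z|^2)^{\kappa+m}\big)$ gives
\begin{align*}
\frac{\partial^{m}}{\partial z^{m}}\Big(z^{n+\rho}(1-|z|^2)^{\kappa+m}\Big)
=\sum_{j=0}^{m}\binom{m}{j}\Big(\frac{\partial^{j}}{\partial z^{j}}z^{\rho}\Big)\,
\frac{\partial^{m-j}}{\partial z^{m-j}}\Big(z^{n}(1-|z|^2)^{\kappa+m}\Big).
\end{align*}
Since $\partial^{j}z^{\rho}=\big(\Gamma(\rho+1)/\Gamma(\rho-j+1)\big)z^{\rho-j}$, the reciprocal Gamma factor $1/\Gamma(\rho-j+1)$ vanishes precisely when $\rho$ is a nonnegative integer and $j>\rho$; this is exactly the mechanism that collapses the summation range to $j=0,\dots,m\wedge^{*}\rho$ and motivates the definition of the infected minimum.

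The key step is to turn the inner derivative into a disc polynomial. I would first record the auxiliary identity
\begin{align*}
\frac{\partial^{k}}{\partial z^{k}}\Big(z^{n}(1-|z|^2)^{\gamma+k}\Big)
=\frac{(-1)^{k}(1-|z|^2)^{\gamma}}{(\gamma+k+1)_{n}}\,\mathcal{Z}^{\gamma}_{k,n}(z,\bz),
\end{align*}
valid for real $\gamma>-1$ and nonnegative integers $k,n$; it follows immediately by comparing the definition \eqref{betZpol} at $\rho=0$ with the case $\ell=0$ of \eqref{frZcZ}. Applying this with $k=m-j$ and $\gamma=\kappa+j$, and using the bookkeeping $(\kappa+j)+(m-j)=\kappa+m$ so that the weight exponent matches and $(\gamma+k+1)_{n}=(\kappa+m+1)_{n}$, identifies the inner derivative with $\mathcal{Z}^{\kappa+j}_{m-j,n}(z,\bz)$.

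It then remains to substitute these two facts into \eqref{betZpol}, multiply by the prefactor $(-1)^{m}z^{-\rho}(1-|z|^2)^{-\kappa}$, and simplify the elementary pieces: the signs combine as $(-1)^{m}(-1)^{m-j}=(-1)^{j}$, the powers of $z$ as $z^{-\rho}z^{\rho-j}=z^{-j}$, and the powers of $1-|z|^2$ as $(1-|z|^2)^{-\kappa}(1-|z|^2)^{\kappa+j}=(1-|z|^2)^{j}$. Writing $\binom{m}{j}=m!/\big(j!(m-j)!\big)$ and extracting $m!\,\Gamma(\rho+1)$ and $(\kappa+m+1)_{n}^{-1}$ as global constants reproduces \eqref{Zz} verbatim; throughout $n$ is taken to be a nonnegative integer so that the disc polynomials on the right-hand side are defined, the general case being covered by the reduction to $0\le\rho<1$ noted before the definition.

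The computation is routine once the auxiliary identity is available; the one point that requires an argument rather than a calculation is the behaviour of the upper summation limit, namely that $1/\Gamma(\rho-j+1)$ truncates the sum at $\rho$ for nonnegative integer $\rho$ while leaving all $m+1$ terms when $\rho$ is non-integer. This dichotomy is exactly the content of $m\wedge^{*}\rho$ and is the only genuinely delicate part of the proof.
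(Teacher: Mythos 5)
Your proof is correct and takes essentially the same route as the paper: both split $z^{n+\rho}=z^{\rho}z^{n}$, expand $\partial^m/\partial z^m$ by the Leibniz rule so that $\partial^j z^{\rho}$ supplies the factor $\Gamma(\rho+1)/\Gamma(\rho-j+1)$ whose vanishing at nonnegative integer $\rho$, $j>\rho$ produces the truncation $m\wedge^*\rho$, and then identify the remaining derivative with $(1-|z|^2)^{\kappa+j}\,\mathcal{Z}^{\kappa+j}_{m-j,n}(z,\bz)/(\kappa+m+1)_n$ up to sign. The only cosmetic difference is bookkeeping: the paper first rewrites $z^n(1-|z|^2)^{\kappa+m}$ as $\frac{(-1)^n}{(\kappa+m+1)_n}\frac{\partial^n}{\partial \bz^n}(1-|z|^2)^{\kappa+m+n}$ and invokes the two-variable Rodrigues formula \eqref{DiscePol}, whereas you use the equivalent single-derivative identity obtained from \eqref{betZpol} at $\rho=0$ together with \eqref{frZcZ}, and the resulting computations coincide term by term.
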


\begin{proof}
	Using the facts	\eqref{derjmonomial}  and	 
	$$ z^n (1-|z|^2)^{\kappa+m} = \frac{(-1)^n}{(\kappa+m+1)_n} \frac{\partial^n}{\partial \bz^n} \left(  (1-|z|^2)^{\kappa+m+n}\right) 
	$$
	we get
	\begin{align*} \mathcal{Z}^{\kappa,\rho}_{m,n}(z,\bz)
		&= \frac{(-1)^{m+n}}{(\kappa+m+1)_n} z^{-\rho} (1-|z|^2)^{-\kappa}  \frac{\partial^m}{\partial z^m}\left(  z^\rho \frac{\partial^n}{\partial \bz^n} \left(  (1-|z|^2)^{\kappa+m+n}\right)\right)\\
		&= \frac{(-1)^{m+n}m!}{(\kappa+m+1)_n} z^{-\rho} (1-|z|^2)^{-\kappa}  \sum_{j=0}^m \frac{(-\rho)_{j}}{j!(m-j)!}    z^{\rho-j} \frac{\partial^{m-j+n}}{\partial z^{m-j} \partial \bz^n} \left(  (1-|z|^2)^{\kappa+j+m-j+n}\right),
	\end{align*}
	which can be rewritten as \eqref{Zz}.
\end{proof}

\begin{remark}\label{remZZ}
	For $\rho=0$ we recover the Zernike polynomials $\mathcal{Z}_{m,n}^\kappa (z,\bz)$ up to the multiplicative constant 
	$1/(\kappa+m+1)_{n}$, 	
	while when $\rho=1$ we get
	\begin{align*}   
		\mathcal{Z}_{m,n+1}^\kappa (z,\bz)&= (\kappa+m+n+1) \left( z         \mathcal{Z}^{\kappa}_{m,n}(z,\bz)	
		+m   \left( 1-|z|^2  \right)    \mathcal{Z}^{\kappa+1}_{m-1,n}(z,\bz)\right),
	\end{align*}
	which is exactly the three terms recurrence formula for the Zernike polynomials \cite[p. 403, Eq. (5.1)]{Aharmim2015}. This follows since that  $(-\rho)_j = 0$ for $j\geq \rho+1$ whenever $\rho = 0,1,2, \cdots$.
	More generally, from \eqref{frZcZ} with $\rho$ being a nonnegative integer we obtain new recurrence formula 
	 for the classical complex Zernike polynomials	
	\begin{align}  \mathcal{Z}_{m,n+\rho}^\kappa (z,\bz) 
		&	=  m!\Gamma(\rho+1) (\kappa+m+n+1)_\rho   \sum_{j=0}^{m\wedge \rho}  \frac{ (-1)^{j} z^{\rho-j}\left( 1-|z|^2  \right)^j}{j!(m-j)!\Gamma(\rho-j+1)}    \mathcal{Z}^{\kappa+j}_{m-j,n}(z,\bz). 
	\end{align}
\end{remark}

The explicit expression of the few first terms of $\mathcal{Z}^{\kappa,\rho}_{m,n}(z,\bz)$ can be computed easily from the Rodrigues formula \eqref{betZpol} or also using \eqref{propZZ}. Thus, those corresponding to $m=0$ reduce to the monomials, $\mathcal{Z}^{\kappa,\rho}_{0,n}(z,\bz)= z^n $.
For $m=1$ and $m=2$ we get respectively  
\begin{align*}
	\mathcal{Z}^{\kappa,\rho}_{1,n}(z,\bz)
	&	=   (\kappa+n_\rho+1) \bz z^n    - n_\rho z^{n-1}   
\end{align*}
and 
\begin{align*}
	\mathcal{Z}^{\kappa,\rho}_{2,n}(z,\bz)
	&= (\kappa+n_\rho+1)(\kappa+n_\rho+2)\bz^{2}z^{n}  - 2 n_\rho  (\kappa+n_\rho+1)\bz z^{n-1}   +   n_\rho(n_\rho-1)z^{n-2}    ,
\end{align*}
where we have set $n_\rho = n+\rho$. 
A general formula for the explicit expression of $\mathcal{Z}^{\kappa,\rho}_{m,n}(z,\bz)$ is given by the following assertion.

\begin{proposition}\label{propZexp}	
	We have
	\begin{align}\label{Zmnexp} \mathcal{Z}^{\kappa,\rho}_{m,n}(z,\bz)&= \sum_{j=0}^{m\wedge^*(n+\rho)}   \frac{(-1)^{j}  m! \Gamma(n+\rho+1)\Gamma(\kappa+m+1)  }{j! (m-j)!\Gamma(n+\rho-j+1) \Gamma(\kappa+j+1) } \left(  1-|z|^2\right)^{j} z^{n-j }  \bz^{m-j} .  
	\end{align}
\end{proposition}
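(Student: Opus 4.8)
The plan is to expand the Rodrigues-type formula \eqref{betZpol} directly by means of the Leibniz rule, treating the $m$-th holomorphic derivative $\partial^m/\partial z^m$ in the Wirtinger sense so that $\bz$ is held constant throughout. Writing $b=n+\rho$ and applying Leibniz to the product $z^{b}(1-|z|^2)^{\kappa+m}=z^{b}(1-z\bz)^{\kappa+m}$ gives
$$\frac{\partial^m}{\partial z^m}\big(z^{b}(1-z\bz)^{\kappa+m}\big)=\sum_{j=0}^{m}\binom{m}{j}\Big(\frac{\partial^j}{\partial z^j}z^{b}\Big)\Big(\frac{\partial^{m-j}}{\partial z^{m-j}}(1-z\bz)^{\kappa+m}\Big).$$
First I would evaluate the two derivative factors separately and then reinsert the prefactor.

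For the monomial factor, formula \eqref{derjmonomial} yields $\partial^j z^b/\partial z^j=\big(\Gamma(b+1)/\Gamma(b-j+1)\big)z^{b-j}$, i.e. the falling factorial $b(b-1)\cdots(b-j+1)$. For the weight factor, differentiating $(1-z\bz)^{\kappa+m}$ in $z$ with $\bz$ frozen produces a factor $-\bz$ at each step, so after $m-j$ steps one obtains $\big(\Gamma(\kappa+m+1)/\Gamma(\kappa+j+1)\big)(-\bz)^{m-j}(1-z\bz)^{\kappa+j}$. Substituting both expressions into the Leibniz sum and then reinstating the prefactor $(-1)^m z^{-\rho}(1-|z|^2)^{-\kappa}$ from \eqref{betZpol}, the powers collapse cleanly: $z^{-\rho}z^{b-j}=z^{n-j}$, $(1-|z|^2)^{-\kappa}(1-|z|^2)^{\kappa+j}=(1-|z|^2)^j$, and the signs combine as $(-1)^m(-1)^{m-j}=(-1)^j$. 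Collecting $\binom{m}{j}=m!/(j!(m-j)!)$ together with the two Gamma ratios reproduces exactly the coefficient displayed in \eqref{Zmnexp}.

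The only point requiring genuine care is the upper limit of summation, which is where the infected minimum $m\wedge^*(n+\rho)$ enters. The truncation is forced entirely by the monomial factor: the falling factorial $\Gamma(b+1)/\Gamma(b-j+1)$ vanishes precisely when $b=n+\rho$ is a nonnegative integer and $j>b$, since then $b-j+1\leq 0$ lands on a pole of $\Gamma$ (equivalently, the product $b(b-1)\cdots(b-j+1)$ acquires the factor $0$). In that case the effective range of $j$ shrinks to $0\leq j\leq \min(m,b)$, which is exactly $m\wedge^* b$ by definition; when $n+\rho$ is not a nonnegative integer the ratio never vanishes and the sum runs to $j=m=m\wedge^*(n+\rho)$. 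I expect this bookkeeping of the truncation, together with keeping the Wirtinger convention ($\bz$ constant under $\partial/\partial z$) consistent, to be the only subtle part; the remainder is a routine simplification of powers and signs.
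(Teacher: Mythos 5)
Your proposal is correct and follows essentially the same route as the paper: the paper's proof also expands the Rodrigues formula \eqref{betZpol} by the Leibniz rule, evaluates the two derivative factors exactly as you do (via \eqref{derjmonomial} and the derivative of $(1-z\bz)^{\kappa+m}$), and encodes your truncation argument through the symbol $\varepsilon^*_{n+\rho,j}$, which vanishes precisely when $n+\rho$ is a nonnegative integer and $j>n+\rho$, yielding the upper limit $m\wedge^*(n+\rho)$. Your explicit discussion of the Wirtinger convention and of where the falling factorial vanishes is the same bookkeeping the paper performs, just spelled out more verbosely.
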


\begin{proof}
	Using the fact  $(x)^{\underline{n}} = \Gamma(x+1)/\Gamma(x-n+1)$ for the decreasing factorial $(x)^{\underline{n}} = x(x-1)\cdots (x-n+1)$, we obtain
	$$ \frac{\partial^m}{\partial z^m}  \left(  1-xz\right)^{a}  
	= (-1)^m \frac{\Gamma(a+1)}{\Gamma(a+1-m)}x^m \left(  1-xz\right)^{a-m} $$
	and 
	\begin{align}\label{derjmonomial}
		\frac{\partial^m}{\partial z^m}  \left( z^{a} \right) 
		= (-a)_m  z^{a-m} = \varepsilon^*_{a,m}  \frac{\Gamma(a+1)}{\Gamma(a+1-m)} z^{a-m},
	\end{align}
	where for the nonnegative integer $m$ we have set
	$$\varepsilon^*_{a,m} = \left\{\begin{array}{lll}
		1, & a\geq m; \,  a=0,1, \cdots \\
		0, & a< m; \,  a=0,1, \cdots\\
		1, &  a\in \R, \, a \ne 0,1, \cdots. \\
	\end{array} \right.$$
	Thus, applying the Leibnitz formula for high order derivation of a product yields
	\begin{align}
		\mathcal{Z}^{\kappa,\rho}_{m,n}(z,\bz) &=  		(-1)^m z^{-\rho} (1-|z|^2)^{-\kappa} \frac{\partial^m}{\partial z^m}  \big( z^{n+\rho} (1-|z|^2)^{\kappa+m}\big)\nonumber\\
		&=    \sum_{j=0}^m \varepsilon^*_{n+\rho,j} \frac{(-1)^{j} m!\Gamma(n+\rho+1)\Gamma(\kappa+m+1)  }{j!(m-j)!  \Gamma(n+\rho-j+1) \Gamma(\kappa+j+1)} \bz^{m-j} z^{n-j}  \left(  1-|z|^2\right)^{ j}  . \nonumber 
	\end{align} 
	This gives rise to \eqref{Zmnexp}.
\end{proof}

Below, we present different hypergeometric representations of $\mathcal{Z}^{\kappa,\rho}_{m,n}$ in terms of the Gauss hypergeometric function  defined on the open unit disc by  power series
$${_2F_1}\left( \begin{array}{c} a , b \\ c \end{array}\bigg | z \right) = \sum_{n=0}^{\infty }{\frac {(a)_{n}(b)_{n}}{(c)_{n}}}{\frac {z^{n}}{n!}} $$
provided that $c \ne 0, -1, -2, \cdots$.

\begin{proposition}\label{proppsiHyperg}
	The functions $\mathcal{Z}^{\kappa,\rho}_{m,n}(z,\bz)  $ are given in terms of the ${_2F_1}$ function by 
	\begin{align}\label{Zhyper} \mathcal{Z}^{\kappa,\rho}_{m,n}(z,\bz)&=(\kappa+1)_m z^{n }  \bz^{m} {_2F_1}\left( \begin{array}{c} -m , -n-\rho \\ \kappa+1 \end{array}\bigg | 1-\frac{1}{|z|^2} \right) . 
	\end{align} 
\end{proposition}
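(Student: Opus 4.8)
The plan is to read off the hypergeometric representation directly from the explicit expansion obtained in Proposition \ref{propZexp}, by repackaging the Gamma-function ratios into Pochhammer symbols and recognizing the resulting finite sum as a terminating ${}_2F_1$ series. The starting point is the identity
\begin{align*}
\mathcal{Z}^{\kappa,\rho}_{m,n}(z,\bz)= \sum_{j=0}^{m\wedge^*(n+\rho)}   \frac{(-1)^{j}  m! \Gamma(n+\rho+1)\Gamma(\kappa+m+1)  }{j! (m-j)!\Gamma(n+\rho-j+1) \Gamma(\kappa+j+1) } \left(  1-|z|^2\right)^{j} z^{n-j }  \bz^{m-j} .
\end{align*}
First I would factor out the leading monomial $z^n\bz^m$ together with the constant $(\kappa+1)_m=\Gamma(\kappa+m+1)/\Gamma(\kappa+1)$, so that each summand becomes proportional to $(1-|z|^2)^j z^{-j}\bz^{-j}$.

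Second, I would rewrite the variable. Since $z\bz=|z|^2$, we have $(1-|z|^2)z^{-1}\bz^{-1}=(1-|z|^2)/|z|^2=-(1-1/|z|^2)$, whence $(1-|z|^2)^j z^{-j}\bz^{-j}=(-1)^j(1-1/|z|^2)^j$. This produces exactly the argument $w=1-1/|z|^2$ of the target hypergeometric function and contributes a sign $(-1)^j$ that cancels the $(-1)^j$ already present in the explicit formula.

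Third, I would convert the remaining Gamma ratios into Pochhammer symbols using $\Gamma(x+1)/\Gamma(x-j+1)=(x)^{\underline{j}}$ together with the standard identities $(-m)_j=(-1)^j m!/(m-j)!$, $(-n-\rho)_j=(-1)^j\Gamma(n+\rho+1)/\Gamma(n+\rho-j+1)$ and $(\kappa+1)_j=\Gamma(\kappa+j+1)/\Gamma(\kappa+1)$. A short bookkeeping of the signs then shows that the $j$-th coefficient equals $(\kappa+1)_m\,(-m)_j(-n-\rho)_j/[(\kappa+1)_j\,j!]$, so the sum is precisely $(\kappa+1)_m z^n\bz^m\,{}_2F_1(-m,-n-\rho;\kappa+1\,|\,1-1/|z|^2)$, which is \eqref{Zhyper}.

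Finally, the only point requiring care---rather than genuine difficulty---is the matching of the summation range. The formal ${}_2F_1$ series runs over all $j\geq 0$, but the factor $(-m)_j$ vanishes for $j>m$ and, when $n+\rho$ is a nonnegative integer, $(-n-\rho)_j$ vanishes for $j>n+\rho$. This reproduces exactly the cutoff $m\wedge^*(n+\rho)$ built into the explicit expression through the symbol $\varepsilon^*_{n+\rho,j}$, so the terminating series and the finite sum agree term by term and no convergence issue arises for $|z|<1$. I expect this truncation bookkeeping to be the main (mild) obstacle, the rest being the routine sign and Gamma manipulation described above.
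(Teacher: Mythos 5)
Your proof is correct and takes essentially the same route as the paper's: both start from the explicit expansion \eqref{Zmnexp} of Proposition \ref{propZexp}, convert the Gamma ratios into Pochhammer symbols via $(-m)_j=(-1)^j m!/(m-j)!$ and $(-n-\rho)_j=(-1)^j\Gamma(n+\rho+1)/\Gamma(n+\rho-j+1)$, and use $(1-|z|^2)^j z^{-j}\bz^{-j}=(-1)^j\left(1-\tfrac{1}{|z|^2}\right)^j$ to recognize the terminating ${}_2F_1$ at argument $1-1/|z|^2$. Your extra remark matching the cutoff $m\wedge^*(n+\rho)$ with the vanishing of $(-m)_j$ and $(-n-\rho)_j$ is accurate and merely makes explicit what the paper leaves implicit.
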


\begin{proof}
	By means of $(a)^{\underline{n}}= (-1)^n (-a)_n = \Gamma(a+1)/\Gamma(a-n+1)$ combined with $ (-1)^j(-m)_j(m-j)!=m!$, we can rewrite \eqref{Zmnexp} as
	\begin{align*}
		\mathcal{Z}^{\kappa,\rho}_{m,n}(z,\bz)  
		&=   (\kappa+1)_m z^{n }  \bz^{m} \sum_{j=0}^{m\wedge^*(n+\rho)}    	    \frac{(-m)_j(-n-\rho)_j  }{(\kappa+1)_j j!}   \left(  1-\frac{1}{|z|^2}\right)^{j} \\
		&=   (\kappa+1)_m z^{n }  \bz^{m} {_2F_1}\left( \begin{array}{c} -m , -n-\rho \\ \kappa+1 \end{array}\bigg | 1-\frac{1}{|z|^2} \right).
	\end{align*} 
\end{proof}

There are several equivalent expressions for $ \mathcal{Z}^{\kappa,\rho}_{m,n}$ in terms of the Gauss hypergeometric functions which follow from the well-known linear transformations for ${_2F_1}$. Thus, from the second and the third  ones in \cite[$\S$ 2.4., p. 47]{Magnus1966}, 
it follows
\begin{align}\label{Zhyp2}
	\mathcal{Z}^{\kappa,\rho}_{m,n}(z,\bz) = (\kappa+1)_m z^{n-m }     {_2F_1}\left( \begin{array}{c} -m , n+ \kappa+\rho+1 \\ \kappa+1 \end{array}\bigg | 1- |z|^2 \right) 	\end{align}
and
\begin{align}\label{Zhyp2d}
	\mathcal{Z}^{\kappa,\rho}_{m,n}(z,\bz) = (\kappa+1)_m z^{-\rho}\bz^{m-n-\rho }     {_2F_1}\left( \begin{array}{c} -n-\rho,  \kappa+m+1 \\ \kappa+1 \end{array}\bigg | 1- |z|^2 \right) .	\end{align}
However,  starting from \eqref{Zhyper} and applying the linear transformation  \cite[Eq. (15.8.7)]{NIST} for the limiting case one obtains 
\begin{align}
	\mathcal{Z}^{\kappa,\rho}_{m,n}(z,\bz) &=
(\kappa+\rho+n+1)_m z^{n }  \bz^{m}    {_2F_1}\left({-m,-n-\rho \atop -\kappa-\rho-n-m}\bigg | \frac{1}{|z|^2}\right).
\end{align}
The same transformation applied to \eqref{Zhyp2} gives rise to
\begin{align} \label{ZhypF}
	\mathcal{Z}^{\kappa,\rho}_{m,n}(z,\bz) &= (-n-\rho)_m
	z^{n-m }      {_2F_1}\left({-m, \kappa+n+\rho+1 \atop n+ \rho  -m+1 }\bigg | |z|^2 \right).
\end{align} 
The latter one remains valid for $\rho$ being integer and $m \leq n+\rho$.



The next result is concerned with the expression of $\mathcal{Z}^{\kappa,\rho}_{m,n}(z,\bz)  $ in terms of the the real Jacobi polynomials.  

\begin{proposition}\label{proppsiJac}
	For $\rho = 0,1, \cdots$, we have 
	\begin{align}\label{ZJacobimin}
	\mathcal{Z}^{\kappa,\rho}_{m,n}(z,\bz) 
		&=  	 	 \frac{(\kappa+1)_{m\vee (n+\rho)} (m\wedge (n+\rho))! }{(\kappa+1)_{n+\rho}}  	
		\frac{z^{n}  \bz^{m}}{|z|^{2(m\wedge (n+\rho)) }}  
		P^{(\kappa,|m-n-\rho| )}_{m\wedge(n+\rho)}(2|z|^2-1),
	\end{align}	
while when  $\rho>-1$ is non-integer  or $m= m\wedge (n+\rho)$  we have
	\begin{align}\label{ZJacobi}
		\mathcal{Z}^{\kappa,\rho}_{m,n}(z,\bz) 
		&=  	 m!  z^{n-m} P_m^{(\kappa,n-m+\rho )}(2|z|^2-1) .
	\end{align}	 
\end{proposition}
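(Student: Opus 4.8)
The plan is to derive both identities directly from the Gauss hypergeometric representations \eqref{Zhyp2} and \eqref{Zhyp2d} already at hand, by matching them against the standard terminating hypergeometric form of the Jacobi polynomials,
\begin{align*}
	P_k^{(a,b)}(x) = \frac{(a+1)_k}{k!} {_2F_1}\left( \begin{array}{c} -k , k+a+b+1 \\ a+1 \end{array}\bigg | \frac{1-x}{2} \right).
\end{align*}
The key observation is that the substitution $x = 2|z|^2-1$ turns the argument into $\frac{1-x}{2} = 1-|z|^2$, which is exactly the argument appearing in \eqref{Zhyp2} and \eqref{Zhyp2d}.

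First I would establish \eqref{ZJacobi}. Taking $k=m$, $a=\kappa$ and $b=n-m+\rho$ above, the Jacobi representation reads $P_m^{(\kappa,n-m+\rho)}(2|z|^2-1) = \frac{(\kappa+1)_m}{m!}\,{_2F_1}(-m,\, n+\kappa+\rho+1;\, \kappa+1 \,|\, 1-|z|^2)$, whose hypergeometric factor coincides precisely with the one in \eqref{Zhyp2}. Substituting and cancelling $(\kappa+1)_m$ yields $\mathcal{Z}^{\kappa,\rho}_{m,n}(z,\bz) = m!\, z^{n-m}\, P_m^{(\kappa,n-m+\rho)}(2|z|^2-1)$. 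This manipulation is legitimate exactly when the second Jacobi parameter $n-m+\rho$ is not a negative integer, i.e. when $\rho$ is non-integer or when $m \leq n+\rho$ (equivalently $m = m\wedge(n+\rho)$), which is the stated range of validity.

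Next I would treat \eqref{ZJacobimin} for $\rho$ a nonnegative integer (so that $m\wedge^* b = m\wedge b$), splitting on the sign of $m-(n+\rho)$. When $m\leq n+\rho$ one has $m\wedge(n+\rho)=m$, $m\vee(n+\rho)=n+\rho$ and $|m-n-\rho|=n+\rho-m$; using $z^n\bz^m/|z|^{2m}=z^{n-m}$ and cancelling $(\kappa+1)_{n+\rho}$, the right-hand side of \eqref{ZJacobimin} collapses to $m!\,z^{n-m}\,P_m^{(\kappa,n-m+\rho)}(2|z|^2-1)$, i.e. back to \eqref{ZJacobi}, which was already proven in this range. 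When $m>n+\rho$ one has $m\wedge(n+\rho)=n+\rho$, $m\vee(n+\rho)=m$ and $|m-n-\rho|=m-n-\rho$; here I would invoke \eqref{Zhyp2d} instead, matching its hypergeometric factor against the Jacobi representation with $k=n+\rho$, $a=\kappa$, $b=m-n-\rho$, so that $P_{n+\rho}^{(\kappa,m-n-\rho)}(2|z|^2-1) = \frac{(\kappa+1)_{n+\rho}}{(n+\rho)!}\,{_2F_1}(-n-\rho,\, \kappa+m+1;\, \kappa+1 \,|\, 1-|z|^2)$. Combining this with $z^n\bz^m/|z|^{2(n+\rho)}=z^{-\rho}\bz^{m-n-\rho}$ reproduces \eqref{ZJacobimin} after cancelling $(\kappa+1)_{n+\rho}$.

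The only delicate point is the degree-lowering for integer $\rho$: when $m>n+\rho$ the Gauss series in \eqref{Zhyper} terminates after $n+\rho+1$ terms because $(-n-\rho)_j=0$ for $j>n+\rho$, so $\mathcal{Z}^{\kappa,\rho}_{m,n}$ is genuinely a Jacobi polynomial of degree $n+\rho$ rather than $m$, and \eqref{ZJacobi} with its spurious negative parameter $n-m+\rho$ no longer applies. This is exactly what forces the $m\vee/m\wedge$ bookkeeping and the passage to the reflected representation \eqref{Zhyp2d}; verifying that the Pochhammer and factorial prefactors match consistently across the two cases is the main (though entirely routine) computational obstacle.
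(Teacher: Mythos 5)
Your proposal is correct and takes essentially the same approach as the paper: the paper likewise obtains \eqref{ZJacobi} by matching \eqref{Zhyp2} against the Jacobi hypergeometric representation \eqref{jacobi}, and for integer $\rho$ with $m>n+\rho$ it notes the result is immediate from \eqref{Zhyp2d} (its stated derivation there via the symmetry relation \eqref{Sym} is equivalent), before unifying both cases into \eqref{ZJacobimin}. Your prefactor bookkeeping and your explanation of the degree-lowering (termination of the series in \eqref{Zhyper} at $n+\rho$ for integer $\rho$) agree with the paper's case analysis.
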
 

\begin{proof}	
	Consider first the case of $\rho = 0,1, \cdots$. Starting from  \eqref{Zhyp2} and the assumption  $m \leq n+\rho$ one can make use of the facts $(-n-\rho)_m =(-1)^m (n+ \rho  -m+1)_m$, $ 
	P^{(a,b)}_m(x)= (-1)^m P^{(b,a)}_m(-x)$  and 
		\begin{align} \label{jacobi} {_2F_1}\left( \begin{array}{c} -m , m+a+b+1 \\ a+1 \end{array}\bigg | x\right) = \frac{m!}{(a+1)_m} P^{(a,b)}_m(1-2x)
	\end{align}
	to get   
\begin{align}
	\mathcal{Z}^{\kappa,\rho}_{m,n}(z,\bz)
	&= m!  z^{n-m }  
	P^{(\kappa,n+ \rho  -m)}_m(2|z|^2-1). 
\end{align} 
	%
The result corresponding to the case $m\geq n+\rho$ is immediate from the previous one using the like-symmetry relationship \eqref{Sym} (it is also immediate from \eqref{Zhyp2d}). In fact, we have 
\begin{align} 
	\mathcal{Z}^{\kappa,\rho}_{m,n}(z,\bz)
&	=   \frac{(\kappa+1)_m (n+\rho)! }{(\kappa+1)_{n+\rho}}  z^{-\rho} 	
   \bz^{m-n-\rho }  
P^{(\kappa,m   -n-\rho)}_{n+\rho}(2|z|^2-1).
\end{align} 
To conclude one observes that both expressions can rewritten in the  unified form 
\begin{align*}
	\mathcal{Z}^{\kappa,\rho}_{m,n}(z,\bz)  
& = \frac{(\kappa+1)_{m\vee (n+\rho)} (m\wedge (n+\rho))! }{(\kappa+1)_{n+\rho}}  |z|^ {|m-n-\rho| -\rho } 	
e^{i(n-m)\arg z }  
P^{(\kappa,|m-n-\rho| )}_{m\wedge(n+\rho)}(2|z|^2-1) ,
\end{align*} 
which clearly leads to \eqref{ZJacobimin}.
This can also be deduced from 
	the functions $z^\rho \mathcal{Z}^{\kappa,\rho}_{m,n}$ being the classical Zernike functions in \eqref{DiscePol} up to multiplicative constant and next making appeal to the one in  \cite{Ghanmi2010}.

	%
	%
	%
	%
	%
	%
Finally, the formula \eqref{ZJacobi} for $\rho$ being non-integer is exactly \eqref{ZJacobimin}  (when $\rho$ integer and $m\leq n+\rho$). It  can be obtained by means of  \eqref{Zhyp2} and \eqref{jacobi}
valid whenever $m= m\wedge ^* (n+\rho)$, which corresponds to $\rho>-1$ being non-integer  or also to $m= m\wedge (n+\rho)$.
A direct proof can be handled starting from the derivation formula \cite[p. 1, 1.1.2 Eq. 2]{Brychkov2008} that we can rewrite as  
	\begin{align}
		\frac{d^m}{d z^m} \left( z ^a (1-xz)^n  \right) = m! z^{a-m} (1-xz)^{b-m} P_m^{(a-m,b-m)}(1-2xz) .
	\end{align}
	Therefore, with the specification $a=n+\rho$, $b=\kappa+m$ and $x=\bz$, the expression of $\mathcal{Z}^{\kappa,\rho}_{m,n}$ in \eqref{betZpol} becomes \eqref{ZJacobi}.
\end{proof}
 
The next result concerning the zeros of $	\mathcal{Z}^{\kappa,\rho}_{m,n}$ is an immediate consequence of Proposition \ref{proppsiJac}.

\begin{corollary}\label{corzeros}
The point $0$ is a zero of $	\mathcal{Z}^{\kappa,\rho}_{m,n}$ when $\rho=0,1,2,\cdots$ if and only if $n>m$ or $m >n$ with $\rho=0$. However, when $\rho$ is non-integer, it is a zero of $	\mathcal{Z}^{\kappa,\rho}_{m,n}$ only when $n>m$. 
 The other zeros are the circles centered at the origin with radii $(1+r_{m,n}^{\kappa,\rho})/2$, where $r_{m,n}^{\kappa,\rho}$ are 
 the zeros, located at the segment $(0,1)$, of the real Jacobi polynomials 
 $P_m^{(\kappa,n-m+\rho )}(x)$ when $\rho$ is non-integer and $P_{m\wedge (n+\rho)}^{(\kappa,|n-m+\rho| )}(x)$ when $\rho$ is a non-negative integer. 
\end{corollary}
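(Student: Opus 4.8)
The plan is to read the zero set of $\mathcal{Z}^{\kappa,\rho}_{m,n}$ directly off the Jacobi representations of Proposition \ref{proppsiJac}, separating the analysis on $\mathbb{D}^*$ from the behaviour at the puncture $z=0$. The only external input I would invoke is the classical localisation of Jacobi zeros: for parameters $a,b>-1$ the polynomial $P_k^{(a,b)}$ has exactly $k$ simple zeros, all contained in $(-1,1)$, and in particular $P_k^{(a,b)}(-1)=(-1)^k\binom{k+b}{k}\neq 0$. Here the first parameter is $a=\kappa>-1$, while the second is $b=n-m+\rho$ in \eqref{ZJacobi} and $b=|m-n-\rho|\ge 0$ in \eqref{ZJacobimin}; in the ranges that matter both exceed $-1$, so the statement applies.

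First I would dispose of the zeros on $\mathbb{D}^*$. In either representation $\mathcal{Z}^{\kappa,\rho}_{m,n}$ is a monomial (resp. power) prefactor in $z,\bz$ that is nonvanishing for $z\neq 0$, multiplied by $P^{(\kappa,\,\cdot\,)}_{k}(2|z|^2-1)$ with $k=m$ for non-integer $\rho$ and $k=m\wedge(n+\rho)$ for integer $\rho$. Hence on $\mathbb{D}^*$ the zeros are exactly the solutions of $P^{(\kappa,\,\cdot\,)}_{k}(2|z|^2-1)=0$. Each zero $x\in(-1,1)$ of the relevant Jacobi polynomial forces $|z|^2=(1+x)/2\in(0,1)$, i.e. one centred circle per zero; this produces the $m$ (resp. $m\wedge(n+\rho)$) circles of the statement, with radii encoded by the zeros $r_{m,n}^{\kappa,\rho}$ of $P_m^{(\kappa,n-m+\rho)}$ (resp. $P_{m\wedge(n+\rho)}^{(\kappa,|n-m+\rho|)}$).

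It then remains to decide whether the puncture is a zero, and this is the delicate step. Since $P^{(\kappa,\,\cdot\,)}_{k}(2|z|^2-1)\to P^{(\kappa,\,\cdot\,)}_{k}(-1)\neq 0$ as $z\to0$, the behaviour at the origin is controlled entirely by the modulus of the prefactor, which is the pure power $|z|^{|m-n-\rho|-\rho}$ (carrying an angular factor $e^{i(n-m)\arg z}$). Thus $0$ is a genuine zero precisely when this exponent is strictly positive, a pole when it is negative, and a non-removable singularity when it vanishes with $m\neq n$. For $m\le n+\rho$ --- automatic when $n\ge m$, and the only case for non-integer $\rho$ through \eqref{ZJacobi} --- the exponent equals $n-m$, so $0$ is a zero iff $n>m$, with the nonzero limit $m!\,P_m^{(\kappa,\rho)}(-1)$ when $n=m$. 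For integer $\rho$ with $m>n+\rho$ the prefactor is $z^{-\rho}\bz^{\,m-n-\rho}$ and the exponent becomes $m-n-2\rho$; when $\rho=0$ this equals $m-n>0$ and yields the zero recorded in the statement.

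The main obstacle is exactly this last discussion. The circular zeros fall out at once from the localisation of Jacobi zeros in $(-1,1)$, whereas at the puncture one must carefully track the net power of $z$ across the regimes (integer versus non-integer $\rho$, and $m\le n+\rho$ versus $m>n+\rho$) and verify non-vanishing of the Jacobi factor at $x=-1$; the integer regime $m>n+\rho$, where the sign of $m-n-2\rho$ governs the outcome, is the subtle point underlying the dichotomy $\{n>m\}\cup\{m>n,\ \rho=0\}$.
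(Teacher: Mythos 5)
There is a genuine gap, and it sits precisely where you yourself flag ``the subtle point'': the integer regime $m>n+\rho$. You correctly compute the modulus exponent of the prefactor there as $m-n-2\rho$, but your stated criterion --- ``$0$ is a genuine zero precisely when this exponent is strictly positive'' --- then forces $0$ to be a zero whenever $m-n>2\rho$, which contradicts the corollary. Concretely, take $\rho=1$, $n=0$, $m=4$: by \eqref{Zmnexp},
\begin{equation*}
\mathcal{Z}^{\kappa,1}_{4,0}(z,\bz)=(\kappa+1)_4\,\bz^4-4(\kappa+2)_3\,(1-|z|^2)\frac{\bz^3}{z},
\end{equation*}
whose modulus is $O(|z|^2)$ as $z\to 0$, so your criterion declares $0$ a zero, while the corollary asserts that for $\rho=1,2,\cdots$ the origin is a zero only when $n>m$. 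The resolution, absent from your write-up, is that the corollary counts the origin according to the poly-meromorphic classification made precise in Theorem \ref{thmAnaMeremor}: for $m>n+\rho$ with $\rho=1,2,\cdots$ the singular part $z^{-\rho}\bz^{m-(n+\rho)}S^{\kappa,\rho,m,n}_{n+\rho}(|z|^2)$ has nonzero constant term, so the meromorphic coefficients carry a pole of order $\rho$ at $0$, and $0$ is a pole --- not a zero --- regardless of the fact that $|\mathcal{Z}^{\kappa,\rho}_{m,n}(z,\bz)|\to 0$ when $m-n>2\rho$. You must either adopt that convention explicitly (and then the sign of $m-n-2\rho$ is irrelevant: what matters is regularity at $0$, i.e.\ $\rho=0$ or $m\le n$, together with vanishing of the monomial prefactor) or accept that your criterion yields a dichotomy strictly larger than $\{n>m\}\cup\{m>n,\ \rho=0\}$. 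As written, you verify only the $\rho=0$ subcase of this regime and then assert the full dichotomy, so the argument is internally inconsistent with the statement it claims to prove.

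A secondary flaw: your appeal to the classical localisation of Jacobi zeros requires both parameters to exceed $-1$, and your assertion that this holds ``in the ranges that matter'' is false for \eqref{ZJacobi} with non-integer $\rho$. The second parameter $n-m+\rho$ drops to $\le -1$ as soon as $m\ge n+2$ (e.g.\ $n=0$, $\rho=1/2$, $m=3$ gives $n-m+\rho=-5/2$), and for such non-classical parameters the $m$ roots of $P_m^{(\kappa,n-m+\rho)}$ need not all lie in $(-1,1)$. The correct assertion --- which is what the corollary's phrase ``located at the segment $(0,1)$'' silently encodes --- is only that the zero circles in $\D^*$ correspond exactly to those roots $x$ with $x\in(-1,1)$, equivalently $(1+x)/2\in(0,1)$, with no claim on their number; your first paragraph should be weakened accordingly. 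For calibration: the paper offers no written proof of this corollary (it is declared an immediate consequence of Proposition \ref{proppsiJac}), so these are gaps in your argument relative to the statement itself rather than deviations from the paper's route; the circle part of your proof is the intended one, but the analysis at the puncture needs the repair above.
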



\begin{remark}\label{rembetaZJacobi}
	According to Proposition \ref{proppsiJac}, the expression of the $\beta$-restricted Zernike functions $\psi^{\gamma,\eta}_{m,n}$ in terms of the Jacobi polynomials reads 
	\begin{align}\label{psiJacobi}
		\psi^{\gamma,\eta}_{m,n}(z,\bz)
		&=	(-1)^m m!  \frac{z^{n-m}}{|z|^{2\eta}} (1-|z|^2)^{\frac{\kappa-\alpha-1}{2}}
		P_m^{(n-m+\rho,\kappa)} (1-2|z|^2),
	\end{align}
	where $\rho = \beta-2\eta>-1$ is non-integer and $\kappa= \alpha-2(\gamma+m)-1$.
\end{remark} 

We conclude this subsection by discussing the orthogonality of the considered functions.

\begin{corollary}\label{corOrthnorm} The functions $\mathcal{Z}^{\kappa,\rho}_{m,n}$ form an orthogonal system  in $L^{2,\kappa}_{\rho}(\D)$ with square norm given by 
	\begin{align} \label{normFrZ}
		\norm{\mathcal{Z}^{\kappa,\rho}_{m,n}}_{L^{2,\kappa}_{\rho}(\D)}^2 = \frac{\pi m!  (n+\rho)!\Gamma (m+\kappa +1)}{(m+n+\rho +\kappa +1)\Gamma (n+\rho +\kappa +1) } 
		%
		=: \frac{1}{\gamma^{\kappa,\rho}_{m,n}}
	\end{align}

\end{corollary}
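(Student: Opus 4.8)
The plan is to derive both the orthogonality and the square norm directly from the Jacobi representation in Proposition \ref{proppsiJac}, combined with the classical orthogonality relation for the real Jacobi polynomials on $[-1,1]$. First I would pass to polar coordinates $z=re^{i\theta}$, so that $d\mu_{\kappa,\rho}(z)=(1-r^2)^\kappa r^{2\rho+1}\,dr\,d\theta$. By \eqref{ZJacobi} (respectively \eqref{ZJacobimin} when $\rho$ is a nonnegative integer and $m>n+\rho$), each fractional Zernike function splits as a power of $r$ times $e^{i(n-m)\theta}$ times a real Jacobi polynomial in the variable $2r^2-1$. The angular integral $\int_0^{2\pi}e^{i(n-m)\theta}e^{-i(n'-m')\theta}\,d\theta = 2\pi\,\delta_{n-m,\,n'-m'}$ then forces the inner product to vanish unless $n-m=n'-m'$.

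Next, fixing the angular frequency $k:=n-m=n'-m'$, I would carry out the radial integral via the substitution $x=2r^2-1$, which maps $(0,1)$ onto $(-1,1)$ and turns $r^{2\rho}(1-r^2)^\kappa r\,dr$ into a constant multiple of $(1+x)^{k+\rho}(1-x)^\kappa\,dx$ (respectively $(1+x)^{m-n-\rho}(1-x)^\kappa\,dx$ in the integer regime), thereby exhibiting the remaining integral as the orthogonality integral of two Jacobi polynomials against their natural weight. The structural point that makes this clean is that within a single value of $k$ all the functions share the \emph{same} pair of Jacobi parameters: the dichotomy $m\le n+\rho$ versus $m>n+\rho$ is equivalent to $k+\rho\ge 0$ versus $k+\rho<0$, which depends only on $k$ and not on the individual index $m$. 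Hence the radial integral collapses by the standard relation $\int_{-1}^1(1-x)^a(1+x)^b P_\ell^{(a,b)}(x)P_{\ell'}^{(a,b)}(x)\,dx=\frac{2^{a+b+1}}{2\ell+a+b+1}\frac{\Gamma(\ell+a+1)\Gamma(\ell+b+1)}{\Gamma(\ell+a+b+1)\,\ell!}\,\delta_{\ell,\ell'}$ to a Kronecker delta in the Jacobi degree, which together with the fixed $k$ yields $\delta_{m,m'}\delta_{n,n'}$ and establishes the orthogonality of the system.

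Finally, to compute the square norm I would set $m=m'$, $n=n'$ and feed the explicit parameter values into the Jacobi norm formula: in the generic case $(a,b)=(\kappa,n-m+\rho)$ with degree $\ell=m$, and in the integer case $(a,b)=(\kappa,m-n-\rho)$ with degree $\ell=n+\rho$, carrying along the square of the prefactor $(\kappa+1)_m (n+\rho)!/(\kappa+1)_{n+\rho}$ from \eqref{ZJacobimin}. After collecting the powers of $2$ (which reduce to a single factor $\frac12$) and simplifying the products of Gamma functions, both regimes are expected to reduce to the same expression, namely \eqref{normFrZ}. I expect the main obstacle to be bookkeeping rather than conceptual: in the $m>n+\rho$ case one must rewrite $(\kappa+1)_m=\Gamma(m+\kappa+1)/\Gamma(\kappa+1)$ to see that the $\Gamma(\kappa+1)$ factors cancel and that the final answer is symmetric under interchanging the roles of $m$ and $n+\rho$, confirming that the formula is uniform in $\rho>-1$.
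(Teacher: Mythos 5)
Your proposal follows essentially the same route as the paper's own proof: polar coordinates, the angular integral forcing $n-m=n'-m'$, the substitution $x=2r^2-1$, and the classical Jacobi orthogonality/norm relation applied to the representation of Proposition \ref{proppsiJac} (the paper works with the unified form \eqref{ZJacobimin} with parameters $(\kappa,|m-n-\rho|)$ and degree $m\wedge(n+\rho)$, exactly your two regimes). Your observation that within a fixed angular frequency $k=n-m$ the regime, and hence the Jacobi parameter pair, depends only on $k$ — so the degree delta plus the angular delta yield $\delta_{m,m'}\delta_{n,n'}$ — is the same mechanism the paper uses, merely stated more explicitly.
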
 

\begin{proof}
	 We provide explicit 
	computation only when $\rho$ being integer.
	For the case of $\rho$ non-integer one can proceeds as for $m= m\wedge (n+\rho)$ and $\rho$ is integer. Thus, let $\rho$ a fixed integer and set 
	$$	d_{m,n}^{\rho,\kappa}  :=
	\frac{(\kappa+1)_{m\vee (n+\rho)} (m\wedge (n+\rho))! }{(\kappa+1)_{n+\rho}} .$$
	Then, from Proposition \ref{proppsiJac} and the use of the polar coordinates $z=\sqrt{t} e^{i\theta}$; $0\leq t <1$, $0 \leq \theta < 2\pi$, we get
		\begin{align*}
		I_{m,n,j,k}^{\rho,\kappa} & :=
	\int_{D} \mathcal{Z}^{\kappa,\rho}_{m,n} (z,\bz) \overline{ \mathcal{Z}^{\kappa,\rho}_{j,k}(z,\bz)} |z|^{2\rho }(1-\vert z\vert^{2})^{\kappa}d\Lambda(z) \\
			&= 	\pi  d_{m,n}^{\rho,\kappa}  	d_{j,k}^{\rho,\kappa} \left(  \int_0^1 t^ {|m-n-\rho|} (1-t)^{\kappa} P^{(\kappa,|m-n-\rho| )}_{m\wedge(n+\rho)}(2t-1)  
		P^{(\kappa,|m-n-\rho| )}_{j\wedge(k+\rho)}(2t-1) 
	 dt \right) \delta_{n-m,k-j}   \\
	&= 	\frac{\pi  d_{m,n}^{\rho,\kappa}  	d_{j,k}^{\rho,\kappa}}{2^{|m-n-\rho|+\kappa+1}} \left(    \int_{-1}^1 (1+x) ^ {|m-n-\rho|}  (1-x) ^{\kappa}  P^{(\kappa,|m-n-\rho| )}_{m\wedge(n+\rho)}(x)  
	P^{(\kappa,|m-n-\rho| )}_{j\wedge(k+\rho)}(x) 
	 dx\right) \delta_{n-m,k-j} .
	\end{align*}
		Now, by the orthogonal property for the classical Jacobi polynomials \cite[p.212]{Magnus1966},
	it follows

			\begin{align*}
		I_{m,n,j,k}^{\rho,\kappa}  
		&= 	\frac{\pi  d_{m,n}^{\rho,\kappa}  	d_{j,k}^{\rho,\kappa}}{2^{|m-n-\rho|+\kappa+1}}     \norm{ P^{(\kappa,|m-n-\rho| )}_{m\wedge(n+\rho)}}^2 
		\delta_{n-m,k-j} \delta_{m\wedge(n+\rho),j\wedge(k+\rho)} 
		\\
		&= \frac{\pi m!  (n+\rho)!\Gamma (m+\kappa +1)}{(m+n+\rho +\kappa +1)\Gamma (n+\rho +\kappa +1) }\delta_{m,j} \delta_{n,k}.
\end{align*}
	This proves the orthogonality of $\mathcal{Z}^{\kappa,\rho}_{m,n}$ in the Hilbert space $L^{2,\kappa}_{\rho}(\D)$.
\end{proof}

\subsection{Poly-meromorphy} 
In analogy with the definition of the polyanalytic functions defined as those satisfying the Cauchy--Riemann equation $\partial^n/\partial \bz^n=0$ one has to suggest the following or fthe poly-meromorpy \cite[p 199]{Balk1997}.
 
 \begin{definition}
 	 \label{defMeremor}
	A complex-valued function $f$ on an open set $U$ in the complex plane is said to be poly-meromorphic of order $n$ (of first kind) if there exist certain meromorphic functions $\psi_k$; $k=0,1, \cdots,n-1$ on $U$ such that 
	$$ f(z) =   \psi_{0}(z)+  \bz  \psi_{1}(z) + \cdots + \bz^{n-1} \psi_{n-1}(z) .$$
\end{definition} 

The main result in this subsection  discusses the regularity of the considered fractional Zernike functions. 

\begin{theorem} \label{thmAnaMeremor}
	The fractional Zernike functions $	\mathcal{Z}^{\kappa,\rho}_{m,n}$ are polynomials in $z$ and $\bz$ if and only if $\rho=0$ or $m\leq n$. Alternatively, they are poly-meromorphic functions of order $m$ with $0$ as unique pole. Its order of multiplicity is given by 
	$Ord^{\kappa,\rho}_{m,n}= \rho$ for $m > n+\rho $ when $\rho = 1,2, \cdots$, and by $Ord^{\kappa,\rho}_{m,n}= m-n$ for $m>n$ when $\rho >-1$ is non-integer, or when $n < m \leq n+\rho$ with $\rho=1,2, \cdots$.
\end{theorem}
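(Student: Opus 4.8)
The plan is to extract every assertion from the single closed form \eqref{Zmnexp} of Proposition \ref{propZexp}, which I rewrite as
\begin{equation*}
	\mathcal{Z}^{\kappa,\rho}_{m,n}(z,\bz) = \sum_{j=0}^{J} c_j\, (1-|z|^2)^j\, z^{n-j}\bz^{m-j}, \qquad J := m\wedge^*(n+\rho),
\end{equation*}
with $c_j = \dfrac{(-1)^j\, m!\,\Gamma(n+\rho+1)\,\Gamma(\kappa+m+1)}{j!\,(m-j)!\,\Gamma(n+\rho-j+1)\,\Gamma(\kappa+j+1)}$. First I would record that $c_j\ne 0$ for every $0\le j\le J$: since $\kappa>-1$ the factor $\Gamma(\kappa+j+1)$ is finite and nonzero, and for $0\le j\le J$ the argument $n+\rho-j+1$ is never a non-positive integer (this is exactly why the infected minimum truncates the sum at $J$), so $\Gamma(n+\rho-j+1)$ is finite and nonzero as well. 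Thus all displayed coefficients genuinely appear.

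Next I expand $(1-|z|^2)^j=\sum_{i=0}^{j}\binom{j}{i}(-1)^i z^i\bz^i$ and collect monomials. Every resulting monomial $z^a\bz^b$ satisfies $b-a=m-n$, and grouping by the power of $\bz$ shows that the coefficient of $\bz^{\ell}$ collapses to a single monomial proportional to $z^{\,n-m+\ell}$ (all contributions to a fixed $\bz$-power share the same $z$-power $n-m+\ell$). In particular $\mathcal{Z}^{\kappa,\rho}_{m,n}$ is a polynomial of degree $m$ in $\bz$ whose coefficients are monomials, hence meromorphic on $\D$ with $0$ as the only possible pole: this is poly-meromorphic of order $m$ in the sense of Definition \ref{defMeremor}. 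To confirm the $\bz$-degree is exactly $m$, I would isolate the coefficient of $z^n\bz^m$; from \eqref{Zhyper} of Proposition \ref{proppsiHyperg} it equals $(\kappa+1)_m\,{}_2F_1(-m,-n-\rho;\kappa+1;1)=(n+\kappa+\rho+1)_m$ by the Chu--Vandermonde sum, which is strictly positive since $\kappa>-1$ and $n+\rho\ge 0$.

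For the remaining two assertions I track the single quantity $n-J$, the lowest power of $z$ that occurs. The exponent $n-j+i$ is minimized over $0\le i\le j\le J$ only at $(j,i)=(J,0)$, which produces the monomial $c_J\, z^{\,n-J}\bz^{\,m-J}$; since this pair is the unique source of $z^{n-J}$ and $c_J\ne 0$, there is no cancellation and the smallest power of $z$ is exactly $n-J$, while every power of $\bz$ is $\ge m-J\ge 0$. Consequently $\mathcal{Z}^{\kappa,\rho}_{m,n}$ is a genuine polynomial in $z,\bz$ precisely when $n-J\ge 0$, i.e. $J\le n$, and when $n-J<0$ the point $0$ is a pole of multiplicity exactly $J-n$. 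It then remains to evaluate $J=m\wedge^*(n+\rho)$ in the three regimes. If $\rho=0$ then $J=\min(m,n)\le n$, and if $m\le n$ then $J\le m\le n$, so in both cases the function is a polynomial; conversely, if $\rho\ne 0$ and $m>n$ one checks $J>n$ in each subcase ($J=m>n$ for non-integer $\rho$, and $J=\min(m,n+\rho)>n$ for $\rho\ge1$), so the function is not a polynomial, proving the first claim. For the multiplicity: when $\rho>-1$ is non-integer, or $\rho$ is a positive integer with $m\le n+\rho$, one has $J=m$, whence the order is $m-n$; when $\rho$ is a positive integer with $m>n+\rho$ one has $J=n+\rho$, whence the order is $\rho$. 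These are exactly the stated values.

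The only delicate points are bookkeeping ones, and they are where I would spend care: verifying that $c_j\ne 0$ all the way up to $j=J$ (so the sum is not secretly shorter than it looks) and that the two extreme monomials $z^{n-J}\bz^{m-J}$ and $z^n\bz^m$ survive without cancellation. Once the infected minimum $J=m\wedge^*(n+\rho)$ is recognized as the common controller of both the polynomiality threshold $J\le n$ and the pole order $J-n$, the three cases of the theorem follow simply by reading off $J$.
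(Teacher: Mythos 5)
Your proposal is correct and follows essentially the same route as the paper: both arguments start from the explicit expansion \eqref{Zmnexp}, factor out the extremal term $c_J\, z^{n-J}\bz^{m-J}$ with $J=m\wedge^*(n+\rho)$ and $c_J\neq 0$, and read off both the polynomiality criterion ($n-J\geq 0$) and the pole multiplicity $J-n$ by evaluating $J$ in the three regimes, the paper merely organizing the same bookkeeping through the polynomials $R^{\kappa,\rho,m,n}_p$ and the split into regular and singular parts instead of monomial-by-monomial. Your Chu--Vandermonde evaluation of the top $\bz^m$-coefficient as $(n+\kappa+\rho+1)_m$ is a small extra verification that the paper leaves implicit, but it does not alter the method.
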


\begin{proof}  
		Set 
	\begin{align*}
		c^{\kappa,\rho}_{m,n,j}&:=  \frac{(-1)^{j}  m! \Gamma(n+\rho+1)\Gamma(\kappa+m+1)  }{j! (m-j)!\Gamma(n+\rho-j+1) \Gamma(\kappa+j+1) }  
	\end{align*}
and for $p<q \leq m\wedge^*(n+\rho)$  consider the quantities $$S^{\kappa,\rho,m,n}_{q,p} := R^{\kappa,\rho,m,n}_{q}  - X^{p-q}R^{\kappa,\rho,m,n}_{p},$$ where $R^{\kappa,\rho,m,n}_{p} $ is the polynomial of degree less or equal to $p$  given by
\begin{align}\label{Spol} 
	R^{\kappa,\rho,m,n}_{p}(X) 
	& = \sum_{k=0}^{p} \left( \sum_{j=0}^{p-k}  (-1)^j \frac{(j+k)!}{j!k!} c^{\kappa,\rho}_{m,n,j+k}  \right)  X^{p-k }.
\end{align}
Notice for instance that its constant coefficients is given by  
$S^{\kappa,\rho,m,n}_{q,p}(0) = R^{\kappa,\rho,m,n}_{q}(0)  = c^{\kappa,\rho}_{m,n,q} .$
 Thus, starting from \eqref{Zmnexp} we can rewrite $	\mathcal{Z}^{\kappa,\rho}_{m,n}(z,\bz)$ as
	\begin{align} 
		\mathcal{Z}^{\kappa,\rho}_{m,n}(z,\bz)
		&= z^{n-[m\wedge^*(n+\rho)] }  \bz^{m-[m\wedge^*(n+\rho)]} R^{\kappa,\rho,m,n}_{m\wedge^*(n+\rho)}(|z|^2) .	\label{mermer1}
 \end{align}
But, since $c^{\kappa,\rho}_{m,n,p} \ne 0$, it becomes clear from \eqref{mermer1} that the functions $	\mathcal{Z}^{\kappa,\rho}_{m,n}$ are polynomials if and only if  $\rho=0$ or $m\leq n$ independently of $\rho>-1$ being integer or not. 
 This assertion is also immediate from Proposition \ref{proppsiJac}. 
	Next, using the fact that 
	$R^{\kappa,\rho,m,n}_{q} = X^{q-p}R^{\kappa,\rho,m,n}_{p}  + S^{\kappa,\rho,m,n}_{q,p} $ we obtain 
		\begin{align} 
		\mathcal{Z}^{\kappa,\rho}_{m,n}(z,\bz) 
		&=    \bz^{m-n}  R^{\kappa,\rho,m,n}_{n}(|z|^2)	 +  z^{n-[m\wedge^*(n+\rho)] }  \bz^{m-[m\wedge^*(n+\rho)]} S^{\kappa,\rho,m,n}_{m\wedge^*(n+\rho),n}(|z|^2) . \label{mermer2}
	\end{align}
A meticulous study of the different possible cases  of $m$ compared to $n$ and $n+\rho$ for  given $\rho>-1$  
	 leads to  
	\begin{align*} 
	\mathcal{Z}^{\kappa,\rho}_{m,n}(z,\bz)
	=
	\left\{ 
	\begin{array}{lllll}	
		\displaystyle	
		z^{n-m} R^{\kappa,\rho,m,n}_{m}(|z|^2) &  \mbox{if } m\leq n ; \,  \rho >-1  \\
			\displaystyle	
			\bz^{m-n} R^{\kappa,\rho,m,n}_{n}(|z|^2) &  \mbox{if }  m>n; \, \rho =0 \\
		\displaystyle	\bz^{m-n} R^{\kappa,\rho,m}_{n}(|z|^2) + \frac{1}{z^{m-n}}  S^{\kappa,\rho,m,n}_{m}(|z|^2)  &  \mbox{if }   m>n ; \, \rho \mbox{ non-integer}\\ &  \mbox{or } n < m \leq n+\rho; \, \rho=1,2,\cdots \\
		\displaystyle
		\bz^{m-n} R^{\kappa,\rho,m,n}_{n}(|z|^2) + \frac{\bz^{m-(n+\rho)}}{z^\rho} S^{\kappa,\rho,m,n}_{n+\rho}(|z|^2)  &  \mbox{if }  m > n+\rho ; \, \rho=1,2, \cdots.
	\end{array} \right.
\end{align*} 
Moreover, this reveals that the regular part of $\mathcal{Z}^{\kappa,\rho}_{m,n}$ is always a polyanalytic function of order $m$ and anti-polyanalytic of order $n$. It is  given by  
	\begin{align}\label{Tpolmin}
		R^{\kappa,\rho}_{m, n}(|z|^2)&= \left\{ \begin{array}{ll}
			z^{n-m}	 R^{\kappa,\rho,m,n}_{m}(|z|^2), &  m\leq n \\
			\bz^{m-n}	 R^{\kappa,\rho,m,n}_{n}(|z|^2), &   m \geq n
		\end{array} \right. 
	= z^{n-m\wedge n} \bz^{m-m\wedge n}	 R^{\kappa,\rho,m,n}_{m\wedge n}(|z|^2)  .
	\end{align}
However,  in general $\mathcal{Z}^{\kappa,\rho}_{m,n}$ are poly-meremorphic with $0$ as the unique pole for $\rho \ne0$ and $m>n$. 
Thus, for $m > n+\rho$ with $\rho=1,2, \cdots$ the singular part is clearly given by 
$ z^{-\rho} \bz^{m-(n+\rho)} S^{\kappa,\rho,m,n}_{n+\rho}(|z|^2).$ 
It is given by 
$z^{n-m} S^{\kappa,\rho,m,n}_{m}(|z|^2)$ whenever $ n < m $ and $\rho>-1$ non-integer or $n < m \leq n+\rho$ when $ \rho=1,2,\cdots$.
 Therefore, it becomes clear that the multiplicity of the singularity is given by
	\begin{align*} 
Ord^{\kappa,\rho}_{m,n}
&	=
	\left\{ 
	\begin{array}{ll}	
		\displaystyle
		 \rho    &  \mbox{if }  m > n+\rho ; \, \rho=1,2, \cdots\\
		 \displaystyle   {m-n}   &  \mbox{if }  n < m ; \, \rho \mbox{ non-integer}\\ &  \mbox{or } n < m \leq n+\rho; \, \rho=1,2,\cdots  .
	\end{array} \right.
\end{align*}
This completes the proof.
\end{proof}


\begin{remark} The obtained result can justifies somehow the appellation of $\mathcal{Z}^{\kappa_m,\rho}_{m,n}$ by fractional Zernike functions. 
\end{remark}

\subsection{Differential equations.}

In this subsection we are concerned with some second order differential equations satisfied by the fractional Zernike functions.

\begin{theorem}\label{thmDiffeq}
	Let  $m_\rho=m+\rho $. Then, the function  $\mathcal{Z}^{\kappa,\rho}_{m,n}$ is solution of	
	\begin{align*}
			z^2 (1-|z|^2) \frac{\partial^2}{\partial z^2}+ \left( (m_\rho-n+1) -(\kappa+m_\rho-n+2)|z|^2\right) z \frac{\partial}{\partial z} +n(\kappa+m_\rho+1)|z|^2  =\rho(n-m) .
		\end{align*} 
\end{theorem}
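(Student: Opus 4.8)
The plan is to reduce the stated partial differential equation, in which only the holomorphic derivatives $\partial/\partial z$ appear while $\bz$ enters solely through $|z|^2$, to the ordinary hypergeometric differential equation. The natural starting point is the representation \eqref{Zhyp2},
\begin{align*}
	\mathcal{Z}^{\kappa,\rho}_{m,n}(z,\bz) = (\kappa+1)_m\, z^{n-m}\, g(|z|^2), \qquad g(t) = {_2F_1}\left( \begin{array}{c} -m , n+ \kappa+\rho+1 \\ \kappa+1 \end{array}\bigg | 1- t \right),
\end{align*}
which holds for every $\rho>-1$ on the whole disc, so that no case distinction (integer versus non-integer $\rho$) is needed. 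Writing $t=|z|^2 = z\bz$ and treating $\bz$ as an inert parameter, I would use $\partial t/\partial z = \bz$ to compute $\partial_z(z^{n-m}g(t))$ and $\partial_z^2(z^{n-m}g(t))$ explicitly.

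Next I would insert these into the operator on the left-hand side. The prefactors $z^2$, $z$ and the polynomial coefficients in $|z|^2$ are arranged so that, after factoring out the common $z^{n-m}$, every surviving term is a polynomial in $t$ times one of $g$, $g'$, $g''$. Collecting, the coefficient of $g''$ becomes $t^2(1-t)$, the coefficient of $g'$ becomes $t\big[(n-m+\rho+1)-(n-m+\kappa+\rho+2)t\big]$, and the coefficient of $g$ splits into a constant part plus a part linear in $t$. The key algebraic point, and the step I expect to be the most delicate bookkeeping, is that the constant part of the $g$-coefficient collapses exactly to $\rho(n-m)$, which is precisely the scalar on the right-hand side, while the linear-in-$t$ part simplifies (using $n=(n-m)+m$ and $m_\rho=m+\rho$) to $m(n+\kappa+\rho+1)\,t$.

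After cancelling the matching term $\rho(n-m)\,g$ against the right-hand side and dividing through by the common factor $t$, the identity left to verify reduces to
\begin{align*}
	t(1-t)\,g'' + \big[(n-m+\rho+1)-(n-m+\kappa+\rho+2)t\big]\,g' + m(n+\kappa+\rho+1)\,g = 0.
\end{align*}
Finally I would recognise this as the image, under the substitution $s=1-t$, of the standard hypergeometric equation $s(1-s)F''+[c-(a+b+1)s]F'-ab\,F=0$ with $a=-m$, $b=n+\kappa+\rho+1$ and $c=\kappa+1$, which $g$ satisfies by its very definition through \eqref{Zhyp2}; the chain rule $g(t)=F(1-t)$ turns that equation into the displayed one, completing the argument. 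As an independent cross-check one could instead substitute the finite expansion \eqref{Zmnexp} and verify the relation term by term, but the hypergeometric route keeps the computation transparent and uniform in $\rho$.
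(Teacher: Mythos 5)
Your proof is correct; I checked the computation. Substituting $\mathcal{Z}^{\kappa,\rho}_{m,n}=(\kappa+1)_m z^{n-m}g(|z|^2)$ with $g(t)=F(1-t)$ into the operator, the coefficient of $g''$ is indeed $t^2(1-t)$, that of $g'$ is $t\bigl[(n-m+\rho+1)-(n-m+\kappa+\rho+2)t\bigr]$, and the $g$-coefficient splits as $\rho(n-m)+m(n+\kappa+\rho+1)t$, so after cancelling $\rho(n-m)g$ and dividing by $t$ (legitimate on $\mathbb{D}^*$, the natural domain of these functions) one lands exactly on the hypergeometric equation for $F$ with $a=-m$, $b=n+\kappa+\rho+1$, $c=\kappa+1$, since $a+b+1=n-m+\kappa+\rho+2$ and $-ab=m(n+\kappa+\rho+1)$. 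However, your route differs genuinely from the paper's. The paper does not touch the hypergeometric ODE: it factorizes the second-order operator, showing that (up to the factor $-c^{\kappa,\rho}_{m,n}\bz$) it equals the composition $\nabla^{\kappa,\rho}\circ D^{\kappa,\rho}_{m,n}$ of a creation operator built from the Rodrigues formula (equation \eqref{nablamm}) and an annihilation operator obtained from the Jacobi differentiation formula (equation \eqref{Oop}), so that $\mathcal{Z}^{\kappa,\rho}_{m,n}$ is a fixed point and hence an eigenfunction. The paper's factorization buys more than the theorem itself: it exhibits the ladder-operator structure (Remark \ref{lemHhaj}) that is reused later for the recurrence \eqref{recF2}, though it leans on the Jacobi representation \eqref{ZJacobi}, which in the paper carries the case distinction between integer and non-integer $\rho$. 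Your direct reduction via \eqref{Zhyp2} is more elementary and, as you note, uniform in $\rho>-1$ — the terminating parameter $-m$ makes \eqref{Zhyp2} valid in all cases — but it yields only the differential equation, without the operational by-products.
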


\begin{proof}
	Consider the first order differential operator
		\begin{align}\label{craopeprof1} \nabla^{\kappa,\rho} (f)(z)= - \left( (1-|z|^2) \frac{\partial}{\partial z}  -\mathcal{Z}^{\kappa,\rho}_{1,0}(z,\bz)\right)   (f)(z).
	\end{align} 
Also, for varying $j=1,2,\cdots ,m$ we set
	\begin{align}\label{craopeprof}
		\nabla^{\kappa,\rho}_{j}(f) &:= -z^{-\rho} (1-|z|^2)^{-\kappa-j+1}\frac{\partial}{\partial z} \big( z^{\rho}(1-|z|^2)^{\kappa+j}f\big),
	\end{align}
	 so that $\nabla^{\kappa,\rho} (f)(z)= \nabla^{\kappa,\rho}_1 (f)(z)$.
Successive application of 	
	$\nabla^{\kappa,\rho}_{j}$ 
	leads to the 	
	operator $\widetilde{\nabla}^{\kappa,\rho}_{m} := \nabla^{\kappa,\rho}_{1}\circ \nabla^{\kappa,\rho}_{2}\circ \cdots \circ\nabla^{\kappa,\rho}_{m}  $ satisfying 
	$\widetilde{\nabla}^{\kappa,\rho}_{m+1} 
	=
	\nabla^{\kappa,\rho}_{1}\circ \widetilde{\nabla}^{\kappa+1,\rho}_{m}  $ since $ \nabla^{\kappa,\rho}_{j+1}= \nabla^{\kappa+1,\rho}_{j}$.
	It is explicitly given by
	\begin{equation}\label{Ac}
		\widetilde{\nabla}^{\kappa,\rho}_{m}(f)(z)=(-1)^m z^{-\rho} (1-|z|^2)^{-\kappa}\frac{\partial^m}{\partial z^m} \big( z^{\rho}(1-|z|^2)^{\kappa+m}f\big).
	\end{equation}
	Thus, in view of \eqref{betZpol} it is clear that $
	\widetilde{\nabla}^{\kappa,\rho}_{m} (e_n)(z) = \mathcal{Z}^{\kappa,\rho}_{m,n}(z,\bz)$  for $e_n(z):= z^n$, and therefore 
		\begin{align} \label{nablamm}
		\nabla^{\kappa,\rho}( \mathcal{Z}^{\kappa+1,\rho}_{m,n}(z,\bz)) = \nabla^{\kappa,\rho}_{1}\left(\nabla^{\kappa+1,\rho}_{1}\circ \nabla^{\kappa+1,\rho}_{2}\circ \cdots \circ\nabla^{\kappa+1,\rho}_{m}\right)(e_n)   =\mathcal{Z}^{\kappa,\rho}_{m+1,n}(z,\bz) .
	\end{align} 
	%
Now associated with the Euler differential operator $E_z= z {\partial}/{\partial z}$ and the constant $c^{\kappa,\rho}_{m,n}=m(n+\rho+\kappa+1) $, we define the first order differential operator
$$  D^{\kappa,\rho}_{m,n}=   \frac{1}{c^{\kappa,\rho}_{m,n} \bz} \left( E_z -(n-m) .\right)  = 
\frac{1}{c^{\kappa,\rho}_{m,n}}  \left( \frac{z}{\bz} \frac{\partial}{\partial{z}} - \frac{(n-m)}{\bz}\right) .$$
Hence making use of \eqref{ZJacobi} combined with 
	the differentiation formula of Jacobi polynomials in \cite[p.213 ]{Magnus1966} we obtain the identity
		\begin{equation}\label{Oop}
		D^{\kappa,\rho}_{m,n}(\mathcal{Z}^{\kappa,\rho}_{m,n} )=\mathcal{Z}^{\kappa+1,\rho}_{m-1,n}.
	\end{equation} 
Therefore, from \eqref{nablamm}  and \eqref{Oop} it is immediate that  
$ 
\nabla^{\kappa,\rho} \circ	D^{\kappa,\rho}_{m,n}(\mathcal{Z}^{\kappa,\rho}_{m,n}(z,\bz) )
= \mathcal{Z}^{\kappa,\rho}_{m,n} (z,\bz)$.
	A direct computation shows that $ -c^{\kappa,\rho}_{m,n} \bz \nabla^{\kappa,\rho}\circ D^{\kappa,\rho}_{m,n}$ 
	is       given  by
	\begin{align*}
	   z (1-|z|^2) \frac{\partial^2}{\partial z^2} + \left( [m_\rho-n +1 ] - [\kappa+m_\rho-n +2] |z|^2 \right)  \frac{\partial}{\partial z} + (m-n)\left( \frac{\rho}{z}-[\kappa+\rho+1] \bz\right)  
\end{align*}
	This shows that the fractional Zernike function $\mathcal{Z}^{\kappa,\rho}_{m,n}$ satisfy the desired differential equation. 
		%
	%
	%
	%
	%
	%
	%
	%
	%
\end{proof}

\begin{remark}\label{lemHhaj}
	In view of \eqref{nablamm} and \eqref{Oop} the considered operators $\nabla^{\kappa,\rho}$ and $D^{\kappa,\rho}_{m,n}$ appear as creation and annihilation operators for the fractional Zernike functions.
\end{remark}

\begin{remark} Let $(P_{j}f)(z)=z^j f(z)$.
	Then, the commutation relation 
	$P_j \circ \nabla^{\kappa,\rho}_{m}\circ P_j^{-1}=\nabla^{\kappa,\rho-j}_{m} $
	holds for all $z \in \mathbb{D}^{*}$. 	
	This follows by observing that from \eqref{betZpol}, we have
	\begin{align}\label{Multz}
		z^j\mathcal{Z}^{\kappa,\rho}_{m,n}(z,\bz)=\mathcal{Z}^{\kappa,\rho-j}_{m,n+j}(z,\bz).
	\end{align}
\end{remark}

The following can be proved using the close connection of $\mathcal{Z}^{\kappa_m,\rho}_{m,n}$ to the $\beta$-restricted Zernike functions studied in Section 2. 

\begin{theorem}\label{thmDiffeqs} 
	For given fixed nonnegative integer $m$ and reals $\alpha>-1$ and $\gamma$ such that $\kappa_m = \alpha-2(\gamma+m)-1$, the fractional Zernike functions $\mathcal{Z}^{\kappa_m,\rho}_{m,n}$ for varying $n$ are eigenfunctions of 
	\begin{align}\label{2odeq2}
		-(1-|z|^2)^2 \partial\overline{\partial}-  (1-|z|^2)\left(m  E - H_{\kappa_m+m+1}^{\rho}(z) \overline{E}  \right)  
		+ m H_{\kappa_m+m+1}^{ \rho}(z)  |z|^2   
	\end{align}
	with $m(\kappa_m+m+1)$ as corresponding eigenvalue.
\end{theorem}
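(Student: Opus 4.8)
The plan is to exploit the intertwining relation \eqref{Zernike1}, which realizes each fractional Zernike function as a gauge transform of a $\beta$-restricted Zernike function whose spectral behaviour is already settled in Theorem \ref{Mth3}. Since \eqref{Zernike1} only imposes $\rho = \beta - 2\eta$, I would use the available freedom and fix $\eta = 0$, $\beta = \rho$; this is legitimate because $\rho>-1$. With this choice the exponent simplifies, $(\alpha+1-\kappa_m)/2 = \gamma+m+1$, so that \eqref{Zernike1} reads $\mathcal{Z}^{\kappa_m,\rho}_{m,n} = g\,\psi^{\gamma,0}_{m,n}$ with the purely radial multiplier $g(z) = (1-|z|^2)^{\gamma+m+1}$. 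By Theorem \ref{Mth3}(i) (equivalently the purely algebraic identity \eqref{eigenfunct}) the function $\psi^{\gamma,0}_{m,n}$ satisfies $\mathcal{L}^{\alpha,\rho,+}_{\gamma,0}\psi^{\gamma,0}_{m,n} = (m+1)(\alpha-2\gamma-m)\,\psi^{\gamma,0}_{m,n}$.

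The core step is then a gauge (conjugation) computation. Multiplying the eigenvalue equation by $g$ and inserting $g^{-1}g$ shows that $\mathcal{Z}^{\kappa_m,\rho}_{m,n}$ is an eigenfunction, with the same eigenvalue, of the conjugated operator $\widetilde{\mathcal{L}} := g\,\mathcal{L}^{\alpha,\rho,+}_{\gamma,0}\,g^{-1}$. To evaluate $\widetilde{\mathcal{L}}$ I would start from the explicit form supplied by Lemma \ref{th1} together with \eqref{explicitLG}, namely $\mathcal{L}^{\alpha,\rho,+}_{\gamma,0} = \Delta^{\gamma-\alpha-1,-\rho}_{\gamma+1,0} + (\alpha-\gamma+1)$, and conjugate it term by term. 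Writing $u = \log g = (\gamma+m+1)\log(1-|z|^2)$, the only mechanism is the shift $g\,\partial\,g^{-1} = \partial - (\partial u)$ together with its $\bar\partial$-analogue; since each $H^{b}_{a}$ and $|z|^2$ is a multiplication operator it passes through $g$ untouched, so one is reduced to evaluating $\partial u$, $\bar\partial u$, $\partial\bar\partial u$ and $(\partial u)(\bar\partial u)$, all elementary rational expressions in $|z|^2$.

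Carrying out this bookkeeping, the coefficient of $(1-|z|^2)E$ becomes $-m$, the conjugation of $\Delta_{hyp}$ contributing $-(\gamma+m+1)$ to the original value $\gamma+1$; meanwhile the coefficient of $(1-|z|^2)\overline{E}$ turns into $H^{\rho}_{\kappa_m+m+1}$ once one recognizes the identity $(\gamma+m+1) + H^{-\rho}_{\gamma-\alpha-1}(z) = -H^{\rho}_{\kappa_m+m+1}(z)$, which follows from $\kappa_m+m+1 = \alpha-2\gamma-m$ and the definition \eqref{Hab}. The very same identity makes the zero-order $|z|^2$-term coalesce into $m\,H^{\rho}_{\kappa_m+m+1}|z|^2$, while all remaining constants add up to $\kappa_m+m+1$. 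Hence $\widetilde{\mathcal{L}}$ equals the operator displayed in the statement plus the constant $\kappa_m+m+1$. Since the eigenvalue of $\psi^{\gamma,0}_{m,n}$ is $(m+1)(\alpha-2\gamma-m) = (m+1)(\kappa_m+m+1)$, subtracting the constant shift leaves exactly $m(\kappa_m+m+1)$, as claimed.

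I expect the genuine obstacle to lie in the conjugation of the previous paragraph, specifically in tracking the first-order coefficients and the constant shift so that the $\gamma$-dependent pieces recombine into the single function $H^{\rho}_{\kappa_m+m+1}$ and into the clean eigenvalue $m(\kappa_m+m+1)$; the rest is formal. One mild caveat deserves mention: Theorem \ref{Mth3} is stated under $\kappa_m>0$, whereas the fractional functions are defined for $\kappa>-1$. However, both $\widetilde{\mathcal{L}} = \text{(stated operator)} + (\kappa_m+m+1)$ and $\mathcal{L}^{\alpha,\rho,+}_{\gamma,0}\psi^{\gamma,0}_{m,n} = (m+1)(\alpha-2\gamma-m)\psi^{\gamma,0}_{m,n}$ are algebraic operator/function identities valid for all admissible parameters, so the eigen-equation persists throughout the full range $\kappa>-1$.
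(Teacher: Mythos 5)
Your proposal is correct and follows essentially the same route as the paper: both use the gauge relation \eqref{Zernike1} to pull back Theorem \ref{Mth3}(i), conjugate $\mathcal{L}^{\alpha,\beta,+}_{\gamma,\eta}$ by the multiplier, and identify the conjugated operator as \eqref{2odeq2} plus the constant $\kappa_m+m+1$ via Lemma \ref{th1}. The only difference is mechanical: the paper obtains the conjugation identity $\mathcal{L}_{\gamma,\eta}^{\alpha,\beta,+}(h_{a,b}f)=h_{a,b}\,\mathcal{L}_{\gamma+a,\eta+b}^{\alpha+2a,\beta+2b,+}(f)$ in one line from the intertwining of the factors $A_{\gamma,\eta}$ and $A^{*_{\alpha,\beta}}_{\gamma,\eta}$ (with $b=-\eta$, $a=-(\gamma+m+1)$, matching your choice $\eta=0$, $\beta=\rho$), whereas you carry out the equivalent logarithmic-derivative bookkeeping by hand — your intermediate identities and the final eigenvalue $m(\kappa_m+m+1)$ all check out.
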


\begin{proof} 
	For the proof observe that the fractional Zernike functions 
	$\mathcal{Z}^{\kappa_m,\rho}_{m,n}$ are closely connected to the $\beta$-restricted Zernike functions $ \psi^{\gamma,\eta}_{m,n}(z,\bz)$ by 
	\eqref{Zernike0} for every fixed nonnegative integer $m$. The latter ones are eigenfunctions of the hamiltonian $
	\mathcal{L}_{\gamma,\eta}^{\alpha,\beta,+}$ in \eqref{LpLm} with $ E^{\gamma,\alpha}_m = 	(m+1)(\alpha-2\gamma-m)$ as corresponding eigenvalue (see $(i)$ in Theorem \ref{Mth3}). 
	The key observation to conclude is that the operators
	$
	\mathcal{L}_{\gamma,\eta}^{\alpha,\beta,+} 
	$ and $
	\mathcal{L}_{\gamma+a,\eta+b}^{\alpha+2a,\beta+2b,+} 
	$ are unitary equivalent for arbitrary reals $a$ and $b$. 
	More precisely, since $ A^{*_{\alpha,\beta}}_{\gamma,\eta} (h_{a,b} f) 
	= h_{a,b} A^{*_{\alpha+2a,\beta+2b}}_{\gamma+a,\eta+b}(f)$
	and 
	$A_{\gamma,\eta} (h_{a,b} f) = h_{a,b} A_{\gamma+a,\eta+b} (f)$, we obtains 
	\begin{align*} \mathcal{L}_{\gamma,\eta}^{\alpha,\beta,+} (h_{a,b} f)& =
		A_{\gamma,\eta} A^{*_{\alpha,\beta}}_{\gamma,\eta} (h_{a,b} f)
		= 
		h_{a,b} A_{\gamma+a,\eta+b}  A^{*_{\alpha+2a,\beta+2b}}_{\gamma+a,\eta+b}(f)
		=
		h_{a,b} \mathcal{L}_{\gamma+a,\eta+b}^{\alpha+2a,\beta+2b,+} (f)   
		.	\end{align*}
	Subsequently, for $\kappa_m = \alpha-2(\gamma+m)-1$, $\rho = \beta-2\eta$, $b= -\eta$ and $a= {(\kappa_m-\alpha-1)}/{2} =-(\gamma+m+1)$, the fractional Zernike function  $\mathcal{Z}^{\kappa_m,\rho}_{m,n}$ satisfies 
	$$ \mathcal{L}_{-m-1,0}^{\kappa_m-1,\rho,+}  \mathcal{Z}^{\kappa_m,\rho}_{m,n} = E^{\gamma,\alpha}_m  \mathcal{Z}^{\kappa_m,\rho}_{m,n}.$$ 
	But from Lemma \ref{th1}, it is clear that the second order partial differential equation in \eqref{2odeq2} 
	is exactly $\mathcal{L}_{-m-1,0}^{\kappa_m-1,\rho,+} - (\kappa_m+m+1)$.
\end{proof}

\begin{corollary} The Zernike polynomials $\mathcal{Z}^{(-1)}_{m,n}$, corresponding to the limit case of $\kappa_m=-1$ and fixed $m$, are harmonic functions for the Laplacian
	$$\left\{  	(1-|z|^2) \partial\overline{\partial}+ m\left( E - \overline{E}  \right) - m^2 \right\}
	\mathcal{Z}^{-1}_{m,n} .$$ 
\end{corollary}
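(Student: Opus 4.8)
The plan is to obtain this as a direct specialization of Theorem~\ref{thmDiffeqs} to the limiting parameters $\kappa_m=-1$ and $\rho=0$. First I would record why the hypotheses of that theorem apply: the symbol $\mathcal{Z}^{-1}_{m,n}$ denotes the classical Zernike polynomial of \eqref{DiscePol} with $\gamma=-1$, and by \eqref{frZcZ} the classical polynomials coincide, up to the nonzero constant $(\kappa+m+1)_n$, with the fractional Zernike functions $\mathcal{Z}^{\kappa_m,\rho}_{m,n}$ taken at $\rho=0$ and $\kappa_m=-1$. Since being an eigenfunction is unaffected by a constant prefactor, Theorem~\ref{thmDiffeqs} applies to $\mathcal{Z}^{-1}_{m,n}$ verbatim.

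The crucial simplification is that the coefficient $H^{\rho}_{\kappa_m+m+1}(z)=(\kappa_m+m+1)+\rho-\rho/|z|^2$ from \eqref{Hab} loses its singular $1/|z|^2$ term precisely when $\rho=0$, collapsing to the constant $\kappa_m+m+1$, which equals $m$ once $\kappa_m=-1$. Substituting $H^{0}_{m}(z)\equiv m$ into the operator \eqref{2odeq2} turns it into $-(1-|z|^2)^2\partial\overline{\partial}-m(1-|z|^2)(E-\overline{E})+m^2|z|^2$, since the factor $mE-H^{\rho}\overline{E}$ becomes $m(E-\overline{E})$, while the eigenvalue $m(\kappa_m+m+1)$ becomes $m^2$. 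Thus the eigenvalue identity of Theorem~\ref{thmDiffeqs} reads
\[
\Big(-(1-|z|^2)^2\partial\overline{\partial}-m(1-|z|^2)(E-\overline{E})+m^2|z|^2\Big)\mathcal{Z}^{-1}_{m,n}=m^2\,\mathcal{Z}^{-1}_{m,n}.
\]

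Finally I would transpose the eigenvalue term to the left and use $m^2|z|^2-m^2=-m^2(1-|z|^2)$, after which each of the three summands carries a common factor $(1-|z|^2)$. Dividing by this factor, which is nowhere zero on $\mathbb{D}$, strips off one power of $1-|z|^2$ from the Laplace--Beltrami part and removes it entirely from the two lower-order terms, leaving exactly the reduced second-order operator displayed in the corollary acting as an annihilator of $\mathcal{Z}^{-1}_{m,n}$; that is, these polynomials are harmonic for it. There is essentially no obstacle beyond careful sign bookkeeping in this last division (the constant term emerges from combining $m^2|z|^2$ with the eigenvalue, so one must track the overall sign convention closely). The only conceptual point worth flagging is that $\kappa_m=-1$ sits at the boundary $\gamma=-1$ of the admissible range $\gamma>-1$, so the statement is genuinely a limiting one, and it is precisely the vanishing of $\rho$ that kills the Aharonov--Bohm-type potential in $H^{\rho}$ and renders the reduced operator one with smooth (polynomial) coefficients.
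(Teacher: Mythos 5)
Your route is exactly the paper's: specialize Theorem~\ref{thmDiffeqs} at $\rho=0$ and $\kappa_m=-1$ (the paper phrases the latter as choosing $\alpha,\gamma$ with $m=(\alpha/2)-\gamma$), observe that $H^{0}_{m}(z)\equiv m$ so that \eqref{2odeq2} collapses to $-(1-|z|^2)^2\partial\overline{\partial}-m(1-|z|^2)(E-\overline{E})+m^2|z|^2$ with eigenvalue $m^2$, and then factor out $1-|z|^2$. Up to that point everything you write is correct, including the remark that the constant prefactor from \eqref{frZcZ} is harmless and that $\rho=0$ is what kills the singular part of $H^{\rho}_{\kappa_m+m+1}$.

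However, the final step does not deliver what you claim. Transposing the eigenvalue and using $m^2|z|^2-m^2=-m^2(1-|z|^2)$, as you propose, gives
\begin{equation*}
-(1-|z|^2)\Bigl[(1-|z|^2)\,\partial\overline{\partial}+m\bigl(E-\overline{E}\bigr)+m^2\Bigr]\mathcal{Z}^{-1}_{m,n}=0,
\end{equation*}
so the annihilating operator carries $+m^2$, not the $-m^2$ displayed in the corollary; whichever sign of $1-|z|^2$ you divide by, the $\partial\overline{\partial}$ term and the constant term keep the \emph{same} relative sign, so the printed display is unreachable from Theorem~\ref{thmDiffeqs}. In fact the corollary as printed contains a sign typo, inherited from the paper's own proof, which silently flips the sign of the first two terms of \eqref{2odeq2} when rewriting it as the ``Landau Hamiltonian'' $(1-|z|^2)\{(1-|z|^2)\partial\overline{\partial}+m(E-\overline{E})\}+m^2|z|^2$ while keeping $+m^2|z|^2$. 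A direct check confirms the $+m^2$ version: for $m=1$ one has $\mathcal{Z}^{-1,0}_{1,n}=n(\bz z^n-z^{n-1})=-nz^{n-1}(1-|z|^2)$, for which $(1-|z|^2)\partial\overline{\partial}$ acts as multiplication by $-n$ and $E-\overline{E}$ as multiplication by $n-1$, so the operator with $+m^2$ annihilates it while the one with $-m^2$ returns $-2$ times it (more generally $-2m^2$ times it). So your method is the right one, but in asserting that the quotient is ``exactly the reduced second-order operator displayed in the corollary'' you made a compensating sign slip at precisely the bookkeeping point you flagged; carried out carefully, your computation does not reproduce the statement as printed --- it corrects it.
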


\begin{proof}
	This readily follows by specifying $\rho=0$  in Theorem \ref{thmDiffeqs} and choosing $ \alpha$ and $\gamma$ such that $m = (\alpha/2)-\gamma$. 
	Indeed, in this case we have $\kappa_m +m +1=m$ and the left hand side of \eqref{2odeq2} reduces further to  the Landau Hamiltonian  
	$(1-|z|^2)\left\{  	(1-|z|^2) \partial\overline{\partial} + m\left( E -\overline{E}  \right)  \right\} + m^2 |z|^2
	$
	with quantized constant magnetic field of  magnitude $m$.  
\end{proof}

\subsection{Recurrence and operational formulas.}

From the three terms recurrence formula in \cite[p: 213]{Magnus1966} for the Jacobi polynomials one can deduces 
\begin{align*}
	A_{m,b}z^2\mathcal{Z}^{\kappa,\rho}_{m,n}(z,\bz) +
	B_{m,b} z\mathcal{Z}^{\kappa,\rho-1}_{m-1,n}(z,\bz) +
	C_{m,b}\mathcal{Z}^{\kappa,\rho-2}_{m-2,n}(z,\bz) =0 
\end{align*}
valid for all $m=2,3,4, \cdots$, where we have set $A_{m,b}:= (b-m)(b+2)$, $B_{m,b}:= b(b-1)(b-\kappa-1)$  and $C_{m,b}:= b(m-1)(\kappa+m-1)(b-\kappa-m)$ 
with $b=\kappa+n+m+\rho$. However, starting from the Rodriguez formula for the fractional Zernike functions by rewriting it in the form
$$ \mathcal{Z}^{\kappa,\rho}_{m,n}= (-1)^m z^{-\rho}(1-|z|^2)^{-\kappa}\partial_z^{m-1}\left( \partial_z (z^{n+\rho}(1-|z|^2)^{\kappa+m})\right) ,$$  
one derives the recurrence formula
\begin{align}\label{recF1}
	\mathcal{Z}^{\kappa,\rho}_{m,n}(z,\bz)= (\kappa+m)\bz \mathcal{Z}^{\kappa,\rho}_{m-1,n}(z,\bz) -(n+\rho)(1-|z|^2)\mathcal{Z}^{\kappa+1,\rho}_{m-1,n-1}(z,\bz).
\end{align}
	But, by means of \eqref{Multz} we can rewrite  the recurrence formula   \eqref{recF1} as
 \begin{align}\label{recF11}
 	z	\mathcal{Z}^{\kappa,\rho}_{m,n}(z,\bz)= (\kappa+m)\bz  \mathcal{Z}^{\kappa,\rho-1}_{m-1,n+1}(z,\bz) -(n+\rho)(1-|z|^2)\mathcal{Z}^{\kappa+1,\rho-1}_{m-1,n}(z,\bz).
 \end{align}
Moreover, we can prove the following 
	\begin{align}\label{recF2}
	\mathcal{Z}^{\kappa-1,\rho-1}_{m+1,n+1}&=[\kappa|z|^2+(m-n-\rho)(1-|z|^2)]  \mathcal{Z}^{\kappa,\rho}_{m,n}-m(n+\rho+\kappa+1)\overline{z}(1-|z|^2)\mathcal{Z}^{\kappa+1,\rho}_{m-1,n}.
\end{align}
Indeed, it follows from the use \eqref{Oop}, leading to  
$$z \frac{\partial }{\partial_z }(\mathcal{Z}^{\kappa,\rho}_{m,n})-(n-m)\mathcal{Z}^{\kappa,\rho}_{m,n}=\overline{z}m(n+\rho+\kappa+1)\mathcal{Z}^{\kappa+1,\rho}_{m-1,n},$$
combined with the  derivation formula 
$$ (1-|z|^2) \frac{\partial }{\partial_z }(\mathcal{Z}^{\kappa,\rho}_{m,n})=-\rho(1-|z|^2)\mathcal{Z}^{\kappa,\rho+1}_{m,n-1}+\kappa\overline{z}\mathcal{Z}^{\kappa,\rho}_{m,n}-\mathcal{Z}^{\kappa-1,\rho}_{m+1,n} .$$
The latter one follows from the Rodrigues Formula.\eqref{betZpol}
In the sequel, we obtain non-trivial recurrence formulas of Nielsen type for the fractional Zernike functions. This follows as specific cases of the so-called Burchnall representation type formulas for the fractional Zernike functions. 
To the exact statement we let $\bold{a} ^{\kappa,\rho,m,n}_{q,j,\ell,k}$ and $\bold{b}^{\kappa,\rho}_{m,n,j,k}$ respectively, be the constants given by 
\begin{align}\label{cst1}
	 \bold{a} ^{\kappa,\rho,m,n}_{q,j,\ell,k}:= \varepsilon^*_{\rho,m-j} \frac{(-1)^{m+j+\ell}  m!q! \Gamma(\rho+1) \Gamma(\kappa+m+1) }{\ell!(m-j)!(j-\ell)!(q-\ell)! \Gamma(\rho-m+j+1) \Gamma(\kappa+m+n+1)}   
\end{align}
and
\begin{align}\label{cst2} \bold{b}^{\kappa,\rho,m,n}_{j,k} := \frac{(-1)^{j+k} m!n! \Gamma(\kappa+m+n+1)}{j!k!(m-j)!(n-k)! \Gamma(\kappa+m+k+1)}.
\end{align}

\begin{proposition}
	Let $\kappa,\rho,m$ and $n$ be as above. Let $p$ be a nonnegative integer and $u$ a real such that $u\geq \max(-\kappa,-1)$. Then, we have 
	\begin{align}
		\mathcal{Z}^{\kappa,\rho}_{m, n+q}(z,\bz) 
		&=   z^{q} \sum_{j=0}^{m} \sum_{\ell=0}^{j\wedge q}  
		  \bold{a} ^{\kappa,\rho,m,n}_{q,j,\ell,k}    \left( \frac{1-|z|^2}{z}\right)^{m+\ell-j}
		 \mathcal{Z}^{\kappa+m+\ell-j}_{j-\ell, n}(z,\bz),
		\label{Burch11a}
			\end{align} 
		\begin{align}
		\mathcal{Z}^{\kappa,\rho}_{m, n+q}(z,\bz) 
		&= \frac{m!q!\Gamma(\kappa+m
			+1)}{\Gamma(\kappa+m +n+1)} z^{q}\sum_{j=0}^{m\wedge q}\sum_{k=0}^{n}   \frac{(-1)^{j}   }{j!(m-j)!(q-j)!}  \left( \frac{1-|z|^2}{z}\right) ^{j}
		\mathcal{Z}^{\kappa+j,\rho}_{m-j, n}(z,\bz)
		 \label{Burch22b}
			\end{align}
		and 
	\begin{align} 	  \mathcal{Z}^{\kappa+u,\rho}_{m, n}(z,\bz)
		&=  \frac{\Gamma(\kappa+u+m
			+1)}{ \Gamma(\kappa+u+m
			+n+1)}
		 \sum_{j=0}^{m}  \sum_{k=0}^{n}  (-1)^{j+k} \bold{b}^{\kappa,\rho,m,n}_{j,k}
		 \mathcal{Z}^{u-j-k}_{j,k}(z,\bz)
		. 
		\label{Burch33b}
	\end{align} 
\end{proposition}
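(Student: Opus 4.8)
The plan is to read all three identities off from the Rodrigues-type representation \eqref{betZpol}, the first two directly by the Leibniz rule for higher derivatives of a product (simplified through \eqref{derjmonomial}), and the third through the explicit monomial expansion \eqref{Zmnexp} that \eqref{betZpol} yields. In every case the combinatorial constants in \eqref{cst1} and \eqref{cst2} appear as the products of binomial coefficients and Gamma-ratios generated by these expansions, and the symbol $\varepsilon^*$ records that $\partial_z^{\,p}(z^{\rho})$ vanishes as soon as $p>\rho$ when $\rho$ is a nonnegative integer.

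The most transparent is \eqref{Burch22b}. I would apply \eqref{betZpol} to $\mathcal{Z}^{\kappa,\rho}_{m,n+q}$ and factor $z^{n+q+\rho}=z^{q}\,z^{n+\rho}$. A single Leibniz expansion of $\partial_z^{m}\big(z^{q}\,z^{n+\rho}(1-|z|^2)^{\kappa+m}\big)$ gives $\sum_{j}\binom{m}{j}\,\partial_z^{j}(z^{q})\,\partial_z^{m-j}\big(z^{n+\rho}(1-|z|^2)^{\kappa+m}\big)$. The first factor is \eqref{derjmonomial}; the second factor is, up to $(-1)^{m-j}z^{\rho}(1-|z|^2)^{\kappa+j}$, precisely the Rodrigues expression \eqref{betZpol} of $\mathcal{Z}^{\kappa+j,\rho}_{m-j,n}$, since the exponent $(\kappa+j)+(m-j)=\kappa+m$ is preserved. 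Cancelling the global $z^{-\rho}(1-|z|^2)^{-\kappa}$ converts the surviving powers into $\big((1-|z|^2)/z\big)^{j}$, and the range collapses to $0\le j\le m\wedge q$ because $\partial_z^{j}(z^{q})=0$ for $j>q$.

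For \eqref{Burch11a} I would use the finer factorization $z^{n+q+\rho}=z^{\rho}\,z^{q}\,z^{n}$ and apply Leibniz twice. The first step strips $z^{\rho}$ with $m-j$ derivatives, producing $\Gamma(\rho+1)/\Gamma(\rho-m+j+1)$ together with $\varepsilon^*_{\rho,m-j}$; the second strips $z^{q}$ with $\ell$ derivatives, producing $q!/(q-\ell)!$ and the range $0\le\ell\le j\wedge q$. The leftover $\partial_z^{\,j-\ell}\big(z^{n}(1-|z|^2)^{\kappa+m}\big)$ is turned into the classical Zernike polynomial $\mathcal{Z}^{\kappa+m+\ell-j}_{j-\ell,n}$ by \eqref{frZcZ} (equivalently the classical Rodrigues formula \eqref{DiscePol}), whose weight is forced to be $\kappa+m-(j-\ell)=\kappa+m+\ell-j$. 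Collecting the powers of $z$ and of $1-|z|^2$ leaves the prefactor $z^{q}\big((1-|z|^2)/z\big)^{m+\ell-j}$ and a coefficient that matches \eqref{cst1}; as a consistency check, setting $q=0$ (which forces $\ell=0$) recovers the connection formula \eqref{Zz}.

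The delicate one is \eqref{Burch33b}, a weight-shift connection formula expressing the fractional function of weight $\kappa+u$ entirely in terms of the classical polynomials $\mathcal{Z}^{u-j-k}_{j,k}$ of weight $u-j-k$. A single Leibniz split is not enough here, so I would instead expand $\mathcal{Z}^{\kappa+u,\rho}_{m,n}$ by \eqref{Zmnexp}, expand each $\mathcal{Z}^{u-j-k}_{j,k}$ on the right by its own monomial form coming from \eqref{DiscePol}, and match the two sides in the basis $\{(1-|z|^2)^{s}z^{a}\bz^{b}\}$, i.e.\ determine the connection coefficients. The main obstacle should be precisely this matching: it reduces to a terminating hypergeometric sum over the inner index whose collapse to the closed form \eqref{cst2} calls for a Chu--Vandermonde (or Saalsch\"utz) summation, all while tracking the ranges $0\le j\le m$, $0\le k\le n$ and the Gamma-ratio $\Gamma(\kappa+u+m+1)/\Gamma(\kappa+u+m+n+1)$. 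Once that summation identity is secured, the three representations follow.
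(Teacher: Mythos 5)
Your treatment of \eqref{Burch11a} and \eqref{Burch22b} is correct and is essentially the paper's own computation in a more direct packaging. The paper first introduces the operator $B^{\kappa,\rho}_{m,n}$ of \eqref{Ab}, derives two general Burchnall-type operational formulas by running Leibniz ``outside-in'' and ``inside-out'' for an arbitrary $f$, and only then specializes $f=z^q$ via $B^{\kappa,\rho}_{m,n}(z^q)=(-1)^{m+n}(\kappa+m+1)_n\,z^{\rho}(1-|z|^2)^{\kappa}\,\mathcal{Z}^{\kappa,\rho}_{m,n+q}$; your splittings $z^{n+q+\rho}=z^{\rho}z^{q}z^{n}$ and $z^{n+q+\rho}=z^{q}z^{n+\rho}$ applied straight to \eqref{betZpol} are exactly these two expansions with $f=z^q$ pre-substituted, and your key identifications check out: $(\kappa+j)+(m-j)=\kappa+m$ makes $\partial_z^{m-j}\big(z^{n+\rho}(1-|z|^2)^{\kappa+m}\big)$ the Rodrigues expression of $\mathcal{Z}^{\kappa+j,\rho}_{m-j,n}$, and the conversion of $\partial_z^{j-\ell}\big(z^{n}(1-|z|^2)^{\kappa+m}\big)$ into $\mathcal{Z}^{\kappa+m+\ell-j}_{j-\ell,n}$ via \eqref{frZcZ} produces precisely the Pochhammer $1/(\kappa+m+1)_n$ hidden in \eqref{cst1}; your $q=0$ consistency check against \eqref{Zz} is also right. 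One caveat you should make explicit: your route for \eqref{Burch22b} yields the coefficient $(-1)^{j}m!\,q!/\big(j!(m-j)!(q-j)!\big)$ with \emph{no} Gamma-ratio prefactor and no $k$-sum — the Gamma factors cancel, as one sees already at $m=0$, where the printed right-hand side of \eqref{Burch22b} would give $(n+1)\Gamma(\kappa+1)/\Gamma(\kappa+n+1)\,z^{n+q}$ instead of $z^{n+q}$ — so your computation in fact recovers the corrected form of the display (the paper's own specialization of \eqref{Burch22} gives the same), and you should flag that discrepancy rather than claim a silent match.

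For \eqref{Burch33b} there is a genuine gap. You reduce the whole identity to ``a terminating hypergeometric sum whose collapse calls for Chu--Vandermonde'' and leave that summation unverified; since that sum \emph{is} the entire content of the formula, nothing is proved. Worse, the matching you propose cannot succeed against the statement as printed: the constants $\bold{b}^{\kappa,\rho,m,n}_{j,k}$ of \eqref{cst2} contain no $\rho$, so the right-hand side of \eqref{Burch33b} is $\rho$-independent, while the left-hand side is not (e.g.\ $\mathcal{Z}^{\kappa+u,\rho}_{1,0}=(\kappa+u+\rho+1)\bz-\rho z^{-1}$); the summand must carry the companion factor $\mathcal{Z}^{\kappa+j+k,\rho}_{m-j,n-k}$, which the display drops. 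The paper's proof makes this transparent and avoids any summation identity: it specializes the inside-out formula \eqref{Burch22} to $f=(1-|z|^2)^{u}$, using $B^{\kappa,\rho}_{m,n}\big((1-|z|^2)^{u}g\big)=B^{\kappa+u,\rho}_{m,n}(g)$ together with the classical Rodrigues formula \eqref{DiscePol} in the form $\partial_z^{j}\partial_{\bz}^{k}(1-|z|^2)^{u}=(-1)^{j+k}(1-|z|^2)^{u-j-k}\mathcal{Z}^{u-j-k}_{j,k}$, and the prefactor $\Gamma(\kappa+u+m+1)/\Gamma(\kappa+u+m+n+1)$ comes from normalizing $B^{\kappa+u,\rho}_{m,n}(1)$. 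You could stay entirely inside your own framework and still get this in one stroke: in the Rodrigues formula for $\mathcal{Z}^{\kappa+u,\rho}_{m,n}$, convert $z^{n}(1-|z|^2)^{\kappa+u+m}$ by the $\partial_{\bz}^{n}$ trick you already used for \eqref{Burch11a}, split off the factor $(1-|z|^2)^{u}$, and apply a double Leibniz, recognizing its mixed derivatives as classical Zernike polynomials — no Chu--Vandermonde needed.
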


\begin{proof}
	Considering the operator 
	\begin{align}\label{Ab}
		B^{\kappa,\rho}_{m,n}(f)&= \frac{\partial^m}{\partial{z}^m} \left( z^{\rho}\frac{\partial^n}{\partial{\overline{z}}^n}\left( (1-|z|^2)^{\kappa+m+n}f\right) \right) , 
	\end{align} 
	for every sufficiently differentiable function $f$. Making use of the Leibnitz formula applied  from outside to inside we arrive at the Burchnall type formula  
	\begin{align}
		B^{\kappa,\rho}_{m,n}(f)
		&=\sum_{j=0}^{m}\binom{m}{j} \frac{\partial^{m-j}}{\partial{z^{m-j}}}(z^{\rho} ) \frac{\partial^j}{\partial{z^j}} \left(  \frac{\partial^{n}}{\partial{\bz^{n}}} [(1-|z|^2)^{\kappa+m+n} f]\right) \label{Burch1}   
		\\
				&=z^\rho (1-|z|^2)^{\kappa+m }
		\sum_{j=0}^{m} \sum_{\ell=0}^{j} \sum_{k=0}^{n} 
		\bold{a} ^{\kappa,\rho}_{m,n,j,\ell,k} z^{j-m} (1-|z|^2)^{k+\ell-j}
		\mathcal{Z}^{\kappa+m+k+\ell-j}_{j-\ell, n-k}(z,\bz)
		\frac{\partial^{\ell+k}}{\partial z^{\ell}\partial \bz^{k}}(f) ,  \nonumber	
	\end{align} 
where the involved constant is given by	$$  \bold{a} ^{\kappa,\rho}_{m,n,j,\ell,k} :=  (-1)^{m+n+k}  \frac{ n! (q-\ell)!   \Gamma(\kappa+m+n+1)}{q! k! (n-k)!   \Gamma(\kappa+m+1) }  \bold{a} ^{\kappa,\rho,m,n}_{q,j,\ell}.$$
Similarly we get  (by Leibnitz formula from inside to outside)
	\begin{align}
		B^{\kappa,\rho}_{m,n}(f)
		&=\frac{\partial^{m}}{\partial{z^{m}}}\left( z^\rho\left[ \sum_{k=0}^{n} \binom{n}{k} \frac{\partial^{n-k}}{\partial{\bz^{n-k}}}\left( (1-|z|^2)^{\kappa+m+n}\right) 
		\frac{\partial^{k}}{\partial{\bz^{k}}}  (f)\right]\right) \nonumber
		\\
		&=(-1)^{m+n} z^\rho 
		 \sum_{j=0}^{m}\sum_{k=0}^{n} \bold{b}^{\kappa,\rho}_{m,n,j,k} 
		  (1-|z|^2)^{\kappa+j+k}
		\mathcal{Z}^{\kappa+j+k,\rho}_{m-j, n-k}(z,\bz)
		\frac{\partial^{j+k}}{\partial z^j\partial\bz^k}  (f) .  \label{Burch22}
	\end{align} 
%
	Therefore, since the action of $	B^{\kappa,\rho}_{m,n}$ on the specific case of $f= e_q=z^q$ reduces to 
	\begin{align*} 
		B^{\kappa,\rho}_{m,n}(z^q)
		&= (-1)^{m+n} (\kappa+m+1)_n  z^\rho (1-|z|^2)^{\kappa} \mathcal{Z}^{\kappa,\rho}_{m, n+q}(z,\bz) ,
	\end{align*} 
	we obtain \eqref{Burch11a} (resp. \eqref{Burch22b}) from \eqref{Burch1} (resp. \eqref{Burch22}). The identity \eqref{Burch33b} follows from \eqref{Burch22} by  considering the particular case of   $f(z)=(1-|z|^2)^{u}$ and observing that  $	B^{\kappa,\rho}_{m,n}((1-|z|^2)^{u}g)   =B^{\kappa+u,\rho}_{m,n}(g)$.   
\end{proof}

\begin{remark}
The Burchnall representation in \eqref{Burch22} for $f$ being an holomorphic function simply reads 
\begin{align}
	B^{\kappa,\rho}_{m,n}(f) 
	&=(-1)^{m+n} z^\rho h^\kappa  \sum_{j=0}^{m}\bold{b}^{\kappa,\rho}_{m,n,j,0}  (1-|z|^2)^{j}
	\mathcal{Z}^{\kappa+j,\rho}_{m-j, n}(z,\bz)
	\frac{\partial^{j} (f)}{\partial z^j } .  
\end{align} 
	Analog representation for arbitrary $f$ (not necessary holomorphic) can be developed using the differential operator
	\begin{equation}\label{Aa}
		A^{\rho,\kappa}_{m,n}(f)=\frac{\partial^m}{\partial{z}^m} \left( z^{n+\rho}(1-|z|^2)^{\kappa+m}f\right) .
	\end{equation}
	More precisely, one obtains
	\begin{align}  A^{\rho,\kappa}_{m,n}(f)
		&= (-1)^{m} 	m!	z^\rho h^{\kappa} \sum_{j=0}^{m}\frac{(-1)^j(1-|z|^2)^j }{j! (m-j)!}  \mathcal{Z}^{\kappa+j,\rho}_{m-j,n}(z,\bz) \frac{\partial^{j} f}{\partial{z^{j}}} . \label{BurchA} 
	\end{align} 
	
\end{remark}
\subsection{Generating and bilinear generating functions.}

The aim here is to obtain some generating and bilinear generating functions for the fractional Zernike functions. First, it is worth noting that from Proposition \ref{proppsiJac}
and making use of the generating function for the Jacobi polynomials in \cite[p: 213]{Magnus1966}  we obtain the generating function 
$$\sum_{m=0}^{+\infty}\frac{u^m}{m!} \mathcal{Z}^{\kappa,\rho}_{m,n}(z,\bz)=2^n z^{2n+1-m+\rho+\kappa} \frac{(z-u+R(u,z))^{m-n-\rho}(z+u+R(u,z))^{-\kappa}}{R(u,z)}.$$ 
Here $R(u,z)=1$ for $u=0$ and 
$R(u,z)=(z ^2 +2uz(1-2|z|^2)+z^2(1-2|z|^2)^2)^{1/2}$ when $u \neq 0$.  

The next result gives the expression of special bilinear generating functions
as derivative of the confluent and Gauss hypergeometric functions by means of the partial differential operator 
\begin{align*}
	R_{m}^{\kappa,\rho}f (z) &= \frac{1}{ ( z\bw)^{\rho}(1-|z|^2)^{\kappa} (1-|w|^2)^{\kappa} } 
	\frac{\partial^{2m}}{\partial z^m\partial \bw^m}\left( 
	 (z\bw)^{\rho}  (1-|z|^2)^{\kappa+m} (1-|w|^2)^{\kappa+m}
	f \right)(z)
\end{align*}
for sufficiently differential function $f$.

\begin{proposition} 
	We have 	
	\begin{align}\label{Bilin11}
		\sum_{n=0}^{+\infty} \frac{(a)_n}{n! (c)_n}  \mathcal{Z}^{\kappa,\rho}_{m,n}(z,\bz) \overline{\mathcal{Z}^{\kappa,\rho}_{m,n}(w,\bw)} &= 	R_{m}^{\kappa,\rho} \left(  
		{_1F_1}\left( \begin{array}{c} a  \\ c \end{array}\bigg | z\bw \right) \right)
	\end{align}
	and
	\begin{align}\label{Bilin21}
		\sum_{n=0}^{+\infty}
		\frac{(a)_n (b)_n}{n! (c)_n} \mathcal{Z}^{\kappa,\rho}_{m,n}(z,\bz) \overline{\mathcal{Z}^{\kappa,\rho}_{m,n}(w,\bw)} &=	R_{m}^{\kappa,\rho} \left(  
		{_2F_1}\left( \begin{array}{c} a , b \\ c \end{array}\bigg | z\bw \right) \right).
	\end{align}
\end{proposition}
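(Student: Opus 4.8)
The plan is to reduce both identities to a single computation, namely the action of the operator $R_m^{\kappa,\rho}$ on the monomials $(z\bw)^n$, and then to apply $R_m^{\kappa,\rho}$ term by term to the power series defining ${_1F_1}$ and ${_2F_1}$. Since both hypergeometric functions in the argument $z\bw$ are built out of these monomials with scalar coefficients, namely ${_1F_1}(a;c\,|\,z\bw)=\sum_n (a)_n(z\bw)^n/((c)_n n!)$ and likewise for ${_2F_1}$, establishing the single monomial identity
$$ R_m^{\kappa,\rho}\big( (z\bw)^n \big) = \mathcal{Z}^{\kappa,\rho}_{m,n}(z,\bz)\, \overline{\mathcal{Z}^{\kappa,\rho}_{m,n}(w,\bw)} $$
will immediately yield both \eqref{Bilin11} and \eqref{Bilin21} at once.

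First I would prove this monomial identity. The crucial observation is that $R_m^{\kappa,\rho}$ separates in the two pairs of variables: the derivative $\partial^m/\partial z^m$ treats $\bz,w,\bw$ as constants while $\partial^m/\partial \bw^m$ treats $z,\bz,w$ as constants, and the interior factor splits as
$$ (z\bw)^{n+\rho}(1-|z|^2)^{\kappa+m}(1-|w|^2)^{\kappa+m} = \big[\, z^{n+\rho}(1-|z|^2)^{\kappa+m}\,\big]\cdot\big[\, \bw^{n+\rho}(1-|w|^2)^{\kappa+m}\,\big]. $$
Hence the mixed derivative factors into a $z$-derivative of the first bracket times a $\bw$-derivative of the second. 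The first is exactly the interior of the Rodrigues formula \eqref{betZpol}, so it equals $(-1)^m z^{\rho}(1-|z|^2)^{\kappa}\mathcal{Z}^{\kappa,\rho}_{m,n}(z,\bz)$; applying complex conjugation to \eqref{betZpol} together with the symmetry $\overline{\mathcal{Z}^{\kappa,\rho}_{m,n}(z,\bz)}=\mathcal{Z}^{\kappa,\rho}_{m,n}(\bz,z)$ shows the second equals $(-1)^m \bw^{\rho}(1-|w|^2)^{\kappa}\overline{\mathcal{Z}^{\kappa,\rho}_{m,n}(w,\bw)}$. The outer prefactor $(z\bw)^{-\rho}(1-|z|^2)^{-\kappa}(1-|w|^2)^{-\kappa}$ then cancels precisely the weights $z^{\rho}\bw^{\rho}(1-|z|^2)^{\kappa}(1-|w|^2)^{\kappa}$ produced, while $(-1)^m(-1)^m=1$, leaving the claimed bilinear product.

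With the monomial identity established, I would substitute the series expansions of ${_1F_1}$ and ${_2F_1}$ and interchange the operator $R_m^{\kappa,\rho}$ with the infinite summation. The main (and essentially only) obstacle is to justify this interchange: one must verify that the series $\sum_n c_n(z\bw)^n$, with $c_n$ the respective hypergeometric coefficients, may be differentiated term by term up to order $m$ in $z$ and $m$ in $\bw$. This follows from the standard fact that a convergent power series may be differentiated termwise inside its domain of convergence, the radius of convergence in the variable $z\bw$ being at least $1$ for ${_2F_1}$ and infinite for ${_1F_1}$, so that on any compact subset of $\D^{*}\times\D^{*}$ one has $|z\bw|<1$ and uniform convergence of the differentiated series; the smooth weights $(z\bw)^{\pm\rho}$ and $(1-|z|^2)^{\pm\kappa}(1-|w|^2)^{\pm\kappa}$ are bounded there and cause no difficulty. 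Once the interchange is legitimate, applying $R_m^{\kappa,\rho}$ termwise and invoking the monomial identity reproduces exactly the left-hand sides of \eqref{Bilin11} and \eqref{Bilin21}, which completes the proof.
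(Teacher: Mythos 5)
Your proposal is correct and follows essentially the same route as the paper: both proofs rest on the Rodrigues formula \eqref{betZpol}, which makes the mixed operator $R_m^{\kappa,\rho}$ factor over the two variable pairs so that $R_m^{\kappa,\rho}\bigl((z\bw)^n\bigr)=\mathcal{Z}^{\kappa,\rho}_{m,n}(z,\bz)\,\overline{\mathcal{Z}^{\kappa,\rho}_{m,n}(w,\bw)}$, after which termwise application to the ${_1F_1}$ and ${_2F_1}$ series yields \eqref{Bilin11} and \eqref{Bilin21}. Your explicit justification of interchanging $R_m^{\kappa,\rho}$ with the infinite sum (via uniform convergence of the termwise-differentiated series on compact subsets where $|z\bw|<1$) is a welcome detail that the paper leaves implicit in its ``readily follows''.
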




\begin{proof}
	This readily follows by means of the Rodrigues' formula for  $\mathcal{Z}^{\kappa,\rho}_{m,n}(z,\bz)$. Indeed we can rewrite the left-hand side in \eqref{Bilin11} as
	\begin{align*}
		\frac{1}{ ( z\bw)^{\rho}(1-|z|^2)^{\kappa} (1-|w|^2)^{\kappa} } 
		\frac{\partial^{2m}}{\partial z^m \partial \bw^m}  \left(  (z\bw)^{\rho} [(1-|z|^2)(1-|w|^2)]^{\kappa+m} \sum_{n=0}^{+\infty}
		\frac{(a)_n}{(c)_n} \frac{z^n}{n!}\right) ,
	\end{align*}
	which reduces further to 
	\eqref{Bilin11}. 
	The formula \eqref{Bilin21} follows in a similar way. 
\end{proof}

\begin{remark} \label{remRepKer}
	For the special values of $a=1$, $b=\kappa+\rho+1$ and $c=\rho+1$ with $\rho =\beta-2\eta$ and $\kappa=\kappa_m=\alpha-2(\gamma+m) -1$,  the quantity ${n!(c)_n}/{(a)_n(b)_n} $ reduces to be the square norm of $\psi^{\gamma,\eta}_{m,n}$ in \eqref{normZernike} up to a multiplicative constant $d^{\kappa,\rho}_m$ independent of $n$.
	Thus, formula in \eqref{Bilin21} leads to the reproducing kernel  	\begin{align*} 
		K_{\gamma,\eta,m}^{\alpha,\beta}(z,w) &=
		\sum_{n=0}^{+\infty}
		\frac{\psi^{\gamma,\eta}_{m,n}(z,\bz)  \overline{\psi^{\gamma,\eta}_{m,n}(w,\bw) }}{\norm{\psi^{\gamma,\eta}_{m,n}}_{\alpha,\beta}^2} 
	\end{align*}
	of the $m$-th generalized $\beta$-modified Bergman space introduced in Remark \ref{remNoyOrth}. In fact, we have 
	\begin{align*} 
			K_{\gamma,\eta,m}^{\alpha,\beta}(z,w) 
		& = d^{\kappa,\rho}_m  \frac{ [(1-|z|^2)(1-|w|^2)]^{\gamma+m} }{|zw|^{2\eta}} 	R_{m}^{\kappa,\rho} \left(  
		{_2F_1}\left( \begin{array}{c} a , b \\ c \end{array}\bigg | z\bw \right) \right).
	\end{align*}
	A closed formula for $	K_{\gamma,\eta,m}^{\alpha,\beta}(z,w)$ needs further investigation.
\end{remark}
Below, we prove a bilinear generating function for the fractional Zernike function that looks like the Hardy--Hille formula for the generalized Laguerre polynomials. Thus, we deal with
\begin{align}\label{expHH} G^{\kappa,\rho}_n(z,w|t) &:= 	\sum_{m=0}^{+\infty}\frac{t^m \mathcal{Z}^{\kappa,\rho}_{m,n}(z,\bz)\mathcal{Z}^{\kappa,\rho}_{m,n}(\overline{w},w)}{m!(\kappa+1)_m}.
\end{align}

\begin{proposition}\label{BilnGenK2} For every  sufficiently small $t$, the closed expression of 
$G^{\kappa,\rho}_n(z,w|t)$ is given
	\begin{align*}
		G^{\kappa,\rho}_n(z,w|t) 
			&= \frac{(z+tw)^{n+\rho} (\bw + t\bz)^{n+\rho}}{(z\overline{w})^{\rho}(1+t\bz w)^{2(n+\rho)+\kappa+1)}} 	{_2F_1}\left(  \begin{array}{c}-n-\rho ,-n -\rho \\ \kappa+1 \end{array}\bigg | -\frac{t(1-|z|^2)(1-|w|^2)}{(z+tw)(\bw+t\bz)}\right) .
	\end{align*}
\end{proposition}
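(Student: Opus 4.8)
The plan is to collapse the bilinear series \eqref{expHH} to a single scalar generating sum, resum it in $m$ in closed form, and then transform the resulting Gauss function into the terminating shape of the statement. Writing $N=n+\rho$ and inserting \eqref{Zhyp2} for each factor (with $z$ replaced by $\bw$ in $\mathcal{Z}^{\kappa,\rho}_{m,n}(\bw,w)$, so that $1-|z|^2$ becomes $1-|w|^2$), the squared Pochhammer $(\kappa+1)_m^2$ cancels one $(\kappa+1)_m$ from the denominator and the monomials $z^{n-m}\bw^{n-m}$ split off, leaving
\begin{align*}
	G^{\kappa,\rho}_n(z,w|t)=(z\bw)^n\sum_{m=0}^{\infty}\frac{(\kappa+1)_m}{m!}\,\sigma^m\,
	{_2F_1}\!\left(\begin{array}{c}-m,\,A\\ \kappa+1\end{array}\bigg|\,1-|z|^2\right)
	{_2F_1}\!\left(\begin{array}{c}-m,\,A\\ \kappa+1\end{array}\bigg|\,1-|w|^2\right),
\end{align*}
where $A=N+\kappa+1$ and $\sigma=t/(z\bw)$. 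Everything now rests on evaluating this scalar bilinear sum.

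First I would record the one--variable generating function
\begin{align*}
	\sum_{m=0}^{\infty}\frac{(\kappa+1)_m}{m!}\,{_2F_1}\!\left(\begin{array}{c}-m,\,b\\ \kappa+1\end{array}\bigg|\,x\right)s^m
	=(1-s)^{\,b-\kappa-1}\big(1-s(1-x)\big)^{-b},
\end{align*}
obtained by expanding the ${_2F_1}$, interchanging the two sums and using $\sum_{m\ge i}(\kappa+1)_m s^m/(m-i)!=(\kappa+1)_i s^i(1-s)^{-\kappa-1-i}$. To treat the genuinely bilinear sum I would expand only the first hypergeometric factor and pull the factor $(-m)_i$ out of the $m$--summation via the operator identity $(-m)_i\sigma^m=(-1)^i\sigma^i\partial_\sigma^i\sigma^m$; the surviving $m$--sum is then the one--variable formula above with $b=A$, that is $\Psi(\sigma)=(1-\sigma)^N(1-\sigma|w|^2)^{-A}$. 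This rewrites the bilinear sum as $\sum_i \tfrac{(A)_i}{(\kappa+1)_i\,i!}\,(-(1-|z|^2))^i\,\sigma^i\,\Psi^{(i)}(\sigma)$. Expanding $\Psi^{(i)}$ by the Leibniz rule and reindexing identifies this, up to the factor $(1-\sigma)^N(1-\sigma|w|^2)^{-A}$, with Appell's function $F_1(A;-N,A;\kappa+1;X,Y)$, where $X=-(1-|z|^2)\sigma/(1-\sigma)$ and $Y=-(1-|z|^2)\sigma|w|^2/(1-\sigma|w|^2)$.

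The decisive simplification is that the lower parameter equals the sum of the two upper ones, $\kappa+1=(-N)+A$. This is exactly the degenerate case in which $F_1$ collapses to a single Gauss function,
\begin{align*}
	F_1(A;-N,A;\kappa+1;X,Y)=(1-Y)^{-A}\,{_2F_1}\!\left(\begin{array}{c}A,\,-N\\ \kappa+1\end{array}\bigg|\,\frac{X-Y}{1-Y}\right),
\end{align*}
and a Pfaff transformation applied to the parameter $A$ then gives
\begin{align*}
	{_2F_1}\!\left(\begin{array}{c}A,\,-N\\ \kappa+1\end{array}\bigg|\,\zeta\right)
	=(1-\zeta)^{N}\,{_2F_1}\!\left(\begin{array}{c}-N,\,-N\\ \kappa+1\end{array}\bigg|\,\frac{\zeta}{\zeta-1}\right),
\end{align*}
which produces the terminating form of the statement. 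Substituting $\sigma=t/(z\bw)$ and collecting the accumulated powers of $1-\sigma$, $1-\sigma|w|^2$, $1-Y$ and $1-\zeta$ against $(z\bw)^n$ reorganizes them into the binomial prefactors $(z+tw)^{N}$, $(\bw+t\bz)^{N}$, $(z\bw)^{-\rho}$, $(1+t\bz w)^{-(2N+\kappa+1)}$ and into the single hypergeometric argument displayed in the proposition.

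I expect the main obstacle to be the Appell reduction together with the final bookkeeping: one must organize the Leibniz expansion so that the double series is recognized as $F_1$ with the special relation $\kappa+1=b+b'$, and then check that the several power prefactors coalesce exactly into the clean binomials and into the single argument of the ${_2F_1}$. One must also justify the interchanges of summation and the termwise differentiation for $t$ small (where all series converge), and separate the case $\rho\in\{0,1,2,\dots\}$, in which every series terminates and $G^{\kappa,\rho}_n$ is a genuine hypergeometric polynomial, from the non--integer case, where the same chain of identities holds between convergent series and yields the stated formula by continuation in $\rho$.
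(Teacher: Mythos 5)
Your argument is correct in substance, but it takes a genuinely different route from the paper. The paper performs the same first reduction --- collapsing \eqref{expHH} to a scalar bilinear hypergeometric sum, via the representation \eqref{Zhyper} with expansion variable $t\bz w$ rather than your \eqref{Zhyp2} with $\sigma=t/(z\bw)$ --- and then concludes in one line by citing Meixner's bilinear relation from Erd\'elyi et al.\ \cite[Eq.\ (12), p.\ 85]{Erdelyi1953}. You instead re-derive that Meixner-type formula from scratch, and I checked each link of your chain: the one-variable generating function is right (the inner sum is $\sum_{m\geq i}(\kappa+1)_m s^m/(m-i)! = (\kappa+1)_i s^i(1-s)^{-\kappa-1-i}$); the trick $(-m)_i\sigma^m=(-1)^i\sigma^i\partial_\sigma^i\sigma^m$ legitimately pulls the $m$-sum into $\Psi^{(i)}(\sigma)$ with $\Psi(\sigma)=(1-\sigma)^N(1-\sigma|w|^2)^{-A}$; the Leibniz expansion does produce Appell's $F_1(A;-N,A;\kappa+1;X,Y)$; and the crucial degeneracy $-N+A=\kappa+1$ (lower parameter equal to the sum of the two inner upper ones) together with the Pfaff transformation (using $c-a=-N$) delivers the terminating $(-N,-N)$ form. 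What your approach buys is a self-contained proof that also explains structurally why the repeated parameter $-N$ appears --- it is exactly the $c=b+b'$ collapse of $F_1$; what the paper's citation buys is brevity.

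One concrete caveat on your final bookkeeping. Carrying your substitutions through gives $1-\sigma|w|^2=(z-tw)/z$, $1-Y=z(1-t\bz w)/(z-tw)$, $1-\zeta=(z-tw)(\bw-t\bz)/\bigl((z\bw-t)(1-t\bz w)\bigr)$ and $\zeta/(\zeta-1)=t(1-|z|^2)(1-|w|^2)/\bigl((z-tw)(\bw-t\bz)\bigr)$, so that your chain actually ends at
\[
G^{\kappa,\rho}_n(z,w|t)=\frac{(z-tw)^{N}(\bw-t\bz)^{N}}{(z\bw)^{\rho}\,(1-t\bz w)^{2N+\kappa+1}}\;{_2F_1}\left( \begin{array}{c} -N , -N \\ \kappa+1 \end{array}\bigg | \,\frac{t(1-|z|^2)(1-|w|^2)}{(z-tw)(\bw-t\bz)} \right),
\]
i.e.\ the displayed statement with $t$ replaced by $-t$; the prefactors do \emph{not} coalesce into the printed $(z+tw)^{N}$, $(\bw+t\bz)^{N}$, $(1+t\bz w)^{-(2N+\kappa+1)}$ as you assert. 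In fact the printed proposition carries the same $t\mapsto-t$ slip relative to the definition \eqref{expHH}: at $n+\rho=0$ the series is $\sum_m (\kappa+1)_m (t\bz w)^m/m!=(1-t\bz w)^{-\kappa-1}$, whereas the statement as printed evaluates to $(1+t\bz w)^{-\kappa-1}$. So your derivation lands on the \emph{correct} closed form; the only repair needed is to state the signs honestly (or note the typo in the statement) instead of claiming exact agreement with the printed $+t$ binomials. Your closing remarks on justifying the interchanges for small $t$ and on treating integer $\rho$ by termination versus non-integer $\rho$ by continuation are appropriate and suffice.
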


\begin{proof}
	Using the  hypergeometric representation (Proposition \ref{proppsiJac}) we can rewrite $G^{\kappa,\rho}_n(z,w|t)$ as 
	\begin{align*}
		G^{\kappa,\rho}_n(z,w|t) & = (z\bw)^{n}\sum_{m=0}^{+\infty}\frac{(\kappa+1)_m}{m!} (t\overline{z}w )^m \\& 
		{_2F_1}\left(  \begin{array}{c}-m,-n-\rho \\ \kappa+1 \end{array}\bigg | 1-\frac{1}{|z|^2}\right) 
		 {_2F_1}\left(  \begin{array}{c}-m,-n-\rho \\ \kappa+1 \end{array}\bigg | 1-\frac{1}{|w|^2}\right) .
	\end{align*}
	Thus, one concludes for the result in Proposition \ref{BilnGenK2} by making use of the Meixner bilinear relation in  \cite[Eq. (12), p. 85]{Erdelyi1953}. 
\end{proof}

%
%


\subsection{Integral representations}
By means of the classical integral representations for the Gauss hypergeometric functions in the right hand side of \eqref{Zhyp2} and \eqref{ZhypF} or for the Jacobi polynomials in  \eqref{ZJacobi}, we can derive different integral representations for $\mathcal{Z}_{m,n}^{\rho,\kappa} (z,\bar z)$. 
However, we give below some non-trivial ones.
The first one is based on the Cauchy integral formula for holomorphic functions and following in spirit Kazantsev and Bukhgeimwe idea  \cite{KazantsevBukhgeim2004}. 

\begin{theorem}
	The fractional Zernike functions $\mathcal{Z}^{\kappa,\rho}_{m,n}$ admits  the following integral representation
	\begin{align} \label{IntRepbZ}
		\mathcal{Z}^{\kappa,\rho}_{m,n}(z,\bz)&= \dfrac{ (-1)^m m!}{2\pi i}  z^{-\rho}  (1-|z|^2)^{-\kappa}  	\oint_{\mid t\mid=1} t^{n+m+\rho+\kappa}\dfrac{        
			(\overline{t}-\bz)^{\kappa+m} } {(t-z)^{m+1}} dt .
	\end{align}
\end{theorem}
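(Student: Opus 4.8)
The plan is to read the right-hand side of \eqref{IntRepbZ} as the Cauchy integral formula for the $m$-th derivative that appears in the Rodrigues formula \eqref{betZpol}, once the boundary integrand has been simplified by means of the relation $\overline{t}=1/t$ valid on the unit circle. First I would freeze $\bz$ as a parameter independent of $z$, exactly as the Wirtinger derivative $\partial^m/\partial z^m$ in \eqref{betZpol} does, and introduce the auxiliary function
\begin{align*}
	g(t):= t^{n+\rho}(1-t\bz)^{\kappa+m},
\end{align*}
so that $g(z)=z^{n+\rho}(1-|z|^2)^{\kappa+m}$ and \eqref{betZpol} reads
\begin{align*}
	\mathcal{Z}^{\kappa,\rho}_{m,n}(z,\bz)=(-1)^m z^{-\rho}(1-|z|^2)^{-\kappa}\, g^{(m)}(z).
\end{align*}
For $|z|<1$ the factor $1-t\bz$ stays bounded away from $0$ on the closed disc (since $|t\bz|\le|\bz|<1$) and has argument confined to a small neighbourhood of $1$, so with the principal branch $g$ is holomorphic in a neighbourhood of $\overline{\D}$; the Cauchy integral formula for derivatives then yields $g^{(m)}(z)=\tfrac{m!}{2\pi i}\oint_{|t|=1}g(t)(t-z)^{-m-1}\,dt$.

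The next step is to simplify $g$ on the contour. On $|t|=1$ one has $\overline{t}=1/t$, whence $1-t\bz=t(\overline{t}-\bz)$ and therefore $(1-t\bz)^{\kappa+m}=t^{\kappa+m}(\overline{t}-\bz)^{\kappa+m}$. Substituting this into $g$ collects the powers of $t$ into the single exponent $n+m+\rho+\kappa$ and produces precisely the factor $(\overline{t}-\bz)^{\kappa+m}$ occurring in \eqref{IntRepbZ}, so that
\begin{align*}
	g^{(m)}(z)=\frac{m!}{2\pi i}\oint_{|t|=1}\frac{t^{n+m+\rho+\kappa}(\overline{t}-\bz)^{\kappa+m}}{(t-z)^{m+1}}\,dt.
\end{align*}
Multiplying through by $(-1)^m z^{-\rho}(1-|z|^2)^{-\kappa}$ and invoking the displayed Rodrigues identity above delivers \eqref{IntRepbZ} immediately.

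I expect the only delicate point to be the holomorphy and single-valuedness needed to justify the Cauchy formula when $\kappa$ and $\rho$ are not integers. The factor $(1-t\bz)^{\kappa+m}$ is harmless, its argument lying in a small disc around $1$ where the principal branch is analytic; the genuine difficulty is the branch point of $t^{n+\rho}$ at the origin when $n+\rho\notin\Z$, which makes the integrand wind by a factor $e^{2\pi i\rho}$ around $|t|=1$ and hence renders the closed contour integral well defined, without a branch specification, precisely when $\rho\in\Z$. The cleanest way to handle this is to establish \eqref{IntRepbZ} first for $\rho\in\{0,1,2,\dots\}$, where $g$ is an honest holomorphic function and the argument above is fully rigorous, and then to remove the restriction by fixing a branch cut issued from $0$ and continuing in the parameter $\rho$, using that $\mathcal{Z}^{\kappa,\rho}_{m,n}$ is itself single-valued in $z$ (its expansion \eqref{Zmnexp} involves only integer powers of $z$ and $\bz$) together with the reduction $\mathcal{Z}^{\kappa,\rho}_{m,n}=z^{-[\rho]}\mathcal{Z}^{\kappa,\widetilde{\rho}}_{m,n+[\rho]}$ recorded in the introduction.
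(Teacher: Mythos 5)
Your core computation is the same as the paper's. The paper expands $(1-|z|^2)^{\kappa+m}$ binomially in powers of $\bz$, applies the Cauchy formula for the $m$-th derivative to each monomial $z^{n+\rho+j}$, and then resums under the integral sign to reach $\oint_{|t|=1} t^{n+\rho}(1-t\bz)^{\kappa+m}(t-z)^{-m-1}\,dt$ before invoking $\overline{t}=1/t$; you apply the Cauchy formula directly to $g(t)=t^{n+\rho}(1-t\bz)^{\kappa+m}$, which collapses the expansion-plus-resummation into a single step. The two routes are equivalent, and for $n+\rho$ a nonnegative integer yours is the cleaner and fully rigorous one: your remark that $|t\bz|\le|\bz|<1$ keeps $1-t\bz$ away from $0$ on $\overline{\D}$ correctly disposes of the branch of $(1-t\bz)^{\kappa+m}$, a point the paper does not even mention.

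Where you go beyond the paper is in flagging the branch point of $t^{n+\rho}$ at the origin, and your diagnosis is exactly right: the integrand returns multiplied by $e^{2\pi i\rho}$ after one circuit, so the closed-contour integral is unambiguous precisely when $\rho\in\Z$. The paper passes over this in silence; its key step $\partial_z^m z^{n+\rho+j}=\frac{m!}{2\pi i}\oint_{|t|=1} t^{n+\rho+j}(t-z)^{-m-1}\,dt$ is simply false for non-integer exponent, since $\varphi_z$ is then not holomorphic near $\overline{\D}$. However, your proposed repair for non-integer $\rho$ does not close the gap. First, agreement of two functions of $\rho$ at the nonnegative integers does not propagate by the identity theorem (the integers accumulate only at infinity; think of $\sin\pi\rho$), so "continuing in the parameter $\rho$" would need a Carlson-type growth hypothesis you have not supplied. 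Second, and worse, no branch convention can make the formula true off the integers: fixing a cut from $0$ that avoids $z$ and deforming the (now open) contour onto the two sides of the cut plus a loop around the pole at $z$ shows that the right-hand side equals $\mathcal{Z}^{\kappa,\rho}_{m,n}(z,\bz)$ plus a cut contribution proportional to $1-e^{2\pi i\rho}$, which vanishes only for $\rho\in\Z$. So your proof is complete exactly where the paper's is (integer $\rho$, in fact in a tidier form), and your continuation sketch should be presented as an open issue rather than a proof of the fractional case — though in fairness the paper's own argument carries precisely the defect you identified, unacknowledged.
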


\begin{proof} Make use of the ordinary binomial expansion with the factorial function \cite[p. 56]{Rainvilla1971},
	$$\displaystyle (1- \xi)^{-a}= \sum\limits_{j=0}^{+\infty}{(a)_{j}} \frac{\xi^{j}}{j!},$$
	to expand the factor $(1-|z|^2)^j$ in the explicit expression of $\mathcal{Z}^{\kappa_m,\rho}_{m,n} $ given by 
	\eqref{Zmnexp}. Also, we need to the fact that
	$$\dfrac{\partial^{m}}{\partial z^{m}}(z^{j+n+\rho})=\dfrac{m!}{2\pi i}\oint_{\mid t\mid=1}\dfrac{t^{j+n+\rho}}{(t-z)^{m+1}}dt,$$
	which follows from the Cauchy integral formula applied to the function $\varphi_z(t)= t^{m+j+\rho}/(t-z)$. Thus, we obtain
	\begin{align*}
		\mathcal{Z}^{\kappa,\rho}_{m,n}(z,\bz) 
		&= (-1)^m z^{-\rho}  (1-|z|^2)^{-\kappa} 
		\sum\limits_{j=0}^{+\infty}\dfrac{(-\kappa-m)_{j}}{j!} \bz^{j} \frac{\partial^m}{\partial z^m}  
		\left( z^{n+\rho+j}\right) \\
		&= \dfrac{ (-1)^m m!}{2\pi i}  z^{-\rho}  (1-|z|^2)^{-\kappa}  
		\oint_{\mid t\mid=1}\dfrac{ t^{n+\rho}}{(t-z)^{m+1}} 
		\left( \sum\limits_{j=0}^{+\infty}(-\kappa-m)_{j} \dfrac{(t\bz)^{j}}{j!}\right)    dt \\
		&= \dfrac{ (-1)^m m!}{2\pi i}  z^{-\rho}  (1-|z|^2)^{-\kappa}  	\oint_{\mid t\mid=1}\dfrac{ t^{n+\rho} (1-t\bz)^{\kappa+m}}{(t-z)^{m+1}} dt \\
		&= \dfrac{ (-1)^m m!}{2\pi i}  z^{-\rho}  (1-|z|^2)^{-\kappa}  	\oint_{\mid t\mid=1} t^{n+m+\rho+\kappa}\dfrac{        
			(\overline{t}-\bz)^{\kappa+m} } {(t-z)^{m+1}} dt   
		.
	\end{align*}	
	This proves \eqref{IntRepbZ}.
\end{proof}

The next integral representation for the fractional Zernike functions appears as corollary of the bilinear generating function 
in Proposition \ref{BilnGenK2}. 	
		
\begin{proposition} Let $\gamma^{\kappa,\rho}_{m,n} $ be as in \eqref{normFrZ}. The fractional Zernike functions have the integral representation
	\begin{align} \label{IntRep2}
		\mathcal{Z}^{\kappa,\rho}_{m,n}(w,\bw) 
		=
		 \frac{m! (\kappa+1)_m \gamma^{\kappa,\rho}_{m,n} } { w^{\rho}t^m  }  &\int_{D}    	
		 \Xi^{\kappa,\rho}_{m,n}(z,w|t)
		 {_2F_1}\left( \begin{array}{c} -m , n+ \kappa+\rho+1 \\ \kappa+1 \end{array}\bigg | 1- |z|^2 \right) 
		 	\\& 
		  	\times  
		 	{_2F_1}\left(  \begin{array}{c}-n-\rho       ,-n -\rho \\ \kappa+1 \end{array}\bigg | - \frac{t(1-|z|^2)(1-|w|^2)}{(w+tz)(\bz+t\bw)}\right) 	d\lambda(z); \nonumber
	\end{align}
	where 
	$$ \Xi^{\kappa,\rho}_{m,n}(z,w|t):=\frac{ \bz^{m-n-\rho}  (w+t z)^{n+\rho} (\bz+t \bw)^{n+\rho}  }{ (1+t z \bw)^{2(n+\rho)+\kappa+1}} (1-|z|^2)^{\kappa}.$$
\end{proposition}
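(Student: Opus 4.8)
The plan is to read the claimed identity \eqref{IntRep2} as a ``Fourier coefficient'' extracted from the Hardy--Hille type kernel $G^{\kappa,\rho}_n(z,w|t)$ of Proposition \ref{BilnGenK2}, using the orthogonality of the family $\{\mathcal{Z}^{\kappa,\rho}_{m,n}\}_{m\geq 0}$ (at fixed $n$) established in Corollary \ref{corOrthnorm}. First I would rewrite the defining series of $G^{\kappa,\rho}_n$ by means of the conjugation symmetry $\overline{\mathcal{Z}^{\kappa,\rho}_{m,n}(w,\bw)}=\mathcal{Z}^{\kappa,\rho}_{m,n}(\bw,w)$, namely
\[
G^{\kappa,\rho}_n(z,w|t)=\sum_{m=0}^{+\infty}\frac{t^m}{m!(\kappa+1)_m}\,\overline{\mathcal{Z}^{\kappa,\rho}_{m,n}(w,\bw)}\,\mathcal{Z}^{\kappa,\rho}_{m,n}(z,\bz),
\]
so that, for fixed $w$ and small $t$, the map $z\mapsto G^{\kappa,\rho}_n(z,w|t)$ is the expansion of an element of $L^{2,\kappa}_\rho(\D)$ in the orthogonal system $(\mathcal{Z}^{\kappa,\rho}_{m,n})_m$.

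Next I would pair this expansion with $\mathcal{Z}^{\kappa,\rho}_{m,n}$ in $L^{2,\kappa}_\rho(\D)$. By Corollary \ref{corOrthnorm} the cross terms vanish and the squared norm is $1/\gamma^{\kappa,\rho}_{m,n}$, whence
\[
\int_{\D}G^{\kappa,\rho}_n(z,w|t)\,\overline{\mathcal{Z}^{\kappa,\rho}_{m,n}(z,\bz)}\,d\mu_{\kappa,\rho}(z)=\frac{t^m}{m!(\kappa+1)_m\,\gamma^{\kappa,\rho}_{m,n}}\,\overline{\mathcal{Z}^{\kappa,\rho}_{m,n}(w,\bw)}.
\]
Solving for $\overline{\mathcal{Z}^{\kappa,\rho}_{m,n}(w,\bw)}$, then taking complex conjugates (the prefactor, $t$ and $\gamma^{\kappa,\rho}_{m,n}$ being real) and using $d\mu_{\kappa,\rho}(z)=(1-|z|^2)^\kappa|z|^{2\rho}d\lambda(z)$, reproduces exactly the prefactor $m!(\kappa+1)_m\gamma^{\kappa,\rho}_{m,n}/(w^\rho t^m)$ of \eqref{IntRep2} once the factor $w^{-\rho}$ coming from $(\bz w)^{-\rho}$ in $\overline{G^{\kappa,\rho}_n}$ is pulled out in front of the integral.

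Then I would substitute the closed form of $G^{\kappa,\rho}_n(z,w|t)$ from Proposition \ref{BilnGenK2} together with the hypergeometric representation \eqref{Zhyp2} of $\mathcal{Z}^{\kappa,\rho}_{m,n}(z,\bz)$ into the integrand. Conjugating the closed form (with $t$ real) turns the binomials $(z+tw)$, $(\bw+t\bz)$, $(1+t\bz w)$ and the $_2F_1$-argument into the $w$-oriented ones $(w+tz)$, $(\bz+t\bw)$, $(1+tz\bw)$ occurring in $\Xi^{\kappa,\rho}_{m,n}(z,w|t)$ and in the second hypergeometric factor, while \eqref{Zhyp2} supplies the first factor ${_2F_1}(-m,n+\kappa+\rho+1;\kappa+1;1-|z|^2)$ together with a pure power and the weight $(1-|z|^2)^\kappa$. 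Gathering the remaining monomial (obtained by combining $|z|^{2\rho}$, the residual factor $\bz^{-\rho}$ and the pure power from \eqref{Zhyp2}) with $(1-|z|^2)^\kappa$ and the $w$-oriented binomials reconstitutes $\Xi^{\kappa,\rho}_{m,n}(z,w|t)$, yielding \eqref{IntRep2}.

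The main obstacle is analytic rather than algebraic: to justify interchanging the summation over $m$ with the $L^{2,\kappa}_\rho$-pairing I must control the series. For this I would use the hypergeometric (or explicit polynomial) expression of $\mathcal{Z}^{\kappa,\rho}_{m,n}$ to bound $\norm{\mathcal{Z}^{\kappa,\rho}_{m,n}}$ against the coefficients $t^m/(m!(\kappa+1)_m)$, establishing normal convergence of the series on compact subsets of $\D$ for $|t|$ small enough, so that term-by-term integration is legitimate; this is precisely the regime ``for every sufficiently small $t$'' in which Proposition \ref{BilnGenK2} is valid. Once convergence is secured, the rest is the bookkeeping of powers and conjugates indicated above, which is routine.
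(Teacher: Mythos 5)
Your proposal is correct and is essentially the paper's own proof: the paper likewise extracts $\mathcal{Z}^{\kappa,\rho}_{m,n}(w,\bw)= \frac{m!\,(\kappa+1)_m\,\gamma^{\kappa,\rho}_{m,n}}{t^m}\,\overline{\scal{G^{\kappa,\rho}_n(\cdot,w|t),\mathcal{Z}^{\kappa,\rho}_{m,n}}_{L^{2,\kappa}_{\rho}(\D)}}$ from the expansion \eqref{expHH} via the orthogonality relation of Corollary \ref{corOrthnorm}, and then substitutes the closed form of Proposition \ref{BilnGenK2} together with the hypergeometric representation \eqref{Zhyp2}, exactly as you do. Your convergence justification for the term-by-term integration is an extra the paper leaves implicit, and the one point of friction in your final bookkeeping (combining $|z|^{2\rho}$, $\bz^{-\rho}$ and the pure power of \eqref{Zhyp2} actually yields $z^{n+\rho-m}$, which matches the stated $\bz^{m-n-\rho}$ only after passing to the equivalent representation \eqref{Zhyp2d}) traces to the paper's own statement of $\Xi^{\kappa,\rho}_{m,n}$, not to your method.
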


	\begin{proof}		
	 Starting from the expansion \eqref{expHH} one gets  
		$$\mathcal{Z}^{\kappa,\rho}_{m,n}(w,\bw)  =   
		\frac{m! (\kappa+1)_m \gamma^{\kappa,\rho}_{m,n} } { t^m  } \overline{\scal{	G^{\kappa,\rho}_n(\cdot,w|t)   , \mathcal{Z}^{\kappa,\rho}_{m,n}}_{L^{2,\kappa}_{\rho}(\D)}}
		.$$
		Next, by the  explicit expression of the kernel function $G^{\kappa,\rho}_n(z,w|t) $ given in Proposition \ref{BilnGenK2} combined with the hypergeometric representation in \eqref{Zhyp2} we derive the integral representation \eqref{IntRep2}.
	\end{proof}

\begin{remark} 	The integral in the right hand side of \eqref{IntRep2} is a rigid integral on complex domain $\D$ in the sense that it is nontrivial and can not be reduced to classical integral on real domains.  
\end{remark}

\subsection{Completeness.}

Here we discuss the completeness of the fractional Zernike functions in $L^{2,\kappa}_{\rho}(\D)$. 
Notice for instance that for $\rho=0,1,2, \cdots$ the functions $z^\rho\mathcal{Z}^{\kappa,\rho}_{m,n}$ for varying $m,n+\rho=0,1,2,\cdots$ constitute an orthogonal basis of $L^{2,\kappa}_{\rho}(\D)$ since they are closely connected to the complex Zernike polynomials  $\mathcal{Z}^{\kappa}_{m,n+\rho}$ 
by \eqref{frZcZ}. 
The latter ones are known  to form an orthogonal basis for the Hilbert space  $L^{2,\kappa}_{0}(\D)$ (see e.g., \cite{Dunkl84,Kanjin2013}).
 A direct proof starts from the observation that each $(\mathcal{Z}^{\kappa,\ell}_{m,n})_{m,n}$ is a polynomial of exact degrees $m$ in $\bz$ and $n$ in $z$, so that 
any $z^n \bz^m$ can be rewritten as   
$$\bz^m z^n = \sum_{j= 0}^m\sum_{k=0}^n a_{j,k}^{m,n} \mathcal{Z}^{\kappa,\ell}_{j,k}(z,\bz) .$$
The coefficients $a_{j,k}^{m,n}$ are explicit and can be computed by the formula
\begin{align*}
	a_{j,k}^{m,n} & =  \gamma^{\kappa,\rho}_{m,n} \int_{\D} \bz^m z^n \overline{ \mathcal{Z}^{\kappa,\ell}_{j,k}}(z,\bz) |z|^{2\rho}(1-|z|^2)^\kappa  d\lambda(z) ,
\end{align*}	
where $\gamma^{\kappa,\rho}_{m,n} $ is as in \eqref{normFrZ}. 
In the sequel, we consider only the case of $\rho$ non-integer $\rho>-1$.

\begin{theorem}\label{Mth5} 
	The functions $ \varUpsilon_{m,s}^{\kappa,\rho} :=    \bz^{-s/2}\mathcal{Z}_{m,m+s/2}^{\kappa,\rho-s/2}$,
	for varying nonnegative integer $m$ and varying integer $s$, form an orthogonal complete system  in $L^{2,\kappa}_{\rho}(\D)$.
\end{theorem}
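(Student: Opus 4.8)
The plan is to reduce the entire statement to the radial direction via the Jacobi representation and then assemble everything from the classical completeness of Jacobi polynomials. First I would record that the two descriptions of $\varUpsilon_{m,s}^{\kappa,\rho}$ agree: the identity \eqref{Multz}, which is immediate from the Rodrigues formula \eqref{betZpol} and valid for real exponents, gives $z^{s/2}\mathcal{Z}^{\kappa,\rho}_{m,m}=\mathcal{Z}^{\kappa,\rho-s/2}_{m,m+s/2}$, whence $\bz^{-s/2}\mathcal{Z}^{\kappa,\rho-s/2}_{m,m+s/2}=(z/\bz)^{s/2}\mathcal{Z}^{\kappa,\rho}_{m,m}=z^{s}|z|^{-s}\mathcal{Z}^{\kappa,\rho}_{m,m}$, in accordance with \eqref{trabbasis}. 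Since $\rho$ is non-integer, \eqref{ZJacobi} applies at $n=m$ and yields the decisive fact that $\mathcal{Z}^{\kappa,\rho}_{m,m}(z,\bz)=m!\,P^{(\kappa,\rho)}_{m}(2|z|^2-1)$ is \emph{radial}. Writing $z=re^{i\theta}$, this produces the separated form
\begin{align*}
\varUpsilon_{m,s}^{\kappa,\rho}(z,\bz)=m!\,e^{is\theta}\,P^{(\kappa,\rho)}_{m}(2r^2-1),
\end{align*}
on which the whole argument rests.

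For orthogonality, I would compute $\scal{\varUpsilon_{m,s}^{\kappa,\rho},\varUpsilon_{m',s'}^{\kappa,\rho}}$ directly in polar coordinates, where $d\mu_{\kappa,\rho}=(1-r^2)^\kappa r^{2\rho+1}\,dr\,d\theta$. The angular integral $\int_0^{2\pi}e^{i(s-s')\theta}\,d\theta=2\pi\delta_{s,s'}$ separates the frequencies, and for $s=s'$ the substitution $x=2r^2-1$ turns the radial integral into the classical Jacobi orthogonality relation on $[-1,1]$ with weight $(1-x)^\kappa(1+x)^\rho$, producing $\delta_{m,m'}$ together with the explicit square norm (the integrability $\kappa,\rho>-1$ guaranteeing a finite weight). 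This is the same radial orthogonality already underlying Corollary \ref{corOrthnorm}.

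The substantive step is completeness, which I would organize as a Hilbertian decomposition. Expanding any $f\in L^{2,\kappa}_\rho(\D)$ in its angular Fourier series $f(re^{i\theta})=\sum_{s\in\Z}f_s(r)e^{is\theta}$ and invoking Parseval on the circle splits the norm as $\norm{f}^2=2\pi\sum_{s\in\Z}\int_0^1|f_s(r)|^2(1-r^2)^\kappa r^{2\rho+1}\,dr$, exhibiting $L^{2,\kappa}_\rho(\D)=\bigoplus_{s\in\Z}\mathcal{H}_s$, where $\mathcal{H}_s$ is the $e^{is\theta}$-isotypic component, isometric to the radial space $L^2\big((0,1),(1-r^2)^\kappa r^{2\rho+1}\,dr\big)$. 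For fixed $s$ the family $\{\varUpsilon_{m,s}^{\kappa,\rho}\}_{m\geq0}$ spans inside $\mathcal{H}_s$ exactly the image of the polynomials $P^{(\kappa,\rho)}_{m}(2r^2-1)$; under $x=2r^2-1$ this radial space is isometric (up to a positive constant) to $L^2\big((-1,1),(1-x)^\kappa(1+x)^\rho\,dx\big)$, in which the Jacobi polynomials $\{P^{(\kappa,\rho)}_{m}\}_{m\geq0}$ form a complete orthogonal basis. Hence each $\mathcal{H}_s$ is spanned, and summing the orthogonal decomposition over $s\in\Z$ gives completeness of $\{\varUpsilon_{m,s}^{\kappa,\rho}\}_{m\geq0,\,s\in\Z}$ in the whole space.

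The main obstacle is precisely this last assembly: one must justify that the angular Fourier decomposition is a genuine orthogonal Hilbert-space direct sum, so that density mode-by-mode forces global density, and that $x=2r^2-1$ is a unitary identification onto the Jacobi-weighted space carrying radial polynomials to polynomials in $x$. The completeness of the Jacobi system itself is classical—it follows from the Weierstrass theorem since $[-1,1]$ is compact and the weight is a finite measure, and $\deg P^{(\kappa,\rho)}_{m}=m$ makes them a graded basis of the polynomials—so the only genuine care required is in stitching the radial completeness back through the Fourier decomposition to recover completeness of the two-parameter family.
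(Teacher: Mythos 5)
Your proposal is correct and follows essentially the same route as the paper: the separated form $\varUpsilon_{m,s}^{\kappa,\rho}=m!\,e^{is\theta}P_m^{(\kappa,\rho)}(2|z|^2-1)$ via \eqref{Multz} and \eqref{ZJacobi}, angular Fourier modes, and the classical completeness of the Jacobi system on $[-1,1]$. The only difference is packaging: you set up the mode decomposition $L^{2,\kappa}_{\rho}(\D)=\bigoplus_{s\in\Z}\mathcal{H}_s$ up front and prove completeness mode-by-mode, whereas the paper takes $f$ in the orthogonal complement, passes to the Fourier coefficients $\hat f_s$, checks $\hat f_s^{\kappa,\rho}\in L^2([-1,1[,dt)$ by Cauchy--Schwarz and Fubini, and invokes the weighted Jacobi functions as an orthogonal basis of the unweighted $L^2$ — two presentations of the same argument.
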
 

\begin{proof}
The key observation is that for $z= \sqrt{(1+x)/2}\, e^{i\theta} $ with $x\in[-1,1[$ and $\theta\in [0,2\pi[$ we have
	$$ \varUpsilon_{m,s}^{\kappa,\rho}(z,\bz) = 
	\frac{z^s}{|z|^s} \mathcal{Z}_{m,m}^{\kappa,\rho} (z,\bz) = m! e^{i s\theta} P_m^{(\kappa,\rho)}(x) .$$
	Subsequently,  their orthogonality in $L^{2,\kappa}_{\rho}(\D)$ readily follows using the orthogonality of the Jacobi polynomials $P_m^{(\kappa,\rho)}$ in the Hilbert $L^2_{\kappa,\rho} $ of square integrable functions on $[-1,1[$ with respect to the measure 
	 $
	 (1-x)^{\kappa} (1+x)^{\rho} dx$. More exactly we have    
\begin{align*}
		\int_{D} \varUpsilon_{m,s}^{\kappa,\rho}(z,\bz) 
		\overline{\varUpsilon_{n,r}^{\kappa,\rho}(z,\bz)}  |z|^{2\rho}(1-|z|^2)^\kappa  d\lambda(z)
		& = \frac{m!n!\pi }{2^{\kappa+\rho+1}} \norm{ P_m^{(\kappa,\rho)}}_{L^2_{\kappa,\rho} }^2 \delta_{m,n} \delta_{s,r}.
	\end{align*}
For their completeness, let 
$f\in F^{\perp}$, the orthogonal of $F =Span\{\varUpsilon_{m,s}^{\kappa,\rho}; m=0,1,2,\cdots , s\in \Z \} $ in $L^{2,\kappa}_{\rho}(\D)$.
%
	Thus, by Proposition \ref{proppsiJac},
 the assumption that $f\in F^{\perp}$  becomes equivalent to 
	\begin{align*}
		\scal{f, \varUpsilon_{m,s}^{\kappa,\rho} }_{L^{2,\kappa}_{\rho}} 
		=	& \frac{m!}{2^{\kappa+\rho+2}}  \int_{-1}^{1}
		\left(1-x\right) ^{\kappa/2} \left(1+x\right) ^{\rho/2}  P_m^{(\kappa,\rho)}(x) \hat{f}^{\kappa,\rho}_{s}(x)  dx = 0  
	\end{align*}
for every integer $s$ and $m=0,1,2,\cdots$.  The involved 
 function is defined by 
$$ \hat{f}^{\kappa,\rho}_{s}(x) :=\left(1-x\right) ^{\kappa/2}\left(1+x\right) ^{\rho/2 } \hat{f}_{s}(x) ,$$ where
$\hat{f}_{s}(x) $
denotes the $s$-th Fourier coefficient of  the function   
$  f_x :=
\theta \longmapsto f(\sqrt{(1+x)/2}\, e^{i\theta} ) 
$
for every fixed $x\in [-1,1[$.
Clearly $ \hat{f}^{\kappa,\rho}_{s}$
belongs to $L^{2}( [ -1,1[;dt)$ since 
by means of the Cauchy-Schwartz inequality and the Fubini's theorem one gets 
\begin{align*}
	\int_{-1}^1 |\hat{f}^{\kappa,\rho}_{s}(x)|^2   dx	
	& \leq 2 \pi \int_{-1}^1 \left(1-x\right)^{\kappa}\left(1+x\right)^{\rho}   \left(\int_0^{2\pi} \left|    f\left( \sqrt{\frac{1+x}{2}} e^{i\theta} \right) \right|^2 d \theta\right)  dx 
	= 2^{\kappa+\rho+3}\pi  \norm{f}_{L^{2,\kappa}_{\rho}}^2.
\end{align*}
%
%
Therefore,  $\hat{f}^{\kappa,\rho}_{s}=0$ 
 a.e on $[ -1,1[$ for every $s \in \Z$ for the  functions $   \left(1-x\right) ^{ \kappa/2}  \left(1+x\right) ^{\rho/2} P_m^{(\kappa,\rho)} $,  $m=0,1,2,\cdots$, being an orthogonal basis of $L^{2}\left( [ -1,1[,dt\right)$.
This implies in particular that the Fourier transform of $f_x    \in L^{2}([ 0,2\pi[,d\theta)$ satisfies 
$\mathcal{F}(f_x) (\ell) = \hat{f}_{-s}(x) = 0$ for every $s\in \Z$ and every fixed $x\in [-1,1[\setminus N$, where
we have set  $N:= \cup_s \{ x \in [-1,1 [ ; \, \hat{f}_{s} (x)\neq 0 \}$. 
	This proves that the function $f_{x}=0$ a.e. on $[ 0,2\pi[$ for almost every $x \in [ -1,1[$. Therefore, $f$ 	is a vanishing function almost every $t \in  [ -1,1[$.
	This completes the proof.
\end{proof}

\begin{corollary}
	The Hilbert spaces $A^{\kappa,\rho}_s(\D) :=  \overline{Span\{  \bz^{-s/2}\mathcal{Z}_{m,m+s/2}^{\kappa,\rho-s/2}; m=0,1,2,\cdots  \}}^{L^{2,\kappa}_{\rho}}; \, \, s \in \Z$
	defines a Hilbertian orthogonal decomposition of $L^{2,\kappa}_{\rho}(\D)$. Namely, we have 
	$$L^{2,\kappa}_{\rho}(\D) = \bigoplus_{s\in \Z} A^{\kappa,\rho}_s(\D) .$$ 
\end{corollary}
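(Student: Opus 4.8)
The plan is to obtain this orthogonal decomposition as an essentially immediate consequence of Theorem \ref{Mth5}, which already establishes that the full family $\{\varUpsilon_{m,s}^{\kappa,\rho} : m\geq 0,\ s\in\Z\}$ is a complete orthogonal system in $L^{2,\kappa}_{\rho}(\D)$. The corollary is nothing but a regrouping of this single system according to the angular index $s$, so the work reduces to identifying the closed spans $A_s^{\kappa,\rho}(\D)$ with mutually orthogonal summands whose Hilbertian sum is dense.

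First I would record the orthogonality of the subspaces for distinct $s$. The inner product computed in the proof of Theorem \ref{Mth5},
$$\scal{\varUpsilon_{m,s}^{\kappa,\rho},\varUpsilon_{n,r}^{\kappa,\rho}}_{L^{2,\kappa}_{\rho}}=\frac{m!n!\pi}{2^{\kappa+\rho+1}}\norm{P_m^{(\kappa,\rho)}}_{L^2_{\kappa,\rho}}^2\,\delta_{m,n}\,\delta_{s,r},$$
carries the factor $\delta_{s,r}$, so for $s\neq r$ every generator $\varUpsilon_{m,s}^{\kappa,\rho}$ of $A_s^{\kappa,\rho}(\D)$ is orthogonal to every generator $\varUpsilon_{n,r}^{\kappa,\rho}$ of $A_r^{\kappa,\rho}(\D)$. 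By continuity of the inner product this orthogonality extends to the closed linear spans, giving $A_s^{\kappa,\rho}(\D)\perp A_r^{\kappa,\rho}(\D)$ whenever $s\neq r$.

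Second, I would invoke completeness to fill out the whole space. By definition each $A_s^{\kappa,\rho}(\D)$ is the closure of the linear span of $\{\varUpsilon_{m,s}^{\kappa,\rho}:m\geq 0\}$, so the algebraic sum $\bigoplus_{s\in\Z}A_s^{\kappa,\rho}(\D)$ contains the linear span of the entire family $\{\varUpsilon_{m,s}^{\kappa,\rho}\}$, which is dense in $L^{2,\kappa}_{\rho}(\D)$ by the completeness assertion of Theorem \ref{Mth5}. Since the summands are mutually orthogonal closed subspaces, the closure of their algebraic direct sum coincides with the Hilbertian direct sum $\bigoplus_{s\in\Z}A_s^{\kappa,\rho}(\D)$; density then forces this Hilbertian direct sum to be all of $L^{2,\kappa}_{\rho}(\D)$, which is exactly the claimed decomposition.

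I do not anticipate a genuine obstacle here, as Theorem \ref{Mth5} does all the analytic work. The only points deserving a word of care are the passage from orthogonality of the countable generating family to orthogonality of the closed subspaces, and the standard identification of the closure of a sum of pairwise orthogonal subspaces with their Hilbertian direct sum; both are routine consequences of the continuity of the scalar product \eqref{spbeta} together with the completeness statement, and so the proof is a short bookkeeping argument rather than a new computation.
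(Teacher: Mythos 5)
Your proposal is correct and matches the paper's route exactly: the paper states this corollary without a separate proof precisely because it is the immediate regrouping-by-$s$ consequence of the completeness and orthogonality established in Theorem \ref{Mth5}, which is what you spell out. Your two bookkeeping steps (orthogonality of closed spans via continuity of the inner product, and density of the algebraic sum forcing the Hilbertian sum to exhaust $L^{2,\kappa}_{\rho}(\D)$) are the standard and intended argument.
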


\begin{definition}
	The  closed subspace  $A^{\kappa,\rho}_s(\D)$  are called generalized (poly-meromorphic) Bergman  spaces of second kind.  
\end{definition}

\noindent{\bf Acknowledgement:}
The assistance of the members of  "Ahmed Intissar" and "Analysis, P.D.E. $\&$ Spectral Geometry" seminars is gratefully acknowledged.

\quad 

\noindent {\bf Data availability statement:}	All data generated or analyzed during this study are included in this article.

\quad 

\noindent {\bf Conflict of interest:}
The authors declare that they have no conflict of interest.
.

\end{document}